\newcommand{\arXiv}[1]{\href{http://arxiv.org/abs/#1}{arXiv:#1}}
\newcommand{\fref}[1]{\prettyref{#1}}
\newcommand{\mynewthm}[3][]{%
  \def\PARAM{#1}
  \ifx\PARAM\empty
  \newtheorem{#2}[thmnum]{#3}
  \else
  \newtheorem{#2}{#3}[#1]
  \fi
  \newtheorem*{#2*}{#3}%
  \newrefformat{#2}{#3~\ref{##1}}%
}
\newcommand{\ThmLabel}{Theorem}
\newcommand{\PrpLabel}{Proposition}
\newcommand{\LemLabel}{Lemma}
\newcommand{\FctLabel}{Fact}
\newcommand{\CorLabel}{Corollary}
\newcommand{\DfnLabel}{Definition}
\newcommand{\ConvLabel}{Convention}
\newcommand{\NtnLabel}{Notation}
\newcommand{\CstLabel}{Construction}
\newcommand{\ExmLabel}{Example}
\newcommand{\RmkLabel}{Remark}
\newcommand{\QstLabel}{Question}
\newcommand{\ThmLabel}{\iflanguage{french}{Théorème}{Theorem}}
\newcommand{\PrpLabel}{Proposition}
\newcommand{\LemLabel}{\iflanguage{french}{Lemme}{Lemma}}
\newcommand{\FctLabel}{\iflanguage{french}{Fait}{Fact}}
\newcommand{\CorLabel}{\iflanguage{french}{Corollaire}{Corollary}}
\newcommand{\DfnLabel}{\iflanguage{french}{Définition}{Definition}}
\newcommand{\ConvLabel}{Convention}
\newcommand{\NtnLabel}{Notation}
\newcommand{\CstLabel}{Construction}
\newcommand{\ExmLabel}{\iflanguage{french}{Exemple}{Example}}
\newcommand{\RmkLabel}{\iflanguage{french}{Remarque}{Remark}}
\newcommand{\QstLabel}{Question}
\theoremstyle{plain}
\theoremstyle{definition}
\theoremstyle{remark}
\newtheorem*{clm}{Claim}
\newenvironment{clmprf}{%
  \begin{proof}[Proof of claim]%
  }{\end{proof}}
\renewcommand{\today}{%
  \number\day\space
  \ifcase\month\or
  January\or February\or March\or April\or May\or June\or
  July\or August\or September\or October\or November\or December\fi
  \space \number\year}
\newcounter{cycprfcnt}
{\begin{list}{\PackageWarning{begnac}{Label required for cycprf}}%
  {%
    \setcounter{cycprfcnt}{1}
    \setlength{\itemindent}{0.5\leftmargin}%
    \setlength{\leftmargin}{0pt}%
  }%
}%
{\qedhere\end{list}}%
\def\indsym#1#2{%
  \setbox0=\hbox{$\m@th#1x$}%
  \kern\wd0%
  \hbox to 0pt{\hss$\m@th#1\mid$\hbox to 0pt{$\m@th#1^{#2}$\hss}\hss}%
  \lower.9\ht0\hbox to 0pt{\hss$\m@th#1\smile$\hss}%
  \kern\wd0}
\def\nindsym#1#2{%
  \setbox0=\hbox{$\m@th#1x$}%
  \kern\wd0%
  \hbox to 0pt{\hss$\m@th#1\not$\kern1.4\wd0\hss}
  \hbox to 0pt{\hss$\m@th#1\mid$\hbox to 0pt{$\m@th#1^{#2}$\hss}\hss}%
  \lower.9\ht0\hbox to 0pt{\hss$\m@th#1\smile$\hss}%
  \kern\wd0}
\def\dotminussym#1#2{%
  \setbox0=\hbox{$\m@th#1-$}%
  \kern.5\wd0%
  \hbox to 0pt{\hss\hbox{$\m@th#1-$}\hss}%
  \raise.6\ht0\hbox to 0pt{\hss$\m@th#1.$\hss}%
  \kern.5\wd0}
\newcommand{\dotminus}{\mathbin{\mathpalette\dotminussym{}}}
\renewcommand{\emptyset}{\varnothing}
\renewcommand{\setminus}{\smallsetminus}
\def\models{\vDash}
\newcommand{\rest}{{\restriction}}
\newcommand{\flim}{\mathop{\mathcal{F}\mathrm{lim}}}
\newcommand{\half}[1][1]{\hbox{$\frac{#1}{2}$}}
\DeclareMathOperator{\tp}{tp}
\DeclareMathOperator{\Th}{Th}
\DeclareMathOperator{\Mod}{Mod}
\DeclareMathOperator{\tS}{S}
\DeclareMathOperator{\id}{id}
\DeclareMathOperator{\Pert}{Pert}
\DeclareMathOperator{\Diag}{Diag}
\DeclareMathOperator{\sgn}{sgn}
\newcommand{\fB}{\mathfrak{B}}
\newcommand{\fC}{\mathfrak{C}}
\newcommand{\fp}{\mathfrak{p}}
\newcommand{\cA}{\mathcal{A}}
\newcommand{\cC}{\mathcal{C}}
\newcommand{\cL}{\mathcal{L}}
\newcommand{\cK}{\mathcal{K}}
\newcommand{\cN}{\mathcal{N}}
\newcommand{\cX}{\mathcal{X}}
\newcommand{\sA}{\mathscr{A}}
\newcommand{\sE}{\mathscr{E}}
\newcommand{\sU}{\mathscr{U}}
\newcommand{\bN}{\mathbb{N}}
\newcommand{\bQ}{\mathbb{Q}}
\newcommand{\bR}{\mathbb{R}}
\newcommand{\bZ}{\mathbb{Z}}
\DeclareMathOperator{\essrng}{ess\ rng}
\begin{document}

\title{Modular functionals and perturbations of Nakano spaces}

\author{Itaï \textsc{Ben Yaacov}}

\address{Itaï \textsc{Ben Yaacov} \\
  Université Claude Bernard -- Lyon 1 \\
  Institut Camille Jordan \\
  43 boulevard du 11 novembre 1918 \\
  69622 Villeurbanne Cedex \\
  France}

\urladdr{\url{http://math.univ-lyon1.fr/~begnac/}}

\thanks{Research initiated during the workshop ``Model theory of
  metric structures'', American Institute of Mathematics Research
  Conference Centre, 18 to 22 September 2006}
\thanks{Research supported by
  ANR chaire d'excellence junior THEMODMET (ANR-06-CEXC-007) and
  by Marie Curie research network ModNet}

\svnInfo $Id: Nakano.tex 828 2009-02-21 11:30:21Z begnac $
\thanks{\textit{Revision} {\svnInfoRevision} \textit{of} \today}

\subjclass[2000]{03C98,46E30}

\begin{abstract}
  We settle several questions regarding the model theory of Nakano
  spaces left open by the PhD thesis of Pedro Poitevin
  \cite{Poitevin:PhD}.

  We start by studying isometric Banach lattice embeddings of Nakano
  spaces, showing that in dimension two and above such embeddings have
  a particularly simple and rigid form.

  We use this to show show that in the Banach lattice language
  the modular functional is definable and that complete theories of
  atomless Nakano spaces are
  model complete.
  We also show that up to
  arbitrarily small perturbations of the exponent Nakano spaces are
  $\aleph_0$-categorical and $\aleph_0$-stable.
  In particular they are stable.
\end{abstract}

\maketitle

\section*{Introduction}

\emph{Nakano spaces} are a generalisation of $L_p$ function spaces in
which the exponent $p$ is allowed to vary as a measurable function of
the underlying measure space.
The PhD thesis of Pedro Poitevin \cite{Poitevin:PhD} studies Nakano
spaces as Banach lattices from a model theoretic standpoint.
More specifically, he viewed Nakano spaces as continuous metric
structures (in the sense of continuous logic, see
\cite{BenYaacov-Usvyatsov:CFO}) in the language of Banach lattices,
possibly augmented by a predicate symbol $\Theta$ for the modular
functional, showed that natural classes of such structures are
elementary in the sense of continuous first order logic, and studied
properties of their theories.

In the present paper we propose to answer a few questions left open by
Poitevin.
\begin{itemize}
\item First, Poitevin studies Nakano spaces in two natural languages:
  that of Banach lattices, and the same augmented with an additional
  predicate symbol for the modular functional.
  It is natural to ask whether these languages are truly distinct,
  i.e., whether adding the modular functional adds new structure.
\item Even if the naming of the modular functional does not add
  structure, it does give quantifier elimination in atomless Nakano
  spaces.
  While it is clear that without it quantifier elimination is
  impossible, it is natural to ask whether model completeness is.
\item Poitevin showed that the theory of atomless Nakano space where
  the exponent function is \emph{bounded away from one} is stable.
  What about the general case?
\item Similarly,
  if the exponent is constant, i.e., if we are
  dealing with classical atomless $L_p$ spaces, it is known (see
  \cite{BenYaacov-Berenstein-Henson:LpBanachLattices})
  that the theory of these spaces is
  $\aleph_0$-categorical and $\aleph_0$-stable.
  On the other hand, it is quite easy to verify
  complete theories of atomless Nakano spaces are non
  $\aleph_0$-categorical and non $\aleph_0$-stable once the
  essential range of the exponent is infinite.
  It is therefore natural to ask whether, up to small perturbations of
  the exponent, a complete theory of atomless Nakano spaces
  is $\aleph_0$-categorical and $\aleph_0$-stable.
  A positive answer would mean that the theory of atomless Nakano
  spaces is stable settling the previous item as well.
\end{itemize}

In this paper we answer all of these questions positively (where a
negative answer to the first question is considered positive).
It is organised as follows:

\fref{sec:FunctionalAnalysis} consist purely of functional analysis,
and requires no familiarity with model theory.
After a few general definitions we study mappings between vector
lattices of measurable functions and then more specifically between
Nakano spaces.
Our main result is:
\begin{thm*}
  Let
  $\theta\colon L_{p(\cdot)}(X,\fB,\mu)
  \hookrightarrow L_{q(\cdot)}(Y,\fC,\nu)$
  be a
  Banach lattice isometric embedding of Nakano spaces of dimension at
  least two.
  Then up to a measure density change on $Y$ and identification
  between subsets of $X$ and of $Y$ (and thus between measurable
  functions on $X$ and on $Y$) $\theta$ is merely an extension by zeros
  from $X$ to $Y \supseteq X$.
  In particular $p = q\rest_X$ and $\mu = \nu\rest_\fB$.
\end{thm*}
It follows that such embeddings respect the modular functional and
extend the essential range of the exponent function.

In \fref{sec:NakanoModelTheory} we expose the model theoretic setting
for the paper.
In particular, we quote the main results of
Poitevin's PhD thesis \cite{Poitevin:PhD}.

In \fref{sec:DefinableModular} we prove our main model theoretic results:

\begin{thm*}
  The modular functional is definable in every Nakano Banach
  lattice (i.e., naming it in the language does not add structure).
  Moreover, it is uniformly definable in the
  class of Nakano spaces of dimension at least two, and in fact both
  $\sup$-definable and $\inf$-definable there.
\end{thm*}

\begin{thm*}
  The theory of atomless Nakano
  spaces with a fixed essential range for the exponent function is
  model complete in the Banach lattice language.
\end{thm*}

In \fref{sec:PertExp} we study perturbations of the exponent function,
showing that small perturbations thereof yield small perturbations of
the structures.
Up to such perturbations the theory of atomless Nakano spaces is
$\aleph_0$-stable, and every completion thereof is $\aleph_0$-categorical.
In particular all Nakano space are stable.

\fref{apx:ContModTh} consist of the adaptation
to continuous logic of a few classical model theoretic results and
tools used in this paper.

Finally, \fref{apx:ModularApprox} contains some approximation results
for the modular functional which were used in earlier versions of this
paper to be superseded later by \fref{thm:EmbedFactor}, but which
nonetheless might be useful.

\section{Some functional analysis}
\label{sec:FunctionalAnalysis}

\subsection{Nakano spaces}

Let $(X,\fB,\mu)$ be an arbitrary measure space, and let
$L_0(X,\fB,\mu)$ be the space of all measurable functions
$f\colon X \to \bR$ up to equality a.e.
(Since we wish to consider function spaces as Banach lattices it will be
easier to consider the case of real-valued functions.)

Let $p\colon X \to [1,\infty)$ be an
essentially bounded measurable function.
We define the \emph{modular functional}
$\Theta_{p(\cdot)}\colon L_0(X,\fB,\mu) \to [0,\infty]$ by:
\begin{gather*}
  \Theta_{p(\cdot)}(f) = \int |f(x)|^{p(x)} d\mu.
\end{gather*}
We define the corresponding \emph{Nakano space} as:
\begin{gather*}
  L_{p(\cdot)}(X,\fB,\mu)
  = \{f \in L_0(X,\fB,\mu)\colon \Theta_{p(\cdot)}(f) < \infty\}.
\end{gather*}
If $f \in L_{p(\cdot)}(X,\fB,\mu)$ then there exists a
unique number $c \geq 0$ such that $\Theta_{p(\cdot)}(f/c) = 1$, and we
define $\|f\| = \|f\|_{p(\cdot)} = c$.
This is a norm, making $L_{p(\cdot)}(X,\fB,\mu)$ a Banach space.
With the point-wise minimum and maximum operations it is a Banach
lattice.

\begin{rmk*}
  In the literature $\Theta_{p(\cdot)}$ is usually merely referred to as the
  \emph{modular}.
  Being particularly sensitive regarding parts of speech
  we shall nonetheless refer to it throughout as the modular
  \emph{functional}.
\end{rmk*}

\subsection{Strictly localisable spaces}

In this paper we shall consider the class of Nakano spaces from a
model-theoretic point of view.
This means we shall have to admit arbitrarily large Nakano spaces
(e.g., $\kappa$-saturated for arbitrarily big $\kappa$) and therefore
arbitrarily large measure spaces.
In particular, we cannot restrict our attention to $\sigma$-finite measure
spaces.
In order to avoid pathologies which may arise with arbitrary measure
spaces we shall require a weaker assumption.
Recall from \cite{Fremlin:MeasureTheoryVol2}:
\begin{dfn}
  A measure space $(X,\fB,\mu)$ is \emph{strictly localisable}
  if it can be expressed as a disjoint union of measure spaces of
  finite measure, i.e.,
  if $X$ admits a partition as $\bigcup_{i \in I} X_i$ such that:
  \begin{enumerate}
  \item For all $i \in I$: $X_i \in \fB$ and $\mu(X_i) < \infty$.
  \item For all $A \subseteq X$:
    $A \in \fB$ if and only if $A \cap X_i \in \fB$ for
    all $i \in I$, in which case $\mu(A) = \sum \mu(A\cap X_i)$.
  \end{enumerate}
  In this case the family $\{X_i\}_{i \in I}$ \emph{witnesses} that
  $(X,\fB,\mu)$ is strictly localisable.
\end{dfn}

For example every $\sigma$-finite measure space is strictly localisable.
On the other hand, if $(X,\fB,\mu)$ is an arbitrary measure space we can
find a maximal family $\cX = \{X_i\}_{i \in I} \subseteq \fB$
of almost disjoint
sets with $0 < \mu(X_i) < \infty$.
Let
$(X',\fB',\mu')
= \coprod_{i\in I} (X_i,\fB\rest_{X_i},\mu\rest_{X_i})$,
where the disjoint union of measure spaces is defined precisely so
that the result is strictly localisable.
We also have an
obvious mapping $\theta\colon L_0(X,\fB,\mu) \to L_0(X',\fB',\mu')$.
This does not lose any information that interests us:
in particular, $\theta$ restricts to an isometric isomorphism of Nakano
spaces
$\theta\colon L_{p(\cdot)}(X,\fB,\mu)
\to L_{\theta p(\cdot)}(X',\fB',\mu')$.

We may therefore allow ourselves:
\begin{conv}
  \label{conv:StrLoc}
  In this paper every measure space
  is assumed to be strictly localisable.
\end{conv}

Let us state a few very easy facts concerning strictly localisable
measure spaces.
The following is immediate:
\begin{fct}
  \label{fct:WitnessRefinement}
  Let $\cX = \{X_i\}_{i \in I}$ witness that $(X,\fB,\mu)$ is strictly
  localisable.
  If $\cX' = \{X'_j\}_{j \in J} \subseteq \fB$
  is another partition of $X$ refining
  $\cX$, splitting each $X_i$ into at most countably many subsets,
  then $\cX'$ is a witness as well.
\end{fct}

The Radon-Nikodým Theorem is classically stated for finite measure
spaces, with various occurrences in the literature
in which the finiteness requirement on the ambient space
is relaxed.
See for example \cite[Corollaries~232F,G]{Fremlin:MeasureTheoryVol2}.
These are corollaries to
\cite[Theorem~232E]{Fremlin:MeasureTheoryVol2}, which allows an
arbitrary ambient measure space at the cost of an additional
concept, that of a \emph{truly continuous} functional.
Another generalisation appears in
\cite[Theorem~327D]{Fremlin:MeasureTheoryVol3},
but again the smaller measure is assumed there to be finite.
We shall require a different generalisation of
the Radon-Nikodým Theorem in which
all finiteness requirements are replaced with
strict localisability.

Let $(X,\fB)$ be a measurable space and let $\mu$ and $\nu$ be two
measures on $(X,\fB)$.
Assume also that $\nu(X) < \infty$.
Then
$\nu$ is said to be \emph{absolutely continuous} with
respect to $\mu$, in symbols $\nu \ll \mu$,
if for every $\varepsilon > 0$ there exists $\delta > 0$
such that
$\mu(A) < \delta \Longrightarrow \nu(A) < \varepsilon$
for every $A \in \fB$.
Equivalently, if
$\mu(A) = 0 \Longrightarrow \nu(A) = 0$
for every $A \in \fB$.

In the general case, i.e., when $\nu$ is not required to be finite,
we shall use the notation $\nu \ll \mu$ to mean that
$\mu$ and $\nu$ are both strictly localisable with a common
witness $\{X_i\}_{i \in I}$, and that
$\nu$ is absolutely continuous with respect to $\mu$ on each
$X_i$.
It follows directly from this definition that if
$\nu \ll \mu$ and $\mu(A) = 0$ for some $A \in \fB$ then
$\nu(A) = 0$ as well, so $\nu$ is absolutely continuous with respect
to $\mu$ on every set of finite $\nu$-measure.
This has two important consequences.
First, if $\nu \ll \mu$ then every common witness of strict
localisability for both $\mu$ and $\nu$ also witnesses that
$\nu \ll \mu$.
Second, in case $\nu(X)  < \infty$, the definition
of $\nu \ll \mu$ given in this paragraph coincides with
the classical definition appearing in the previous paragraph.

\begin{fct}
  \label{fct:RadonNikodym}
  Let $(X,\fB,\mu)$ be a measure space
  (strictly localisable, by our convention)
  and let $L_0^+(X,\fB,\mu)$ denote the set of positive functions
  in $L_0(X,\fB,\mu)$.
  \begin{enumerate}
  \item Let $\zeta \in L_0^+(X,\fB,\mu)$,
    and for $A \in \fB$ define
    $\nu_\zeta(A) = \int \zeta\, d\mu$.
    Then $\nu_\zeta$ is a measure and $\nu_\zeta \ll \mu$.
  \item Conversely, every measure $\nu \ll \mu$ on $(X,\fB)$
    is of the form $\nu = \nu_\zeta$ for a unique
    (up to equality $\mu$-a.e.)\ $\zeta \in L_0^+(X,\fB,\mu)$,
    and we write $\zeta = \frac{d\nu}{d\mu}$,
    the \emph{Radon-Nikodým derivative} of $\nu$ with respect to
    $\mu$.
    In this case we also have
    $\int f\, d\nu = \int f\frac{d\nu}{d\mu}\, d\mu$
    for every $f \in L_0^+(X,\fB,\mu)$.
  \end{enumerate}
  In particular, we obtain a bijection between
  $\{\nu\colon \nu \ll \mu\}$ and
  $L_0^+(X,\fB,\mu)$.
\end{fct}
\begin{proof}
  For the first item, let $\{X_i\}_{i \in I}$ witness that
  $\mu$ is strictly localisable.
  We may assume that in addition $\zeta$ is bounded on each
  $X_i$, for if not, we may split each $X_i$ into
  $X_{i,n} = \{x \in X_i\colon n \leq \zeta(x) < n+1\}$
  for $n \in \bN$.
  Then $\{X_i\}_{i \in I}$ also witnesses that 
  $\nu_\zeta$ is strictly localisable and it is clear that
  $\nu_\zeta \ll \mu$.

  For the converse,
  let $\{X_i\}_{i \in I}$ witness that $\nu \ll \mu$.
  We may apply the classical Radon-Nikodým theorem on each
  $X_i$, obtaining a measurable function
  $\zeta_i\colon X_i \to \bR^+$ for all $i \in I$,
  and define $\zeta\colon X \to \bR^+$ so that
  $\zeta\rest_{X_i} = \zeta_i$.
  Then $\zeta$ is measurable and
  \begin{gather*}
    \int f\,d\nu
    = \sum \int_{X_i} f\, d\nu
    = \sum \int_{X_i} f\zeta_i\, d\mu
    = \int f\zeta\, d\mu
  \end{gather*}
  for $f \in L_0^+(X,\fB,\mu)$.
  In particular
  $\nu(A) = \int_A \zeta d\mu$
  for all $A \in \fB$,
  which determines $\zeta$ up to equality $\mu$-a.e.
\end{proof}

Let us say that two measures $\mu$ and $\nu$ on $(X,\fB)$
are \emph{equivalent} if $\mu \ll \nu$ and $\nu \ll \mu$.
In this case each is obtained from the
other by a mere \emph{density change} and
the corresponding Nakano spaces are naturally isomorphic.
\begin{fct}
  \label{fct:DenseChange}
  Let $\mu$ and $\nu$ be two equivalent measures on $(X,\fB)$,
  and let  $p\colon X \to [1,r]$ be measurable.
  Let $(N,\Theta) =  L_{p(\cdot)}(X,\fB,\mu)$ and
  $(N',\Theta') = L_{p(\cdot)}(X,\fB,\nu)$
  be the corresponding Nakano spaces with their modular functionals.
  For $f \in N$ define $D_{\mu,\nu}f = (\frac{d\mu}{d\nu})^{1/p}f$.
  Then $D_{\mu,\nu}f \in N'$ and
  $D_{\mu,\nu}\colon (N,\Theta) \simeq (N',\Theta')$
  is an (isometric) isomorphism.
\end{fct}
\begin{proof}
  One calculates:
  \begin{align*}
    \Theta'(D_{\mu,\nu}f)
    & = \int \left( (\hbox{$\frac{d\mu}{d\nu}$})^{1/p}|f| \right)^p\, d\nu \\
    & = \int |f|^p \hbox{$\frac{d\mu}{d\nu}$}\,d\nu \\
    & = \int |f|^p\, d\mu = \Theta(f).
  \end{align*}
  It follows that $f \in N \Longrightarrow D_{\mu,\nu} f \in N'$.
  In addition to $\Theta$, $D_{\mu,\nu}$ clearly also
  respects the linear and lattice structures, and therefore the norm,
  and admits an inverse $D_{\nu,\mu}$.
\end{proof}

\subsection{Mappings between function space lattices}

For the following results we shall be considering two measure spaces
$(X,\fB,\mu)$ and $(Y,\fC,\nu)$, as well as a partial mapping
$\theta\colon L_0(X,\fB,\mu) \dashrightarrow L_0(Y,\fC,\nu)$.
Its domain $L \subseteq L_0(X,\fB,\mu)$ is a vector subspace
which contains all characteristic functions of finite measure sets.
For example, $L$ could be a Nakano space $L_{p(\cdot)}(X,\fB,\mu)$
or just the space of simple functions on $X$ with finite measure
support.
Assuming that $\theta$ sends characteristic functions to
characteristic functions, we shall allow ourselves the following
abuse of notation:
if $A \in \fB$ has finite measure and $\theta(\chi_A) = \chi_B$,
$B \in \fC$ then we write $\theta A = B$
(even though this is only defined up to null measure).
In particular, instead of writing $\theta(\chi_A)$ we write
$\chi_{\theta A}$.

\begin{lem}
  \label{lem:SetMorphism}
  Let $L \subseteq L_0(X,\fB,\mu)$ be a vector subspace
  which contains all characteristic functions of finite measure sets
  and let
  $\theta\colon L \to L_0(Y,\fC,\nu)$
  be a linear mapping respecting point-wise
  countable suprema when those exist
  in $L$, and which in addition sends characteristic functions to
  characteristic functions.

  Then $\theta$ extends to a unique
  vector lattice homomorphism
  $\hat \theta\colon L_0(X,\fB,\mu) \to L_0(Y,\fC,\nu)$ which respects countable
  suprema.
  Moreover, for every Borel function
  $\varphi\colon \bR^n \to \bR$ which fixes zero (i.e., which sends
  $0 \in \bR^n$ to $0 \in \bR$) and every tuple $\bar f \in L_0(X,\fB,\mu)$ we
  have
  $\hat \theta(\varphi \circ \bar f)
  = \varphi \circ (\theta \bar f)$.
\end{lem}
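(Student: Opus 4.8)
The plan is to route everything through the action of $\theta$ on sets and then to rebuild $\hat\theta$ from level sets. First I would analyse the set map $A\mapsto\theta A$ on finite-measure sets (defined by $\theta\chi_A=\chi_{\theta A}$). Linearity together with the fact that $\theta$ is characteristic-function-valued forces $\theta(A\sqcup A')=\theta A\sqcup\theta A'$ for disjoint $A,A'$: indeed $\chi_{\theta A}+\chi_{\theta A'}=\theta\chi_{A\sqcup A'}$ is again a characteristic function, so the images are disjoint. Positivity of $\theta$ applied to $\chi_{A'}-\chi_A\ge 0$ gives monotonicity, and preservation of countable suprema gives preservation of countable increasing unions that stay of finite measure. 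Hence $A\mapsto\theta A$ is a Boolean homomorphism of the ring of finite-measure sets into the measure algebra of $Y$ that preserves countable disjoint unions, and one checks it preserves relative complements and intersections on the same footing. I would then extend it to all of $\fB$ using \fref{conv:StrLoc}: fixing a witness $\{X_i\}_{i\in I}$ for $X$, set $\theta A=\bigsqcup_i\theta(A\cap X_i)$. The one delicate point is measurability of this possibly uncountable disjoint union; it is resolved by noting that within each finite-measure piece $Y_j$ of a witness for $Y$ the disjoint sets $\theta X_i\cap Y_j$ have total measure at most $\nu(Y_j)<\infty$, so only countably many are non-null. Independence of the witness follows from \fref{fct:WitnessRefinement}, again with only countably many pieces relevant at a time.

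With the set map in hand I would define $\hat\theta$ by level sets: for $f\in L_0^+(X,\fB,\mu)$ let $\hat\theta f(y)=\sup\{t\in\bQ_{\ge0}:y\in\theta\{f>t\}\}$, so that $\{\hat\theta f>t\}=\theta\{f>t\}$ for every $t\ge 0$, and extend to all of $L_0$ by $\hat\theta f=\hat\theta(f^+)-\hat\theta(f^-)$, the two summands having disjoint supports. This is manifestly measurable, and finite a.e.\ because $\theta$ carries the decreasing intersection $\bigcap_t(\{f>t\}\cap X_i)=\emptyset$ to $\emptyset$. Monotonicity, preservation of $\vee$ and $\wedge$, and preservation of countable suprema are all immediate from the corresponding properties of the set map read off level sets. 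Additivity is the only lattice-algebraic identity not visible at the level of level sets, and I would obtain it by checking it first on finite-measure simple functions (computing over a common finite partition, where $\hat\theta$ is visibly the prescribed linear formula) and then passing to increasing limits. The same approximation shows $\hat\theta$ agrees with $\theta$ on $L$: in the cases of interest $L$ is solid with $\mu\{f>t\}<\infty$ for $t>0$ (for a Nakano space this follows from essential boundedness of the exponent; for finite-support simple functions it is trivial), so each $f\in L$ is the increasing pointwise supremum of finite-measure simple functions lying in $L$, on which $\hat\theta$ and $\theta$ coincide, and sup-preservation of $\theta$ forces $\hat\theta f=\theta f$.

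For the functional calculus I would enlarge the admissible class of $\varphi$ in two stages. For continuous $\varphi\colon\bR^n\to\bR$ with $\varphi(0)=0$, the identity $\hat\theta(\varphi\circ\bar f)=\varphi\circ(\hat\theta\bar f)$ holds because such $\varphi$ is a uniform-on-compacts limit of piecewise-linear maps fixing $0$, each built from the linear and lattice operations that $\hat\theta$ preserves; the passage to the limit uses that $\hat\theta$ respects countable suprema and, by linearity on order-bounded sequences, countable infima. To reach all Borel $\varphi$ I would run a functional monotone class argument: the family of bounded Borel $\varphi$ fixing $0$ for which the identity holds is a vector space, contains all continuous functions fixing $0$, and is closed under bounded monotone sequential limits (since then $\varphi_m\circ\bar f$ converges monotonically and $\hat\theta$ passes through the limit). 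As the coordinate functions $x\mapsto x_k$ themselves fix $0$, the continuous functions fixing $0$ generate the full Borel $\sigma$-algebra of $\bR^n$, so this family exhausts the bounded Borel functions fixing $0$; unbounded $\varphi$ are recovered by truncation. The mild nuisance is that constants are excluded (they do not fix $0$, and indeed $\hat\theta\chi_X\ne\chi_Y$ in general); I would handle this by working away from the origin, where local bump functions vanishing at $0$ supply the constants needed to drive the theorem.

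Finally, uniqueness. Any vector lattice homomorphism preserving countable suprema is forced on finite-measure simple functions by linearity and its prescribed values on characteristic functions, hence on every finite-measure-supported function (via its finite-measure level sets). Writing $f=f\chi_{X_i}+f\chi_{X\setminus X_i}$ as a disjoint sum, the image $\hat\theta f$ restricted to $\theta X_i$ equals the already-determined $\hat\theta(f\chi_{X_i})$, so $\hat\theta f$ is pinned down on $\bigsqcup_i\theta X_i$. The substance of uniqueness in the non-$\sigma$-finite case is that countable-supremum preservation alone forbids any extra ``mass at infinity'' off $\bigsqcup_i\theta X_i$: an offending positive component would survive every countable exhaustion yet be killed by $\sigma$-order-continuity, the point being that an ultrafilter-type functional detecting uncountable support cannot respect countable suprema. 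I expect this last step—that countable, rather than full order, continuity already excludes such pathologies—to be the main obstacle, and it is exactly here that strict localisability (finiteness of each $Y_j$, hence countability of the relevant index set) does the real work.
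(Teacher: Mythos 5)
Your construction of $\hat\theta$ --- extend the set map $A\mapsto\theta A$ to all of $\fB$ using strict localisability, then define $\hat\theta f$ through its level sets so that $\{\hat\theta f>t\}=\theta\{f>t\}$ --- is sound and is essentially equivalent to the paper's, which instead sets $\hat\theta f=\bigvee_k\theta f^{(k)}$ for explicit simple approximants $f^{(k)}\in L$ and derives the level-set identity afterwards; the paper's variant has the small advantage that the approximants already lie in $L$, so agreement with $\theta$ on $L$ and uniqueness (in the finite-measure case) come for free. The genuine gap is in the first stage of your functional calculus. The class of $\varphi$ for which ``$\hat\theta$ is a vector lattice homomorphism'' directly yields $\hat\theta(\varphi\circ\bar f)=\varphi\circ(\hat\theta\bar f)$ is the lattice-linear span of the coordinate functions, and since $+$, scalar multiplication, $\vee$ and $\wedge$ all preserve positive homogeneity, that span consists only of \emph{positively homogeneous} piecewise-linear maps. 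These are not dense, uniformly on compacts, among continuous functions fixing $0$: already in one variable $\inf_a\sup_{x\in[0,1]}|x^2-ax|=3-2\sqrt2>0$. So the claim that piecewise-linear maps fixing $0$ are ``each built from the linear and lattice operations'' is false; general piecewise-affine pieces require constants, which is precisely the issue you relegate to a parenthetical ``mild nuisance'' at the end. The remark about local bump functions is where the real content lies, and it is not a proof as written.

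The repair uses only material you already have. By your very definition, $\{\hat\theta f>t\}=\theta\{f>t\}$; combined with preservation of countable unions, finite intersections and relative complements by the set map, this gives $\theta\{\bar f\in A\}=\{\hat\theta\bar f\in A\}$ for every Borel $A\subseteq(0,\infty)^n$ by climbing the Borel hierarchy from products of rays. For Borel $\varphi\geq0$ fixing $0$ and $\bar f\geq0$ one then reads off $\{\varphi\circ(\hat\theta\bar f)>t\}=\{\hat\theta\bar f\in A_t\}=\theta\{\bar f\in A_t\}=\{\hat\theta(\varphi\circ\bar f)>t\}$ with $A_t=\varphi^{-1}(t,\infty)\cap(0,\infty)^n$, and general $\varphi$, $\bar f$ follow by sign decomposition and linearity. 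This is exactly the paper's route, and it makes both the continuous-approximation stage and the monotone-class argument unnecessary (the latter is also delicate on its own terms, since your generating class contains no constants and the standard functional monotone class theorem does not apply verbatim). Finally, on uniqueness: you correctly identify that the danger is mass supported off $\bigsqcup_i\theta X_i$, but neither ``killed by $\sigma$-order-continuity'' nor strict localisability of $Y$ is actually shown to exclude a countably-sup-preserving lattice homomorphism living there; what is genuinely forced is the value of $\hat\theta f$ whenever $f$ is a countable supremum of differences of elements of $L$, which is all that the paper uses.
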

\begin{proof}
  Let us write $L_0$ for $L_0(X,\fB,\mu)$, and let
  $L_0^+$ be its positive cone.

  Let us first consider the case where $\mu(X),\nu(Y) < \infty$.
  In this case $L$ contains all simple measurable
  functions.
  For $f \in L_0^+$ and $0 < k \in \bN$
  define $f^{(k)}(x) = \frac{\lceil kf(x) \rceil}{k+1} \wedge k$,
  where $\lceil r \rceil$ denotes the least integer greater than $r$.
  Thus $f^{(k)} \nearrow f$ point-wise and $f^{(k)} \in L$ for all
  $k$.
  We then have no choice but to define $\hat \theta$ as follows:
  \begin{align*}
    & \hat \theta f
    = \hat \theta\left( \bigvee_{k\in \bN} f^{(k)} \right)
    = \bigvee_{k\in \bN} \theta f^{(k)}
    && \text{for } f \in L_0^+, \\
    & \hat \theta f
    = \hat \theta(f^+ - f^-)
    = \hat \theta f^+ - \hat \theta f^-
    &&  \text{for general } f \in L_0.
  \end{align*}
  We now need to make sure this verifies all the requirements.

  First of all we need to check that if $f \in L_0^+$ then
  $\hat \theta f = \bigvee_{k\in \bN} \theta f^{(k)}$
  exists, i.e., that it is finite a.e.
  Let
  $A_k = \{f \geq k\} = \{x \in X\colon f(x)\geq k\}$.
  Then the sequence $\{\chi_{A_k}\}$ decreases to zero,
  whereby $\{\chi_{\theta A_k}\}$ decrease to zero as well.
  We have
  $f^{(k+m)} \leq k + m\chi_{A_k}$ whereby
  $\theta f^{(k+m)} \leq k + m\chi_{\theta A_k}$,
  so $\theta f^{(k+m)} \leq k$ outside $\theta A_k$,
  for all $m$.
  Thus $\hat \theta f \leq k$  outside $\theta A_k$,
  and we can conclude that $\hat \theta f \in L_0(Y,\fC,\nu)$.
  Since $\theta$ respects countable suprema, $\hat \theta$
  agrees with $\theta$ on $L^+$.

  We claim that $\hat \theta$ respects countable suprema
  on $L_0^+$.
  Indeed, assume that $\bigvee_{m\in\bN} f_m$ exists for
  $f_m \in L_0^+$.
  Notice that in general
  $\bigvee_m \lceil a_m \rceil =
  \left\lceil \bigvee_m a_m \right\rceil$, whereby
  \begin{gather*}
    \hat \theta\left( \bigvee_{m\in\bN} f_m \right)
    = \bigvee_{k\in\bN} \theta\left( \left(
        \bigvee_{m\in\bN} f_m
      \right)^{(k)} \right)
    = \bigvee_{k\in\bN} \theta\left(
        \bigvee_{m\in\bN} f_m^{(k)}
      \right)
    = \bigvee_{m\in\bN,k\in\bN} \theta(f_m^{(k)})
    = \bigvee_{m \in \bN} \hat\theta(f_m).
  \end{gather*}
  If $f = \sum_{m \in \bN} f_m$ where $f_m \in L_0^+$, $f_m\wedge f_{m'} = 0$
  for $m \neq m'$ then $\theta(f_m) \wedge \theta(f_{m'}) = 0$ as well
  and
  $\hat \theta(f)
  = \hat\theta(\bigvee_mf_m)
  = \bigvee_m\hat\theta(f_m)
  = \sum_m\hat\theta(f_m)$.

  Next we claim that if $A \subseteq (0,\infty)^n$ is a Borel set and
  $\bar f \in (L_0^+)^n$ then
  $\theta\{\bar f(x) \in A\} = \{\hat \theta\bar f(y) \in A\}$.
  Indeed, for a single $f$ we have
  $\hat\theta f
  = \hat\theta(\bigvee_{t\in\bQ^+}t\chi_{\{f>t\}}) 
  = \bigvee_{t\in\bQ^+}t\chi_{\theta\{f>t\}}$, whereby
  $\{\hat\theta f>t\} = \theta\{f>t\}$.
  Our claim follows for the case
  $A = (t_0,\infty) \times \ldots \times (t_{n-1},\infty)$.
  On the other hand we have
  \begin{gather*}
    \theta\left\{ \bar f(x) \in \bigcup_m A_m \right\}
    = \bigcup_m \theta\{\bar f(x) \in A_m\}, \\
    \left\{ \hat \theta\bar f(y) \in \bigcup_m A_m \right\}
    = \bigcup_m \{\hat \theta\bar f(y) \in A_m\}, \\
    \theta\{\bar f(x) \in (0,\infty)^n \setminus A\}
    = \theta \{\bar f >0 \} \setminus \theta\{\bar f(x) \in A\}, \\
    \{\hat \theta\bar f(y) \in (0,\infty)^n \setminus A\}
    = \theta \{\bar f >0 \} \setminus \{\hat \theta\bar f(y) \in A\}.
  \end{gather*}
  We may thus climb up the Borel hierarchy and prove the claim
  for all Borel $A$.

  Assume now that $\bar f(x) \in (0,\infty)^n\cup\{0\}$
  for all $x \in X$ and that $\varphi \geq 0$.
  Letting $A_t= \{x\in(0,\infty)^n\colon \varphi(x) > t\}$:
  \begin{gather*}
    \theta\{\varphi \circ \bar f > t\}
    =  \theta\{\bar f \in A_t\}
    =  \{\hat\theta\bar f \in A_t\}
    =  \{\varphi \circ (\hat\theta\bar f) > t\},
  \end{gather*}
  whereby
  $\hat \theta(\varphi \circ \bar f)
  = \varphi \circ (\hat\theta \bar f)$.
  For general $\bar f$,
  let $S = \{1,0,-1\}^n \setminus \{0\}$, and for $s \in S$ let
  $A_s = \{x\in X\colon \sgn (\bar f) = s\}$.
  On each $A_s$ we may drop those $f_i$'s which are constantly zero and
  replace those which are negative with their absolute value, making
  the necessary modifications to $\varphi$, obtaining by the previous
  argument
  \begin{gather*}
    \hat \theta(\varphi \circ (\chi_{A_s}\bar f))
    = \varphi \circ (\hat \theta(\chi_{A_s}\bar f)),
    \intertext{whereby:}
    \hat\theta(\varphi \circ \bar f)
    = \sum_{s \in S} \hat\theta(\varphi \circ (\chi_{A_s}\bar f))
    = \sum_{s \in S} \varphi \circ (\hat\theta(\chi_{A_s}\bar f))
    = \varphi \circ (\hat\theta \bar f)
  \end{gather*}
  Finally, for general $\varphi$ we can split it to the positive and negative
  part and then put them back together by linearity.
  Among other things, this holds when $\varphi$
  is $+$, $\vee$, $\wedge$, or multiplication by scalar.
  Thus $\hat \theta$ is a vector lattice homomorphism.
  It follows that $\hat \theta$ agrees with $\theta$ on all of $L$.
  This concludes the case where both $X$ and $Y$ have finite total
  measure.

  Now let us consider the case where $X$ is an arbitrary measure
  space.
  Let $\{X_i\}_{i\in I} \subseteq \fB$ be a maximal family of almost
  disjoint sets of finite non zero measure such that in addition
  $\theta(\chi_{X_i}) \neq 0$.
  Since $\nu(Y)$ is assumed finite such a family must be at most
  countable, so we can write it as $\{X_k\}_{k\in\bN}$.
  Let $X' = \bigcup X_k$.
  Then for every $f \in L$ we have
  $\theta(f) = \theta(f\chi_{X'}) = \sum_k \theta(f\chi_{X_k})$
  (verify first for $f\geq0$ and then extend by linearity),
  so we may restrict to each
  $X_k$, reducing to the case already considered, then checking that
  $\hat\theta(f) = \sum_k \hat\theta(f\chi_{X_k})$ works.

  Finally, if $(Y,\fC,\nu)$ is merely strictly localisable then
  let this be witnessed by $\{Y_i\}_{i \in I}$.
  Then we can first extend
  $\theta_i = \chi_{Y_i}\theta \colon L \to (Y_i,\fC\rest_{Y_i},\nu\rest_{Y_i})$
  to $\hat \theta_i$ and then obtain $\hat\theta$ by gluing.
\end{proof}

\begin{lem}
  \label{lem:SetMorphismIntegral}
  Continue with previous assumptions, and add that
  if $\mu(A) < \infty$ then $\nu(\theta A) = \mu(A)$.
  Then for every function
  $f \in L_1(X,\fB,\mu)$: $\int f\, d\mu = \int \hat\theta f\,d\nu$.
\end{lem}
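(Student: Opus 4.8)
The plan is to verify the identity on successively larger classes of functions, starting from the generators of $L_1$ and propagating it using the structural properties of $\hat\theta$ recorded in \fref{lem:SetMorphism}. First I would treat characteristic functions of finite measure sets. If $A \in \fB$ with $\mu(A) < \infty$, then $\hat\theta\chi_A = \theta\chi_A = \chi_{\theta A}$, so that $\int \hat\theta\chi_A\,d\nu = \nu(\theta A)$, which by the added hypothesis equals $\mu(A) = \int \chi_A\,d\mu$. Since $\hat\theta$ is linear (indeed a vector lattice homomorphism) and the integral is linear, the identity passes at once to every simple function supported on a set of finite measure; such functions lie in $L$ and in $L_1(X,\fB,\mu)$, so the base case is legitimate.

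Next I would pass to an arbitrary positive $f \in L_1(X,\fB,\mu)$. Integrability forces each level set $\{f \geq c\}$ $(c>0)$ to have finite measure, so the standard increasing dyadic approximation $s_n \nearrow f$ can be arranged with every $s_n$ a simple function of finite-measure support, i.e.\ a function to which the previous step applies. Because $\hat\theta$ respects countable suprema by \fref{lem:SetMorphism} and preserves positivity, we get $\hat\theta s_n \nearrow \hat\theta f$ pointwise $\nu$-a.e. Two applications of the monotone convergence theorem then yield
\[
  \int \hat\theta f\,d\nu = \lim_n \int \hat\theta s_n\,d\nu = \lim_n \int s_n\,d\mu = \int f\,d\mu,
\]
which also shows, as a byproduct, that $\hat\theta f \in L_1(Y,\fC,\nu)$. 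Finally, for general $f \in L_1(X,\fB,\mu)$ I would write $f = f^+ - f^-$ and invoke linearity of both $\hat\theta$ and the integral to conclude.

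I do not anticipate a genuine obstacle; the argument is essentially a template ``prove it on $\chi_A$, extend by linearity, extend by monotone convergence.'' The only point requiring care is the interchange of $\hat\theta$ with the increasing limit, and this is precisely the ``respects countable suprema'' clause of \fref{lem:SetMorphism}, used together with the observation that the approximants $s_n$ can be chosen with finite-measure support so that the base case genuinely applies to them. Everything else is routine linearity and monotone convergence.
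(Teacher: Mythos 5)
Your proposal is correct and follows exactly the paper's route: the paper's proof is the one-line remark that the identity holds for characteristic functions of finite measure sets by hypothesis, then extends to simple positive functions, positive functions, and general functions. You have simply filled in the standard details (linearity, finite-measure supports of the dyadic approximants, compatibility of $\hat\theta$ with increasing limits via the countable-suprema clause of \fref{lem:SetMorphism}, and monotone convergence), all of which are the intended justifications.
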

\begin{proof}
  This holds by assumption for characteristic functions of finite
  measure sets, from which we deduce it for simple positive functions,
  positive functions and finally general functions.
\end{proof}

\subsection{Embeddings of Nakano Banach lattices}

We now prove the main functional analysis results of this paper.

\begin{lem}
  \label{lem:NakanoEmbedSets}
  Let $N = L_{p(\cdot)}(X,\fB,\mu)$ and $N' = L_{q(\cdot)}(Y,\fC,\nu)$ be two
  Nakano spaces,
  and let $\theta\colon N \to N'$ be an isometric
  embedding of Banach lattices which sends
  characteristic functions to characteristic functions.
  Assume furthermore that $\dim N \geq 2$.
  Then:
  \begin{enumerate}
  \item $\hat\theta(p) = q\chi_{\hat\theta X}$.
  \item For all finite measure $A \in \fB$:
    $\nu(\theta A) = \mu(A)$.
  \end{enumerate}
\end{lem}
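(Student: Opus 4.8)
The plan is to first invoke \fref{lem:SetMorphism} to extend $\theta$ to a vector-lattice homomorphism $\hat\theta\colon L_0(X,\fB,\mu)\to L_0(Y,\fC,\nu)$ that commutes with every zero-fixing Borel function; its hypotheses are met because the exponent is essentially bounded, so the Nakano norms are $\sigma$-order continuous and an isometric lattice embedding automatically respects point-wise countable suprema (a norm-convergent increasing sequence equals its point-wise sup a.e.). Both conclusions will then follow from a single rigidity statement, the \emph{core claim}: if $p\equiv r$ is constant on a finite-measure set $A\in\fB$, then $q\equiv r$ $\nu$-a.e.\ on $\theta A$ and $\nu(\theta A)=\mu(A)$. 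Granting this, the first part is obtained by approximating $p$ from below by simple functions $\sum_j\rho_j\chi_{P_j}$ with $P_j=\{p=\rho_j\}$, applying the core claim to each level set, and using that $\hat\theta$ respects suprema; the second part then drops out, since once $q\equiv r$ on $\theta A$ the isometry gives $\mu(A)^{1/r}=\|\chi_A\|_{p(\cdot)}=\|\chi_{\theta A}\|_{q(\cdot)}=\nu(\theta A)^{1/r}$, and the general case follows by summing over level sets.

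To prove the core claim I would use $\dim N\ge2$ to split $A=A_1\sqcup A_2$ into two pieces of positive measure (the single-atom case is addressed below). Writing $B_i=\theta A_i$, the homomorphism property makes $B_1,B_2$ disjoint with $B_1\cup B_2=\theta A$, while linearity and isometry transport the weighted $\ell_r$-norm of the two-dimensional sublattice spanned by $\chi_{A_1},\chi_{A_2}$: for all $x,y\ge0$,
\begin{gather*}
  \|x\chi_{B_1}+y\chi_{B_2}\|_{q(\cdot)}
  = \|x\chi_{A_1}+y\chi_{A_2}\|_{p(\cdot)}
  = (a_1 x^r + a_2 y^r)^{1/r}, \qquad a_i=\mu(A_i).
\end{gather*}
Thus the defining equation $\int_{B_1}(x/c)^q\,d\nu+\int_{B_2}(y/c)^q\,d\nu=1$ of the Nakano norm $c=\|x\chi_{B_1}+y\chi_{B_2}\|_{q(\cdot)}$ must hold with $c=(a_1x^r+a_2y^r)^{1/r}$, and $\nu(B_i)=\Theta_{q(\cdot)}(\chi_{B_i})<\infty$ guarantees everything in sight is finite.

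The heart of the argument, and the step I expect to be the main obstacle, is to show that this family of identities forces $q$ to be constant. Pushing $\nu\rest_{B_i}$ forward under the essentially bounded map $q$ gives finite measures $\lambda_i$ on a compact subinterval of $[1,\infty)$; after the substitution $\alpha=a_1x^r/(a_1x^r+a_2y^r)$ and a rescaling of the exponent by $1/r$ the identity becomes
\begin{gather*}
  \int \alpha^{w}\,\sigma_1(dw) + \int (1-\alpha)^{w}\,\sigma_2(dw) = 1 \qquad\text{for all } \alpha\in[0,1],
\end{gather*}
where $\sigma_i(dw)=a_i^{-w}\tilde\lambda_i(dw)$ are probability measures and $\tilde\lambda_i$ is $\lambda_i$ rescaled. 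Letting $\alpha\to0$ one reads off that $\min\supp\sigma_1$ must equal $1$: the first integral behaves like $\alpha^{\min\supp\sigma_1}$, while the forced rate is the linear one $\asymp\alpha$ coming from the expansion of the second integral, so a smaller endpoint decays too slowly and a larger one too fast. Symmetrically $\min\supp\sigma_2=1$, hence both $\sigma_i$ are supported in $[1,\infty)$, where $\alpha\mapsto\alpha^w$ and $\alpha\mapsto(1-\alpha)^w$ are convex. The two integrals are then convex functions of $\alpha$ summing to the affine constant $1$, so each is affine; since $\int\alpha^w\sigma_1(dw)=\alpha$ with $w\ge1$ forces $\alpha^w=\alpha$ $\sigma_1$-a.e., we get $\sigma_1=\delta_1$, and likewise $\sigma_2=\delta_1$. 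Unwinding the substitutions yields $\lambda_i=a_i\,\delta_r$, i.e.\ $q\equiv r$ $\nu$-a.e.\ on $B_i$ and $\nu(B_i)=\mu(A_i)$, which is exactly the core claim on each piece and hence on $\theta A$.

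It remains to treat the case where $A$ reduces to a single atom and cannot be split. Here I would instead pair $A$ with a second positive-measure element of the lattice (available since $\dim N\ge2$), carrying a possibly different constant exponent, and run the analogous two-parameter computation; the level curve $\{a_1u^{r_1}+a_2v^{r_2}=1\}$ replaces the one above, and the same asymptotic-plus-convexity analysis isolates $\lambda_1$ as a point mass at $r$. This is more computational but introduces no new idea, so the essential content remains the splittable case treated above.
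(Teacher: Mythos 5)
Your overall architecture --- extend $\theta$ to $\hat\theta$ via \fref{lem:SetMorphism}, then prove a two-set rigidity statement and deduce both items --- matches the paper's, and your proof of the \emph{core claim} in the constant-exponent, splittable case is correct and genuinely different from the paper's argument: where you push $\nu\rest_{B_i}$ forward under $q$ and combine an endpoint asymptotic with convexity of $\alpha\mapsto\alpha^w$ for $w\ge1$, the paper differentiates the identity $\Theta'(g_t)=1$ under the integral sign and reads off the conclusion from the behaviour as $t\to0$ and $t\to1$. For the special case your route is arguably cleaner.

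The gap is in the reduction of the general statement to the core claim. The core claim needs the exponent $p$ of the \emph{actual} space $N$ to be constant on $A$: only then does $\|x\chi_{A_1}+y\chi_{A_2}\|_{p(\cdot)}$ equal $(a_1x^r+a_2y^r)^{1/r}$. For a general measurable $p$ (say $p(x)=1+x$ on $[0,1]$ with Lebesgue measure) every exact level set $\{p=\rho\}$ is null, so there is nothing to apply the core claim to. Approximating $p$ from below by simple functions does not repair this: on an approximate level set $P=p^{-1}([\rho,\rho+\varepsilon))$ the restricted space is still a Nakano space with non-constant exponent, the explicit norm formula fails, and your two-parameter identity holds only up to an error depending on $\varepsilon$. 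To push the argument through you would need quantitative, perturbation-stable versions of both the endpoint-asymptotics step and the convexity step; the convexity step survives (a sum of two convex functions within $\varepsilon$ of an affine function has each summand within $2\varepsilon$ of its chord), but the identification of $\min\supp\sigma_1$ and the passage from ``$\int\alpha^w\,\sigma_1(dw)$ is approximately $\alpha$'' to ``$\sigma_1$ is close to $\delta_1$'' require genuine additional estimates that you have not supplied or acknowledged. The paper sidesteps the issue entirely by choosing test functions adapted to the variable exponent, namely $f_t=(t/\mu(A))^{1/p(x)}\chi_A+((1-t)/\mu(B))^{1/p(x)}\chi_B$, which satisfy $\Theta(f_t)=1$ \emph{exactly} for arbitrary $p$; since $\hat\theta$ commutes with Borel functions, $g_t=\theta(f_t)$ has the same form with exponent $\hat\theta p$, and differentiating $\Theta'(g_t)=1$ in $t$ compares $q$ with $\hat\theta p$ directly, with no constancy assumption. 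Your item (ii) inherits the same gap, since you also reduce it to the level-set decomposition; once item (i) is known the paper simply evaluates $\Theta'(\theta h)$ for $h=\mu(A)^{-1/p(x)}\chi_A$.
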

\begin{proof}
  First of all the hypotheses of \fref{lem:SetMorphism}
  are verified with $N = L$, so $\hat \theta$ exists.
  Let $Y_0  = \hat\theta X \in \fC$ be the support of the range of $\theta$.
  \begin{align*}
    C_1 & = \{y\in Y_0\colon \hat\theta p(y) < q(y)\}, \\
    C_2 & = \{y\in Y_0\colon \hat\theta p(y) > q(y)\}, \\
    C & = C_1 \cup C_2 = \{y\in Y_0\colon \hat\theta p(y) \neq q(y)\}
  \end{align*}
  Then $C_1,C_2,C \in \fC$ and we need to show that
  $\nu(C) = 0$.
  Let $A,B \in \fB$ be such that $0 < \mu(A),\mu(B) < \infty$.
  For $t \in [0,1]$, let
  \begin{gather*}
    f_t = \chi_A\left( \frac{t}{\mu(A)} \right)^{\frac{1}{p(x)}} +
    \chi_B\left( \frac{1-t}{\mu(B)} \right)^{\frac{1}{p(x)}}, \\
    g_t = \theta(f_t) = \chi_{\theta A}\left( \frac{t}{\mu(A)} \right)^{\frac{1}{\hat\theta p(y)}} +
    \chi_{\theta B}\left( \frac{1-t}{\mu(B)} \right)^{\frac{1}{\hat\theta p(y)}}, \\
  \end{gather*}
  Then $\Theta(f_t) = 1 \Longrightarrow \|f_t\| = 1 \Longrightarrow \|g_t\| = 1 \Longrightarrow \Theta'(g_t) = 1$.
  In other words:
  \begin{gather*}
    \Theta'(g_t) = \int_{\theta A} \left( \frac{t}{\mu(A)} \right)^{\frac{q}{\hat\theta p}} \, d\nu +
    \int_{\theta B} \left( \frac{1-t}{\mu(B)} \right)^{\frac{q}{\hat\theta p}} \, d\nu = 1
  \end{gather*}
  Substituting $t = 0$ and $t = 1$ we see that in particular
  $\nu(A)$ and $\nu(B)$ are both positive and finite.
  We may therefore differentiate under the integral sign for
  $t \in (0,1)$, obtaining:
  \begin{align*}
    0 = \frac{d}{dt} \Theta'(g_t)
    & =
    \int_{\theta A\cap C} {\frac{q}{\mu(A)\hat\theta p}}
    \left( \frac{t}{\mu(A)} \right)^{\frac{q}{\hat\theta p}-1} \, d\nu
    + \int_{\theta A\setminus C} {\frac{q}{\mu(A)\hat\theta p}} \, d\nu \\
    & \qquad
    -
    \int_{\theta B\cap C} {\frac{q}{\mu(B)\hat\theta p}}
    \left( \frac{1-t}{\mu(B)} \right)^{\frac{q}{\hat\theta p}-1} \, d\nu
    - \int_{\theta B\setminus C} {\frac{q}{\mu(B)\hat\theta p}} \, d\nu
  \end{align*}
  If $\nu(\theta A\cap C_2) > 0$
  then $\lim_{t\to0} \frac{d}{dt} \Theta'(g_t) = +\infty \neq 0$ which is impossible,
  so $\nu(\theta A\cap C_2) = 0$, and considering $t \to 1$ we see that
  $\nu(\theta B\cap C_2) = 0$ as well.
  We may therefore substitute $t = 0$ and $t = 1$ and obtain:
  \begin{align*}
    0
    & =
    \int_{\theta A\setminus C} {\frac{q}{\mu(A)\hat\theta p}} \, d\nu \\
    & \qquad
    -
    \int_{\theta B\cap C} {\frac{q}{\mu(B)\hat\theta p}}
    \left( \frac{1}{\mu(B)} \right)^{\frac{q}{\hat\theta p}-1} \, d\nu
    - \int_{\theta B\setminus C} {\frac{q}{\mu(B)\hat\theta p}} \, d\nu \\
    & =
    \int_{\theta A\cap C} {\frac{q}{\mu(A)\hat\theta p}}
    \left( \frac{1}{\mu(A)} \right)^{\frac{q}{\hat\theta p}-1} \, d\nu
    + \int_{\theta A\setminus C} {\frac{q}{\mu(A)\hat\theta p}} \, d\nu \\
    & \qquad
    - \int_{\theta B\setminus C} {\frac{q}{\mu(B)\hat\theta p}} \, d\nu,
  \end{align*}
  whereby
  \begin{gather*}
    \int_{\theta A\cap C} {\frac{q}{\mu(A)\hat\theta p}}
    \left( \frac{1}{\mu(A)} \right)^{\frac{q}{\hat\theta p}-1} \, d\nu
    =
    -
    \int_{\theta B\cap C} {\frac{q}{\mu(B)\hat\theta p}}
    \left( \frac{1}{\mu(B)} \right)^{\frac{q}{\hat\theta p}-1} \, d\nu
  \end{gather*}
  This is only possible if both are zero, i.e., if
  $\nu(\theta A\cap C) = \nu(\theta B\cap C) = 0$.

  We have shown that $\nu(\theta A\cap C) = \nu(\theta B\cap C) = 0$ for every
  $A,B \in \fB$ disjoint of finite non zero measure.
  If $N$ had dimension $\leq1$ this would be vacuous, but as we assume
  that it has dimension $\geq 2$ we have in fact
  $\nu(\theta A\cap C) = 0$ for all $A \in \fB$ such that $\mu(A) < \infty$.
  It follows that $\nu(C) = \nu(Y_0\cap C) = 0$,
  i.e., that $\hat\theta p = q\chi_{Y_0}$.

  Now let $A \in \fB$ be of finite non zero measure,
  $h = \mu(A)^{-1/p(x)}$.
  Then $\Theta(h) = 1 \Longrightarrow 1 = \Theta'(\theta(h)) = \nu(\theta A)/\mu(A)$.
\end{proof}

\begin{rmk*}
  A special case of this result was independently obtained at the same
  time by Poitevin and Raynaud
  \cite[Lemma~6.1]{Poitevin-Raynaud:PositiveContractiveProjections}.
\end{rmk*}

The technical assumption that $\theta$ sends characteristic functions to
such (i.e., acts on measurable sets) is easy to obtain via a
density change:
\begin{lem}
  \label{lem:NakanoEmbedDenseChange}
  Let $N = L_{p(\cdot)}(X,\fB,\mu)$ and $N' = L_{q(\cdot)}(Y,\fC,\nu)$ be two
  Nakano spaces, and let
  $\theta\colon N \to N'$ be an isometric embedding of Banach lattices.
  Then there is a measure $\lambda$ on $(Y,\fC)$, equivalent to
  $\nu$, such that
  $D_{\nu,\lambda} \circ \theta\colon N \to N'' = L_{q(\cdot)}(Y,\fC,\lambda)$
  sends characteristic functions to
  characteristic functions,
  where $D_{\nu,\lambda}\colon N' \to N''$ is the density change
  isomorphism from \fref{fct:DenseChange}.
\end{lem}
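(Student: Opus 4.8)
The plan is to read off, from the way the lattice homomorphism $\theta$ acts on characteristic functions, a single positive density function on $Y$, and then to flatten it to $1$ by a density change provided by \fref{fct:DenseChange}. Since $\theta$ is a Banach lattice isometric embedding it is in particular a vector lattice homomorphism, hence preserves disjointness and additivity; these are the only features I shall use.

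First I would record the structural consequences. For finite measure $A \in \fB$ write $g_A = \theta(\chi_A) \geq 0$. If $A, B$ are disjoint then $\chi_A \wedge \chi_B = 0$, so $g_A \wedge g_B = 0$, while $g_{A \cup B} = g_A + g_B$ by linearity. Consequently, for $A \subseteq B$ the element $g_A$ satisfies $0 \leq g_A \leq g_B$ and $g_A \wedge (g_B - g_A) = g_A \wedge g_{B \setminus A} = 0$, i.e.\ $g_A$ is a component of $g_B$. In $L_0(Y,\fC,\nu)$ the components of a positive function $g$ are exactly the functions $g\chi_S$ with $S \subseteq \supp g$ measurable (pointwise, $e \wedge (g - e) = 0$ forces $e(y) \in \{0, g(y)\}$), so $g_A = g_B\,\chi_{\supp g_A}$.

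Next I would glue these into one function. For arbitrary finite-measure $A, A'$, applying the previous observation with $B = A \cup A'$ exhibits both $g_A$ and $g_{A'}$ as restrictions of the single function $g_B$ to their supports; hence they agree on $\supp g_A \cap \supp g_{A'}$. Setting $G(y) = g_A(y)$ for any $A$ with $y \in \supp g_A$ therefore yields a well-defined function $G$ on $Y_0 := \bigcup_A \supp g_A$, strictly positive there and with $g_A = G\,\chi_{\theta A}$, where I abbreviate $\theta A := \supp g_A$. Measurability and a.e.-finiteness of $G$ are local properties and glue along a strict-localisability witness of $Y$, exactly as in the last paragraph of the proof of \fref{lem:SetMorphism}. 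Finally, extending $G$ by $1$ off $Y_0$ and putting $\zeta = G^q$, essential boundedness of $q$ and a.e.-finiteness of $G$ give $\zeta \in L_0^+(Y,\fC,\nu)$ with $\zeta > 0$ a.e.; by \fref{fct:RadonNikodym} the measure $\lambda$ with $\frac{d\lambda}{d\nu} = \zeta$ is well-defined, and applying the same fact to $1/\zeta$ shows $\nu \ll \lambda$, so $\lambda$ is equivalent to $\nu$. Then $(\frac{d\nu}{d\lambda})^{1/q} = G^{-1}$ on $Y_0$, so by \fref{fct:DenseChange} the isomorphism $D_{\nu,\lambda}$ sends $g_A = G\,\chi_{\theta A}$ to $\chi_{\theta A}$, whence $D_{\nu,\lambda} \circ \theta$ carries each $\chi_A$ to the characteristic function $\chi_{\theta A}$, as required.

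The main obstacle is the gluing step: one must verify that the images $\theta(\chi_A)$ are governed by a \emph{single} globally defined density $G$ rather than by mutually incompatible local data. This is precisely what the component-and-disjointness analysis, together with the consistency-on-overlaps argument, secures; once $G$ is in hand the density change is routine and its equivalence to $\nu$ is immediate from the Radon--Nikodým fact.
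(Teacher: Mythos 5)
Your proof is correct and follows essentially the same route as the paper: both construct a density $\zeta$ equal to $\theta(\chi_A)^q$ on the support of the image (and to $1$ off it) and then invoke \fref{fct:DenseChange}. The only difference is organisational — the paper defines $\zeta$ using just the sets of a strict-localisability witness of $X$ and deduces the claim for arbitrary characteristic functions at the very end from the lattice-embedding property, whereas you make the underlying component/compatibility argument explicit up front in order to glue a single globally defined density $G$.
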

\begin{proof}
  Let $\{X_i\}_{i \in I} \subseteq \fB$ and
  $\{Y_j\}_{j \in J} \subseteq \fC$
  witness that $X$ and $Y$ are strictly localisable.
  Possibly replacing them with refinements as in
  \fref{fct:WitnessRefinement} we may assume that
  $I \subseteq J$ and that for $i \in I$ the set $Y_i$ is
  the support of $\theta(\chi_{X_i})$.
  Define $\zeta\colon Y \to \bR^+$ by
  \begin{gather*}
    \zeta =
    \sum_{i \in I} \theta(\chi_{X_i})^{q}
    + \sum_{j \in J\setminus I}\chi_{Y_j}.
  \end{gather*}
  This function is measurable and non zero a.e., allowing us to define
  another measure $\lambda$ by $d\lambda = \zeta d\nu$.
  Then $\nu$ and $\lambda$ are equivalent measures,
  and $D_{\nu,\lambda} \circ \theta(\chi_{X_i}) = \chi_{Y_i}$.
  Since this is an embedding of Banach lattices it follows that it sends
  every characteristic function to a characteristic function.
\end{proof}

Putting everything together we obtain:
\begin{thm}
  \label{thm:EmbedFactor}
  Let $N = L_{p(\cdot)}(X,\fB,\mu)$ and $N' = L_{q(\cdot)}(Y,\fC,\nu)$ be two
  Nakano spaces, $\dim N \geq 2$, and let
  $\theta\colon N \to N'$ be an isometric embedding of Banach lattices.
  Then up to a measure density change on $Y$:
  \begin{enumerate}
  \item $\theta$ sends characteristic functions to such.
  \item $\hat\theta p = q\chi_{\hat\theta X}$.
  \item For all finite measure $A$: $\nu(\theta A) = \mu(A)$.
  \end{enumerate}
\end{thm}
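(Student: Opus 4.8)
The plan is to chain the two preceding lemmas, since the theorem merely repackages their conclusions.

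First I would apply \fref{lem:NakanoEmbedDenseChange} to obtain a measure $\lambda$ on $(Y,\fC)$, equivalent to $\nu$, such that the composite $\theta' = D_{\nu,\lambda}\circ\theta\colon N \to N'' = L_{q(\cdot)}(Y,\fC,\lambda)$ sends characteristic functions to characteristic functions. Passing from $\nu$ to the equivalent measure $\lambda$ is exactly the permitted ``measure density change on $Y$''; by \fref{fct:DenseChange} the map $D_{\nu,\lambda}$ is an isometric Banach lattice isomorphism, so $\theta'$ remains an isometric embedding of Banach lattices, and conclusion (i) holds for $\theta'$. After this change I would rename $\theta'$ back to $\theta$, $\lambda$ back to $\nu$, and $N''$ back to $N'$, so that we may now assume outright that $\theta$ sends characteristic functions to characteristic functions.

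With that hypothesis in hand, and $\dim N \geq 2$ unchanged, I would invoke \fref{lem:NakanoEmbedSets} directly. Its two conclusions are precisely $\hat\theta p = q\chi_{\hat\theta X}$ and $\nu(\theta A) = \mu(A)$ for every finite-measure $A \in \fB$, which are conclusions (ii) and (iii).

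The only point that warrants a line of verification is that composing with $D_{\nu,\lambda}$ preserves the structural hypotheses needed to invoke \fref{lem:SetMorphism} (and hence \fref{lem:NakanoEmbedSets}) for $\theta'$: linearity, preservation of point-wise countable suprema where they exist, and sending characteristic functions to characteristic functions. The last is supplied by \fref{lem:NakanoEmbedDenseChange}, while the first two are inherited because $D_{\nu,\lambda}$ is a Banach lattice isomorphism and in particular preserves order and countable suprema. I expect no genuine obstacle here; the substantive work is already done inside the two lemmas, and this theorem is essentially a bookkeeping step that assembles them.
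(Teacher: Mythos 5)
Your proposal is correct and is exactly the paper's argument: the paper proves this theorem by declaring it immediate from \prettyref{lem:NakanoEmbedDenseChange} and \prettyref{lem:NakanoEmbedSets}, which is precisely the chaining you describe. Your extra remark checking that the density-change isomorphism preserves the hypotheses needed for the second lemma is a reasonable (if routine) piece of bookkeeping that the paper leaves implicit.
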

\begin{proof}
  Immediate from \fref{lem:NakanoEmbedDenseChange}
  and \fref{lem:NakanoEmbedSets}.
\end{proof}

\begin{cor}
  \label{cor:RigidModular}
  Let $(N,\Theta) = L_{p(\cdot)}(X,\fB,\mu)$, $(N',\Theta') = L_{q(\cdot)}(Y,\fC,\nu)$
  be two Nakano spaces, $\dim N \geq 2$, and let
  $\theta\colon N \to N'$ be an embedding of Banach lattices.
  Then $\theta$ respects the modular functional: $\Theta = \Theta' \circ \theta$.
\end{cor}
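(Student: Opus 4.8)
The plan is to reduce to the situation produced by \fref{thm:EmbedFactor} and then to read off both modular functionals from the functional-calculus clause of \fref{lem:SetMorphism}. First I would exploit the fact that the modular functional is insensitive to a density change on $Y$: if $\lambda$ is equivalent to $\nu$ and $D_{\nu,\lambda}\colon N' \to N'' = L_{q(\cdot)}(Y,\fC,\lambda)$ is the isomorphism of \fref{fct:DenseChange}, then $\Theta'' \circ D_{\nu,\lambda} = \Theta'$, so writing $\theta'' = D_{\nu,\lambda}\circ\theta$ we have $\Theta'\circ\theta = \Theta''\circ\theta''$. It therefore suffices to prove the identity for $\theta''$. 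Invoking \fref{thm:EmbedFactor} (which supplies exactly the needed $\lambda$ via \fref{lem:NakanoEmbedDenseChange}), I may thus assume from the outset that $\theta$ sends characteristic functions to characteristic functions, that $\hat\theta p = q\chi_{Y_0}$ where $Y_0 = \hat\theta X$, and that $\nu(\theta A) = \mu(A)$ for every $A$ of finite measure.

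Under these assumptions the hypotheses of \fref{lem:SetMorphism} are met with $L = N$ (exactly as in the proof of \fref{lem:NakanoEmbedSets}), so $\hat\theta$ is defined and agrees with $\theta$ on $N$. The key step is to apply its functional-calculus clause to the two-variable Borel function $\varphi(s,t) = |s|^t$ for $t \geq 1$, extended by $\varphi(s,t) = 0$ for $t < 1$ so that $\varphi$ is Borel and fixes zero, evaluated at the pair $\bar f = (f,p)$. Since $p \geq 1$ a.e.\ one has $\varphi\circ\bar f = |f|^p$, and since $\theta\bar f = (\theta f,\, q\chi_{Y_0})$ with $q \geq 1$ on $Y_0$ and $\theta f$ supported on $Y_0$, one gets $\varphi\circ(\theta\bar f) = |\theta f|^q$. \fref{lem:SetMorphism} then yields the pointwise identity $\hat\theta(|f|^p) = |\theta f|^q$.

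The final step is to integrate this identity. For $f \in N$ we have $\Theta(f) = \int |f|^p\,d\mu < \infty$, so $|f|^p \in L_1(X,\fB,\mu)$; because $\nu(\theta A) = \mu(A)$ on sets of finite measure, \fref{lem:SetMorphismIntegral} applies to $|f|^p$ and gives $\int |f|^p\,d\mu = \int \hat\theta(|f|^p)\,d\nu$. Combining this with the identity of the previous paragraph produces $\Theta(f) = \int |\theta f|^q\,d\nu = \Theta'(\theta f)$, which is the desired conclusion.

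The only genuinely delicate point I anticipate is the bookkeeping in the functional-calculus step: one must arrange $\varphi$ so that it honestly fixes zero and is Borel while still recovering $|f|^p$ on $X$ and $|\theta f|^q$ on $Y$, and one must use that $\hat\theta p$ equals $q$ \emph{exactly} on $Y_0$ and vanishes off it, so that the degenerate value $0^0$ never enters the computation. Everything else is a routine invocation of the lemmas already established.
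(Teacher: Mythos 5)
Your proposal is correct and follows essentially the same route as the paper: reduce via the density-change invariance of $\Theta'$ to the situation of \fref{thm:EmbedFactor}, use the functional-calculus clause of \fref{lem:SetMorphism} to identify $\hat\theta(|f|^p)$ with $|\theta f|^q$, and integrate via \fref{lem:SetMorphismIntegral}. The paper compresses the functional-calculus step into the single equality $\int|\theta(f)|^{\hat\theta p}\,d\nu = \int\hat\theta(|f|^p)\,d\nu$; your explicit choice of the Borel function $\varphi(s,t)$ fixing zero is just a careful spelling-out of that same step.
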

\begin{proof}
  According to \fref{fct:DenseChange} a density change on $Y$
  does not alter $\Theta'$.
  Thus we may assume that $\theta$ is as in the conclusion of
  \fref{thm:EmbedFactor}.
  By \fref{lem:SetMorphismIntegral} we then obtain
  for all $f \in N$:
  \begin{gather*}
    \Theta'\circ\theta(f) = \int |\theta(f)|^q\, d\nu = \int |\theta(f)|^{\hat\theta p}\, d\nu
    = \int \hat\theta( |f|^p )\,d\nu = \int |f|^p \, d\mu = \Theta(f).
  \end{gather*}
\end{proof}

\begin{cor}
  \label{cor:RigidEssRng}
  Let $(N,\Theta) = L_{p(\cdot)}(X,\fB,\mu)$ and
  $(N',\Theta') = L_{q(\cdot)}(Y,\fC,\nu)$
  be two Nakano spaces, $\dim N \geq 2$, and let
  $\theta\colon N \to N'$ be an embedding of Banach lattices.
  Then $\essrng p \subseteq \essrng q$.
  If the band generated by $\theta(N)$ in $N'$ is all of
  $N'$ (so in particular, if $\theta$ is an isomorphism)
  then $\essrng p  = \essrng q$.
\end{cor}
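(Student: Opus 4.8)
The plan is to reduce to the normal form provided by \fref{thm:EmbedFactor} and then exploit the fact, established inside the proof of \fref{lem:SetMorphism}, that the induced homomorphism $\hat\theta$ commutes with the Borel functional calculus. First I would invoke \fref{thm:EmbedFactor}: after a density change on $Y$ we may assume that $\theta$ sends characteristic functions to characteristic functions, that $\hat\theta p = q\chi_{Y_0}$ where $Y_0 = \hat\theta X$, and that $\nu(\theta A) = \mu(A)$ for every finite measure $A$. This density change is harmless, since by \fref{fct:DenseChange} it leaves $q$ (and hence $\essrng q$) unchanged and of course does not touch $p$. I also record the elementary observation that $\hat\theta$ is injective on positive functions: if $0 \leq g$ has support of positive measure then $c\chi_A \leq g$ for some $c > 0$ and some $A$ with $0 < \mu(A) < \infty$ (intersect $\{g > c\}$ with a finite piece of a strict localisation witness), whence $\hat\theta g \geq c\chi_{\theta A} \neq 0$ because $\nu(\theta A) = \mu(A) > 0$.

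Next, to prove $\essrng p \subseteq \essrng q$, I would fix $r \in \essrng p$; since $p \geq 1$ we have $r \geq 1$, so for every $\varepsilon \in (0,1)$ the interval $(r-\varepsilon,r+\varepsilon)$ is contained in $(0,\infty)$ and avoids $0$. Choose a continuous tent function $\varphi = \varphi_{r,\varepsilon} \geq 0$ with $\varphi(0) = 0$ and $\{\varphi > 0\} = (r-\varepsilon,r+\varepsilon)$. Because $r \in \essrng p$, the composite $\varphi \circ p$ has support of positive measure, so by the remark above $\hat\theta(\varphi \circ p) \neq 0$; but $\hat\theta(\varphi \circ p) = \varphi \circ \hat\theta p = \varphi \circ (q\chi_{Y_0})$ by \fref{lem:SetMorphism}, and since $\varphi(0)=0$ this function is supported inside $Y_0$ and agrees there with $\varphi \circ q$. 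Hence $\{|q - r| < \varepsilon\}$ has positive $\nu$-measure for every small $\varepsilon$, i.e.\ $r \in \essrng q$.

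Finally I would treat the reverse inclusion under the hypothesis that the band generated by $\theta(N)$ is all of $N'$. Here the point, which I would state explicitly as a standard fact about bands in function lattices, is that this band is precisely the set of elements of $N'$ supported on the carrier $Y_0$ of $\theta(N)$, so the hypothesis forces $\nu(Y \setminus Y_0) = 0$ and thus $q = \hat\theta p$ a.e. One then runs the previous argument in reverse: if $r \notin \essrng p$ then $\varphi_{r,\varepsilon} \circ p = 0$ a.e.\ for some $\varepsilon$, whence $\varphi_{r,\varepsilon} \circ q = \varphi_{r,\varepsilon} \circ \hat\theta p = \hat\theta(\varphi_{r,\varepsilon} \circ p) = 0$ a.e.\ on $Y = Y_0$, so $r \notin \essrng q$; combined with the inclusion already proved, this gives $\essrng p = \essrng q$. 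I expect the only genuinely delicate step to be the identification of the band generated by $\theta(N)$ with the functions supported on $Y_0$ — everything else being a formal consequence of the commutation of $\hat\theta$ with the functional calculus — so that is where I would be most careful, checking that the carrier of $\theta(N)$ really is $Y_0 = \hat\theta X$ and that fullness of the band is equivalent to $Y_0 = Y$ modulo null sets.
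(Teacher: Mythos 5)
Your proposal is correct and follows essentially the same route as the paper: reduce to the normal form of Theorem~\ref{thm:EmbedFactor} via the density change (which alters neither $p$ nor $q$), then read the essential ranges off the identity $\hat\theta p = q\chi_{\hat\theta X}$ using the facts that $\hat\theta$ commutes with the Borel functional calculus and that $\nu(\theta A) = \mu(A)$. The paper compresses your tent-function argument and your identification of the band generated by $\theta(N)$ with the functions supported on $\hat\theta X$ into the single assertion $\essrng p = \essrng \hat\theta p \setminus \{0\} \subseteq \essrng q$ (and $\hat\theta X = Y$, $q = \hat\theta p$ when the band is full), so you have simply supplied the details it declares ``not difficult to see''.
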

\begin{proof}
  The density change does not modify $p$ and thus neither its range,
  so again we may assume that $\theta$ is as in the conclusion of
  \fref{thm:EmbedFactor}.
  It is also not difficult to see that
  $\essrng p = \essrng \hat\theta p \setminus \{0\} \subseteq \essrng q$.
  If the band generated by $\theta(N)$ in $N'$ is all of $N$ then
  $\hat\theta X = Y$ and $q = \hat\theta p$.
\end{proof}

In the case where $\theta$ is an isomorphism this has already been proved
by Poitevin \cite[Proposition~3.4.4]{Poitevin:PhD}.

\section{Model theory of Nakano spaces}
\label{sec:NakanoModelTheory}
\subsection{The model theoretic setting}

We assume familiarity with the general setting of continuous
first order logic, as exposed in \cite{BenYaacov-Usvyatsov:CFO} or
\cite{BenYaacov-Berenstein-Henson-Usvyatsov:NewtonMS}.
Since continuous logic only allows bounded metric structures we cannot
treat Banach spaces directly.
The two standard solutions for this are
either to consider a Banach space as a multi-sorted structure, with a
sort for $\bar B(0,n)$ (the closed ball of radius $n$) for each $n$,
or to restrict our consideration to the first of these sorts, i.e.,
the closed unit ball.
(There exists also a third solution which we shall not consider here,
namely to treat the entire Banach lattice as an
unbounded metric structure, see \cite{BenYaacov:Perturbations}.)
While Poitevin chose to use the former we consider the latter to be
preferable, so a few words regarding the difference in approaches
is in order.

The unit ball of a Banach space is, first of all, a complete convex
space, i.e., a complete metric space equipped with a
convex combination operation from an ambient Banach space.
Such structures were characterised by
Machado \cite{Machado:Convex} in a language containing all convex
combinations, and this characterisation can be expressed in continuous
logic.
There are advantages to a minimalistic language, though, so we prefer
to work in a language consisting of a single function symbol
$\half[x+y]$.
Convex combinations with coefficients of the form $\frac{k}{2^n}$ can
be obtained as more complex terms in this language, and arbitrary
convex combinations with real coefficients are obtained as limits (as
our structures are by definition complete), so this language is quite
sufficient.
While it follows from Machado's work that an axiomatisation of
unit balls of Banach spaces exists in this language, it seems
preferable to put an explicit axiomatisation of this kind on record
along with a complete (outline of a) proof.

Let $T_{cvx}$ consist of the following axioms:
\begin{align*}
  \tag{ID} & \forall x \, \left[
    \half[x+x] = x
  \right],
  &&\text{i.e.,}
  && \sup_x \left[ d\left( \half[x+x], x \right) \right ] = 0, \\
  \tag{PRM} & \forall xyzt\, \left[
    \half\left( \half[x+y] + \half[z+t] \right) =
    \half\left( \half[z+x] + \half[t+y] \right)
  \right], &&&&\text{etc.} \\
  \tag{HOM} & \forall xyz \, \left[
    d\left( \half[x+z], \half[y+z] \right) =
    \half[d(x,y)]
  \right].
\end{align*}

we shall usually be interested in subsets of Banach spaces which are
not only convex, but also contain zero and are symmetric around it
(i.e., $-x$ exists for all $x$).
The unit ball is such a space, but is not the only interesting one
(another one is the unit
ball of a von Neumann algebra with a normalised finite trace $\tau$:
it is a \emph{proper} subset of the unit ball of the Hilbert spaces
with inner product $\langle x,y\rangle = \tau(x^*y)$).
The natural language for such \emph{symmetric} convex spaces is
\begin{gather*}
  \cL_{Bs} = \{0,-,\half[x+y],\|\cdot\|\}.
\end{gather*}
we shall use $\half[x-y]$ as shorthand for $\half[x+(-y)]$.
Since we wish to admit the unit ball of a Banach space as a structure
in this language we shall interpret the distinguished distance symbol
as half the usual distance $d(x,y) = \|\half[x-y]\|$, noticing the
latter is an atomic formula.
We define $T_{sc}$ (for symmetric convex) as $T_{cvx}$ along with:
\begin{align*}
  \tag{SYM}
  & \forall x \, \left[
    \half[x-x] = 0
  \right] \\
  \tag{NORM}
  & \forall x \, \left[
    d(x,0) = \half \| x \|
  \right]
\end{align*}

Finally, we define $T_{Bs}$, the theory of (unit balls of) Banach
spaces as $T_{sc}$ along with
\begin{align*}
  \tag{FULL}
  & \forall x \exists y \, \left[
    \|x\| \geq \half \text{ or } \half[y] = x
  \right]&& \text{i.e.,} &&
  \sup_x \inf_y \, \left[
    \big( \half \dotminus \|x\| \big) \wedge d(\half[y+0],x)
  \right] = 0
\end{align*}

\begin{thm}
  \begin{enumerate}
  \item The models of $T_{cvx}$ are precisely complete convex subsets
    of diameter $\leq 1$ of Banach spaces.
  \item The models of $T_{sc}$ are
    precisely complete convex subsets of unit balls of Banach spaces
    which are symmetric around zero.
  \item The models of $T_{Bs}$ are
    precisely closed unit balls of Banach spaces.
  \end{enumerate}
\end{thm}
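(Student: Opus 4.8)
The plan is to prove both inclusions for each of the three items, the forward (soundness) direction being routine and the converse (representation) direction being the substance. For soundness, I would simply observe that the midpoint operation $m(x,y) = \half[x+y]$ in any Banach space visibly satisfies (ID), (PRM) and (HOM), that a complete convex subset is a complete metric space of the asserted diameter, that a symmetric-about-zero convex subset satisfies (SYM) and (NORM), and that the full unit ball satisfies (FULL) (if $\|x\| < \half$ then $y = 2x$ lies in the ball and $x = \half[y+0]$). So the real work is to show that an abstract model reconstructs the geometry; this is the content of Machado's characterisation, which I would reprove in a self-contained way. The governing idea is that the midpoint operation together with (ID), (PRM), (HOM) already encodes a genuine real convex-combination structure, after which one manufactures an ambient Banach space.

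First I would build convex combinations from midpoints. For dyadic $\lambda = k/2^n \in [0,1]$ and $x,y \in M$, define $\lambda x \oplus (1-\lambda)y$ by iterating $m$, and use (ID) and (PRM) to show this is independent of the order of operations and satisfies the barycentric identities (the correct unit, symmetry, and nested-combination laws). The \emph{entropic} law (PRM) is precisely what makes iterated midpoints consistent, and the combinatorial bookkeeping here is the technical heart of the argument. I would then invoke (HOM), iterated, to show that $x \mapsto \lambda x \oplus (1-\lambda)z$ is a similarity of ratio $\lambda$, so that each convex-combination map is Lipschitz in all arguments; since $M$ is complete (being a model in continuous logic), the dyadic operations extend uniquely and continuously to all real coefficients in $[0,1]$. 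At this point $M$ is a complete metric space carrying honest real convex combinations.

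Next I would supply the linear structure. In the symmetric case ($T_{sc}$), where $0$ and $-$ are available, I would form $B = \bigcup_n 2^n M$ as a directed union of rescaled copies of $M$ glued along the halving map $x \mapsto m(x,0)$, defining addition by $2^n x + 2^n y = 2^{n+1} m(x,y)$ and negation from $-$; the vector-space axioms follow from (ID), (PRM), (SYM), and one defines $\|\cdot\|_B$ from the structure norm, deriving homogeneity and the triangle inequality from (NORM), (HOM) and the triangle inequality of the metric $d$. Completing $B$ gives a Banach space in whose unit ball $M$ sits as a convex, symmetric, and (since $M$ is complete) closed subset, which settles item (2); item (1) is the same scheme but, lacking a distinguished zero, I would first pass to the cone $M \times (0,\infty)$ with addition $(x,t)+(y,s) = \bigl(\tfrac{t}{t+s}x \oplus \tfrac{s}{t+s}y,\ t+s\bigr)$ and then to its Grothendieck vector space, where the cancellation needed for the group completion follows from the injectivity of the similarities provided by (HOM).

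Finally, for item (3) I would bootstrap from (2). Realising a model of $T_{Bs}$ as a complete symmetric convex subset $M$ of the unit ball of $B := \overline{\Span}\,M$, the axiom (FULL) reads: $x \in M$ with $\|x\|_B < \half$ implies $2x \in M$. Iterating, if $w \in 2^k M$ has $\|w\|_B < 1$ then each intermediate point $2^j w / 2^k$ ($0 \le j \le k-1$) has norm $\le \half\|w\|_B < \half$, so repeated doubling keeps us in $M$ and yields $w \in M$; since $\Span\,M = \bigcup_n 2^n M$ is dense and $M$ is closed, every $z$ with $\|z\|_B < 1$ lies in $M$, whence $M$ is exactly the closed unit ball. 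The main obstacle throughout is the second paragraph: extracting a fully consistent, associative real convex-combination algebra from (PRM) alone, and then—especially in the non-symmetric case—turning it into a normed vector space with a bona fide triangle inequality rather than merely a metric convex structure.
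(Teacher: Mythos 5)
Your proposal is correct, and its engine --- extracting dyadic convex combinations from the midpoint operation via (ID) and (PRM), using (HOM) to see that these maps are similarities and hence extend to real coefficients by completeness, and only then manufacturing an ambient vector space --- is exactly the paper's. The packaging differs in three places. First, the paper handles item (i) by \emph{reducing} it to item (ii): it forms the set of formal differences $C-C$, checks that $d_0(x-x',y-y') = d(\half[x+y'],\half[y+x'])$ is a pseudometric (the derived inequality (TRI2) playing the role of your cancellation-via-injectivity-of-similarities observation), and quotients to get a model of $T_{sc}$ into which $C$ embeds after a rescaling; this keeps the whole argument inside diameter-one structures and defers all vector-space construction to the symmetric case, whereas your cone-plus-Grothendieck route attacks (i) directly and must therefore norm a larger space and implicitly fix a basepoint --- workable, but exactly the step you yourself flag as the main obstacle. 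Second, for item (ii) the paper realises the ambient space as the quotient $\bR^{>0}\times S/{\sim}$ with $\|[\alpha,x]\| = \alpha\|x\|$ rather than as your directed union $\bigcup_n 2^n M$ glued along halving; the two constructions produce the same linear span and either works. Third, the paper dismisses item (iii) as immediate, while you supply an actual argument (iterated application of (FULL) to points of norm below $\half$, then density of the span and closedness of $M$); your iteration is sound, with the one caveat that (FULL) as written only provides \emph{approximate} preimages under halving, so an extra appeal to completeness is needed (the approximating $y$'s are Cauchy because halving scales distances by $\half$) to produce the exact $y$ with $\half[y+0]=x$ before the doubling step can be repeated.
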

\begin{proof}
  For each of the assertions it is clear that all the said structures
  are models, so we prove the converse.
  we shall start by examining the case of $T_{cvx}$, reducing it to
  that of $T_{sc}$.

  From the axioms we can deduce commutativity and a variant of the
  triangle inequality:
  \begin{gather*}
    \tag{COMM}
    \half[x+y]
    = \half\left( \half[x+x] + \half[y+y] \right)
    = \half\left( \half[y+x] + \half[y+x] \right)
    = \half[y+x] \\
    \label{eq:tri2} \tag{TRI2}
    d\left( \half[x+y], \half[z+w] \right)
    \leq
    d\left( \half[x+y], \half[z+y] \right) +
    d\left( \half[z+y], \half[z+w] \right)
    = \half[d(x,z)+d(y,w)]
  \end{gather*}

  Now let $C \models T_{cvx}$.
  Let $C-C$ be the set of all formal differences $x-y$ for $x,y \in C$,
  and define $d_0(x-x',y-y') = d(\half[x+y'],\half[y+x'])$.
  This is a pseudo-metric.
  Indeed, symmetry and reflexivity are clear, and for transitivity one
  checks:
  \begin{align*}
    d(\half[x+z'],\half[z+x'])
    & = 2d\left( \half\left( \half[x+z'] + \half[y+y'] \right),
      \half\left( \half[z+x'] + \half[y+y'] \right) \right) \\
    & = 2d\left( \half\left( \half[x+y'] + \half[y+z'] \right),
      \half\left( \half[z+y'] + \half[y+x'] \right) \right) \\
    & \leq^{\fref{eq:tri2}} d\left( \half[x+y'], \half[y+x'] \right) +
    d\left( \half[y+z'], \half[z+y'] \right)
  \end{align*}
  Thus $d_0(x-y,z-t) = 0$ defines an equivalence $\sim$ relation on
  $C-C$, and $d_0$ induces a metric on
  $C_- = (C-C)/{\sim} = \{[x-y]\colon x,y \in C\}$.
  It is straightforward to verify that
  $\half[{[x-y]+[z-t]}] = \left[ \half[x+z]-\half[y+t] \right]$,
  $0 = [x-x]$ and $-[x-y] = [y-x]$ are
  well defined and render $C_-$ a model of $T_{sc}$.
  Finally, if $x_0 \in C$ is any fixed element then
  $x \mapsto [x-x_0]$ is an embedding of $C$ in $C_-$ which respects convex
  combination and shrinks distances by a factor of $2$.
  It follows that if we prove that $C_-$ embeds in a Banach space, so
  does $C$.
  We thus reduced the first assertion to the second.

  We now work modulo $T_{sc}$.
  First, observe that
  $d(x,y) = 2d(\half[x-y],\half[y-y]) = 2d(\half[x-y],0)
  = \|\half[x-y]\|$.
  Thus the relation between the distance and the norm is as expected.

  A similar reasoning shows that $\half[x+y] = 0$ implies
  $d(y,-x) = 2d(\half[x+y],\half[x-x]) = 0$, so
  $y = -x$.
  It follows that $-(-x) = x$ and that
  $-\half[x+y] = \half[-x-y]$
  (since $\half\left( \half[x+y] + \half[-x-y] \right)
  = \half\left( \half[x-x] + \half[y-y] \right) = \half[0+0] = 0$).

  Fix a model $S \models T_{sc}$.
  For $x \in S$,
  let us define $\half x = \half[x+0]$, and by induction we can
  further define $2^{-n}x$ for all $n$.
  If there is $y$ such that $x = \half y$ then $y$ is unique (indeed,
  if $z$ were another such element then
  $0 = d(x,x) = \half d(y,z)$ so $y = z$), and we may unambiguously
  write $y = 2x$.
  If $2\half[x+y]$ exists we write it as $x+y$.
  It follows from the definition that $x+0 = x$ and $x+(-x) = 0$.
  By definition we have $\half (x+y) = \half[x+y]$ (provided that
  $x+y$ exists), and applying the permutation axiom we get
  $\half x+ \half y = \half[x+y]$, from which it follows that
  $\half(-x) = -\half x$ and $\half (x+y) = \half x + \half y$
  (provided $x+y$ exists).

  From the commutativity of $\half[x+y]$ it follows that
  $x+y = y+x$, by which we mean that one exists if and only if the
  other does, in which case they are equal.
  Similarly, by the permutation axioms, if $x+y$ and $y+z$ exist
  then $(x+y)+z = x+(y+z)$.
  This means we can write something like $\sum_{i<k} x_i$ unambiguously,
  without having to specify either parentheses or order, as long as we
  know that for every subset $w \subseteq k$ the partial sum
  $\sum_{i\in w} x_i$ exists in some order and with some organisation of the
  parentheses.
  In particular, this means that
  $\sum_{i<m} k_i2^{-n_i}x_i$ always makes sense for $n_i \in \bN$, $k_i \in \bZ$
  satisfying
  $\sum 2^{-n_i}|k_i| \leq 1$, and that sums and differences of such expressions
  behave as expected
  (in particular: $2^{-n-1}x + 2^{-n-1}x = 2^{-n}x$).
  It follows that $k2^{-n}(\ell2^mx) = (k\ell)2^{-n-m}x$.

  It follows directly from the axioms that
  $\|\half x\| = \half d(\half x,0) = \half\cdot\half d(x,0) = \half \|x\|$.
  We obtain $\|x\| = 2d(0,x) = 2\left\| \half[0-x] \right\| = \|-x\|$,
  and if $x+y$ exists then
  $\|x+y\| = 2d(x+y,0) \leq 2d(x+y,y) + 2d(y,0)
  = 2\left\|\half[(x+y)-y]\right\| + 2\left\|\half[y-0]\right\|
  = \|x\| + \|y\|$.
  By induction on $n$ one proves first that $\|2^{-n}x\| = 2^{-n}\|x\|$,
  and then that for all $0 \leq k \leq 2^n$:
  $\|k2^{-n}x\| = k2^{-n}\|x\|$.
  It follows that $\|\sum_{i<m} k_i2^{-n}x_i\| \leq 2^{-n}\sum |k_i|$.

  Thus for every
  $\alpha \in [-1,1]$ we can define $\alpha x$ as a limit of $k_n2^{-n}x$.
  We obtain that $\sum \alpha_ix_i$ always makes sense if
  $\sum |\alpha_i| \leq 1$,
  $\alpha(\beta x) = (\alpha\beta)x$,
  $(\alpha+\beta)x = \alpha x+\beta x$
  (provided that $|\alpha+\beta| \leq 1$),
  $\alpha(x+y) = \alpha x+\alpha y$
  (provided that $x+y$ exists),
  and $\|\alpha x\| = |\alpha| \|x\|$.
  We also have $d(\alpha x,\alpha y) = \left\| \half[\alpha x-\alpha y] \right\|
  = |\alpha| \left\| \half[x-y] \right\| = |\alpha|d(x,y)$, so in particular
  $\alpha x = \alpha y \Longrightarrow x=y$ for $|\alpha| \neq0$.

  We can now define $B_0 = \bR^{>0} \times S$,
  and define $(\alpha,x) \sim (\beta,y)$
  if $\frac{\alpha}{\alpha+\beta}x = \frac{\beta}{\alpha+\beta}y$.
  It is straightforward to verify using results from the previous
  paragraph that $\sim$ is an equivalence relation, and that the
  following operations are well defined on $B = B_0/{\sim}$ and render it
  a normed vector space over $\bR$:
  \begin{align*}
    & \beta[\alpha,x] =
    \begin{cases}
      [\alpha\beta,x] & \beta > 0 \\
      [-\alpha\beta,-x] & \beta < 0 \\
      [1,0] & \beta = 0
    \end{cases}\\
    & [\alpha,x] + [\beta,y] =
    \left[ \alpha+\beta,\frac{\alpha}{\alpha+\beta}x
      + \frac{\beta}{\alpha+\beta}y \right] \\
    & \|[\alpha,x]\| = \alpha \|x\|.
  \end{align*}
  Our structure $S$ embeds in the unit ball of $B$ via
  $x \mapsto [1,x]$.

  The last assertion now follows immediately.
\end{proof}

When dealing with models of $T_{Bs}$ 
we allow ourselves to omit the halving operation when no
ambiguity may arise.
Thus, for example, we write $x+y+z = t+w$ instead of
$\half \left( \half[x]+\half[y+z] \right) = \half \half[t+w]$, and so on.

We shall now extend this to Banach lattices.
We recall a few definitions from \cite{MeyerNieberg:BanachLattices}:
\begin{dfn}
  \label{dfn:BanLat}
  \begin{enumerate}
  \item An \emph{ordered vector space} $(E,\leq)$ is a vector space
    $E$ equipped with a partial ordering $\leq$, over an
    ordered field $(k,\leq)$,
    satisfying
    \begin{gather*}
      v \leq u
      \quad \Longleftrightarrow \quad v+w \leq u + w
      \quad \Longleftrightarrow \quad \alpha v \leq \alpha u
    \end{gather*}
     for all $v,u,w \in E$ and $\alpha \in k^{>0}$.
  \item An ordered vector space is a \emph{vector lattice} (or a
    \emph{Riesz space}) if
    it is a lattice, i.e., if every two $v,u \in E$ admit a
    least upper bound (or \emph{join}) $v\vee u$ and a greatest lower
    bound (or \emph{meet}) $v\wedge u$.
    In this case we write $|v| = v\vee(-v)$,
    $v^+ = v\vee0$, $v^- = (-v)\vee0$.
  \item \label{item:NormLat}
    A \emph{normed vector lattice} is
    a vector lattice over $\bR$, equipped with
    a norm satisfying $|v| \leq |u| \Longrightarrow \|v\| \leq \|u\|$.
  \item A \emph{Banach lattice} is a complete normed vector lattice.
  \end{enumerate}
\end{dfn}

We shall consider (unit balls of) Banach lattices
in a language augmented with a $1$-Lipschitz function symbol:
\begin{align*}
  \cL_{Bl} & = \cL_{Bs} \cup \{|\cdot|\}.
\end{align*}
Using the function symbol $|\cdot|$ we may define other common
expressions which have the intended interpretations in Banach lattices:
\begin{gather*}
  x^+ = \half[|x|+x], \qquad x^- = \half[|x|-x], \\
  \half[x\vee y]
  = \half \left( \half[x+y] + \left| \half[x-y] \right| \right), \qquad
  \half[x \wedge y]
  = \half \left( \half[x+y] - \left| \half[x-y] \right| \right).
\end{gather*}
On the other hand we cannot expect to define $x \vee y$ or
$x \wedge y$ without halving since the unit ball of a Banach lattice
need not be closed under these operations.

We define $T_{Bl}$ to consist of $T_{Bs}$ along with the following
axioms.
We shall follow the convention (which will be justified later)
that $x \geq 0$ is shorthand for $x = |x|$.
\begin{align*}
  \tag{BL1}
  & | \alpha x | = |\alpha| |x| && \alpha \in [-1,1] \text{ dyadic} \\
  \tag{BL2}
  & \half[|x|+|y|] \geq 0 \\
  \tag{BL3}
  & \|x\| = \bigl\| |x| \bigr\| \leq \bigl\| |x| + |y| \bigr\| \\
  \tag{BL4}
  & |x^+| = x^+ \\
  \tag{BL5}
  & \half[z] - \half[x \vee y] + (\half[z-x])^- + (\half[z-y])^- \geq 0
\end{align*}
(Some halving is omitted from BL3,5.)

\begin{thm}
  If $(E,\leq)$ is a Banach lattice then the unit ball of $E$ is
  closed under the absolute value operation $|\cdot|$ and as a
  $\cL_{Bl}$-structure is a model of $T_{Bl}$.
  Conversely, every model of $T_{Bl}$ is the unit ball of a Banach
  lattice, where the absolute value operation is extended to the
  entire Banach space by
  $|x| = \|x\| \left| \frac{x}{\|x\|} \right|$
  and the order is recovered by
  $x \leq y \Longleftrightarrow x-y = |x-y|$.
\end{thm}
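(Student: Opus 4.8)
The forward implication is routine and I would only sketch it. If $E$ is a Banach lattice then $\||x|\| = \|x\|$ shows that the unit ball is closed under $|\cdot|$, and each of BL1--BL5 holds in $E$. BL3 is just the defining inequality of a lattice norm together with $\|x\| = \||x|\|$; the remaining axioms follow from the standard principle that any inequality between lattice-polynomial expressions which is valid pointwise in $\bR$ is valid in every vector lattice. For BL5 this amounts to checking that $z - (x\vee y) + 2(z-x)^- + 2(z-y)^- \geq 0$ in $\bR$, which one sees by splitting into cases on the relative order of $x$, $y$, $z$.

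For the converse let $M \models T_{Bl}$. Since $T_{Bl}$ contains $T_{Bs}$, the preceding theorem already exhibits $M$ as the closed unit ball of a Banach space $E$, with its vector and norm structure recovered; the remaining task is to produce the lattice order from the symbol $|\cdot|$. First I extend $|\cdot|$ to all of $E$ by $|x| = \|x\|\,\bigl|\,x/\|x\|\,\bigr|$. Because $|\cdot|$ is $1$-Lipschitz, the dyadic homogeneity BL1 upgrades by continuity to all real $\alpha \in [-1,1]$, and the homogeneous extension then yields $|\alpha x| = |\alpha|\,|x|$ for every $\alpha \in \bR$; in particular $|-x| = |x|$. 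I then set $P = \{x \in E : x = |x|\}$ and declare $x \leq y$ iff $y - x \in P$.

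The heart of the proof is to check that $(E,\leq)$ is a vector lattice whose absolute value is the given $|\cdot|$. That $P$ is a proper convex cone is quick: closure under positive scalars is homogeneity, closure under addition is BL2 (after scaling the summands into the unit ball, membership in $P$ being scale-invariant), and $P \cap (-P) = \{0\}$ holds because $x = |x|$ and $-x = |-x| = |x|$ force $x = -x$. Hence $\leq$ is a vector-space partial order. Using the term-definitions of $x^{\pm}$ and $x\vee y$ introduced just before $T_{Bl}$, a short computation gives $x\vee y - x = (x-y)^- = (y-x)^+$, which lies in $P$ by BL4, so $x\vee y$ dominates $x$ and, by symmetry, $y$; and BL5 is exactly what makes it the \emph{least} upper bound, since whenever $z \geq x$ and $z \geq y$ the two negative-part terms vanish and BL5 collapses to $z - (x\vee y) \geq 0$. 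The meet is dual, via $x\wedge y = -((-x)\vee(-y))$, and $x\vee(-x) = |x|$ identifies the lattice absolute value with the symbol $|\cdot|$.

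Finally, completeness of $E$ is free, as it is already a Banach space, and the lattice-norm inequality follows from BL3: if $|v| \leq |u|$ then $w := |u| - |v| \in P$, so $w = |w|$ and BL3 yields $\|v\| = \||v|\| \leq \||v| + |w|\| = \||u|\| = \|u\|$. I expect the only real friction to be bookkeeping --- tracking the suppressed halvings and the passage between the unit ball, where the axioms live, and the full space $E$, where the lattice computations are most naturally carried out --- which is justified throughout by the degree-one homogeneity of every expression involved and the scale-invariance of the cone $P$. The one genuinely indispensable axiom is BL5, which is precisely what upgrades the formula-defined join from an upper bound to the least upper bound.
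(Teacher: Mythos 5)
Your proposal is correct and takes essentially the same route as the paper's own proof: extend $|\cdot|$ homogeneously, recover the order from the set of fixed points of $|\cdot|$, get the join as an upper bound from BL4 and as the \emph{least} upper bound from BL5, and get the lattice-norm inequality from BL3 applied to $|v|$ and $|u|-|v|$. The only cosmetic difference is that you package the order axioms via the positive cone $P$ (deriving antisymmetry from $|-x|=|x|$ where the paper invokes BL3, and transitivity from closure of $P$ under addition via BL2), which changes nothing of substance.
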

\begin{proof}
  The first statement is immediate so we only prove the converse.
  If $(E_1,|\cdot|) \models T_{Bl}$ then $E$ is the unit ball of a
  Banach space $E$.
  By BL1 we may extend the absolute value operation to all
  of $E$ as in the statement of the Theorem and have
  $|\alpha x| = |\alpha| |x|$ for all $\alpha \in \bR$, $x \in E$.
  By BL2 $\bigl| |x| + |y| \bigr| = |x| + |y|$.

  Define a relation $\leq$ on $E$ as in the statement.
  Clearly $x-x = 0 = |0|$ whereby $x \leq x$.
  If $x \leq y \leq x$
  then $0 = |x-y| + |y-x|$ and by BL3
  $\|x-y\| \leq 0$, i.e., $x=y$.
  If $x \leq y \leq z$ then
  $z-x = |y-x| + |z-y|$ whereby
  $z-x = |z-x|$, i.e.,
  $x \leq z$.
  Thus $\leq$ is an ordering and it is now clear that it renders
  $E$ an ordered vector space.
  In particular, $x \geq 0 \Longleftrightarrow x = |x|$,
  justifying our notation.

  Define
  $x \vee y = \half[x+y] + \left| \half[x-y] \right|$,
  $x \wedge y = \half[x+y] + \left| \half[x-y] \right|$.
  Then
  $x\vee y - x = \left( \half[y-x] \right)^+$,
  and by BL4
  $x \vee y \geq x$.
  The inequalities $x\vee y \geq y$
  and $x,y \geq x\wedge y$ are proved similarly.
  Assume now that
  $z \geq x,y$.
  Then $(\half[z-x])^- = \half (z-x)^-$ and similarly for $z-y$,
  and by BL5 $z \geq x \vee y$.
  Thus $x \vee y$ is the join of $x$ and $y$.
  It is not difficult to check that
  $x\wedge y = -( (-x) \vee (-y) )$ is the meet of $x$ and $y$,
  so $(E,\leq)$ is a Riesz space.
  Immediate calculations also reveal that
  $|x| = x \vee (-x)$, $x^+ = x\vee 0$, $x^- = (-x)\vee 0$.

  Finally, if $|x| \leq |y|$,
  applying BL3 to $|x|$ and $|y|-|x|$
  we obtain
  $\| x \| = \bigl\| |x| \bigr\| \leq \bigl\| |y| \bigr\|
  = \|y\|$.
  This completes the proof.
\end{proof}

There is nothing sacred in our choice of language, and some may prefer
to name the operations $\half[x \vee y]$, $\half[x \wedge y]$ instead
of the absolute value, thus working in
$\cL_{Bl}' = \cL_{Bs} \cup \{\half[x \vee y],\half[x \wedge y]\}$.
We have seen that $\half[x \vee y]$, $\half[x \wedge y]$
can be written as terms using $|\cdot|$,
so every atomic $\cL_{Bl}'$-formula can be translated to an atomic
$\cL_{Bl}$-formula.
The converse is not true, but we may still write
$\half[ |x| ] = \half[x\vee(-x)]$.
An easy induction on the complexity of terms yields that every atomic
$\cL_{Bl}$-formula can be expressed as an atomic $\cL_{Bl}'$-formula
up to a multiplicative factor of the form $2^k$,
and therefore as a quantifier-free $\cL_{Bl}'$-formula.
We may therefore say that the two languages are quantifier-free
interpretable in one another.
By \fref{thm:BiInterpretableClasses},
model theoretic properties such as
axiomatisability, quantifier elimination, model
completeness, and so on, transfer from any class of
Banach lattices viewed as structures in one language to the same class
viewed as structures in the other.
One could also formalise Banach lattices by naming the operation
$x^+$ (or $x^-$), and the same argument would hold.

Since we are dealing specifically with Nakano spaces, we may consider
them in the language
$\cL_{Bl}^\Theta = \cL_{Bl} \cup \{\Theta\}$
where $\Theta$ will interpret
the modular functional.
However, there is a small caveat here: the modular functional
$\Theta_{p(\cdot)}$ is indeed uniformly continuous on the unit ball of
$L_{p(\cdot)}(X,\fB,\mu)$, but its precise uniform
continuity modulus depends on the essential bound of the
exponent function $p$.

\begin{conv}
  We fix here, once and for all, a \emph{uniform} bound
  $1 \leq r < \infty$ on $p$.
  Thus all Nakano spaces considered henceforth will be of the form
  $L_{p(\cdot)}(X,\fB,\mu)$ where $p\colon X \to [1,r]$.
\end{conv}

Let $K \subseteq [1,r]$ be compact.
we shall consider the following classes of structures:
\begin{align*}
  \cN^\Theta_K
  & = \{\text{$\cL_{Bl}^\Theta$-structures isomorphic to some 
    $(L_{p(\cdot)}(X,\fB,\mu),\Theta_{p(\cdot)})$ with $\essrng p = K$}\}, \\
  \cN_K
  & = \{N\rest_{\cL_{Bl}}\colon N \in \cN^\Theta_K\} \\
  & = \{\text{$\cL_{Bl}$-structures isomorphic to some 
    $L_{p(\cdot)}(X,\fB,\mu)$ with $\essrng p = K$}\}, \\
  \cN^\Theta_{\subseteq K}
  & = \bigcup \{ \cN^\Theta_{K'}\colon \emptyset \neq K' \subseteq K \text{ compact}\}, \\
  & = \{\text{$\cL_{Bl}^\Theta$-structures isomorphic to some 
    $(L_{p(\cdot)}(X,\fB,\mu),\Theta_{p(\cdot)})$ with $\essrng p \subseteq K$}\}, \\
  \cN_{\subseteq K}
  & =  \bigcup \{ \cN_{K'}\colon \emptyset \neq K' \subseteq K \text{ compact}\}
  = \{N\rest_{\cL_{Bl}}\colon N \in \cN^\Theta_{\subseteq K}\} \\
  & = \{\text{$\cL_{Bl}$-structures isomorphic to some 
    $L_{p(\cdot)}(X,\fB,\mu)$ with $\essrng p \subseteq K$}\}.
\end{align*}
(Of course, strictly speaking, these are the classes of the unit balls
rather than of entire spaces.)

Given the uniform bound we fixed before, the largest classes we may
consider are $\cN_{\subseteq[1,r]}$ and $\cN^\Theta_{\subseteq[1,r]}$,
respectively.

\begin{fct}
  \label{fct:NakanoElemClass}
  Each of the classes
  $\cN^\Theta_K$, $\cN_K$, $\cN^\Theta_{\subseteq K}$
  and $\cN_{\subseteq K}$
  is elementary.
\end{fct}
\begin{proof}
  This is just \cite[Proposition~3.8.2]{Poitevin:PhD}.
  While the case of $\cN^\Theta_{\subseteq K}$ is not mentioned there explicitly
  all the ingredients are there (in particular, as each class
  of the form $\cN^\Theta_K$ is closed under ultraroots, so are classes of
  the form $\cN^\Theta_{\subseteq K}$).
\end{proof}

We may impose additional requirement, such as the dimension being
greater than $1$, or the lattice (equivalently, the underlying measure
space) being atomless.
These are first order conditions as well.
For the first one we would like to say that there are functions $x$
and $y$ such that $\|x\| = \|y\| = 1$ and $|x|\wedge|y| = 0$,
i.e.:
\begin{gather*}
  \inf_{x,y}
  \neg\|x\|
  \vee
  \neg\|y\|
  \vee
  \bigl\| |x+y| - |x-y| \bigr\| = 0.
\end{gather*}
Similarly, atomlessness is expressible by:
\begin{gather*}
  \sup_x \inf_y
  \bigl|  \| y \| - \half[\| x \|] \bigr|
  \vee
  \bigl\| |x| - | x - 2y | \bigr\| = 0.
\end{gather*}

The classes of Nakano spaces of dimension at least $2$ will be denoted
$2\cN_K$, $2\cN^\Theta_K$, etc.
The classes of atomless Nakano spaces will be denoted
$\cA\cN_K$, $\cA\cN^\Theta_K$, etc.

\begin{fct}
  \label{fct:FixedEssRng}
  Assume $L_{p(\cdot)}(X,\fB,\mu) \in 2\cN_K$ ($\in 2\cN_{\subseteq K}$).
  Then $\essrng p = K$ ($\subseteq K$).
\end{fct}
\begin{proof}
  This is a consequence of \cite[Proposition~3.4.4]{Poitevin:PhD},
  which can be also obtained as a special case of
  \fref{cor:RigidEssRng}.
\end{proof}

\begin{fct}
  The theory $\Th(\cA\cN^\Theta_K)$ eliminates quantifiers.
  It follows that it is complete, as is $\Th(\cA\cN_K)$.
\end{fct}
\begin{proof}
  \cite[Theorem~3.9.4]{Poitevin:PhD}.
\end{proof}

\begin{fct}
  Let $K \subseteq (1,\infty)$ be compact (so $\min K > 1$).
  Then the theory $\Th(\cA\cN_K)$ is stable.
\end{fct}
\begin{proof}
  \cite[Theorem~3.10.9]{Poitevin:PhD}.
\end{proof}

In fact, we are cheating here a little, as Poitevin proved his results
in a somewhat different language.
He follows the approach described in the paragraphs following
\cite[Example~4.5]{BenYaacov-Usvyatsov:CFO}, viewing a Banach space $N$ as
multi-sorted structure consisting of a sort
$N_m = \bar B(0,m)$ for each $0 < m < \omega$.
The corresponding language for Banach lattices, which we may denote
here by $\cL_{Bl,\omega}$, consists of the obvious embedding mappings
between sorts, plus multiplication by (say, rational) scalars and the
binary operations $+$, $\wedge$ and $\vee$ going from sorts or pairs of sorts
to an appropriate target sort (e.g., $+\colon N_m\times N_k \to N_{m+k}$, or
$\half x\colon N_2 \to N_1$).
The predicate symbols norm and distance can have values greater than
one, but they are still bounded on each sort and thus still fit in the
framework of continuous logic.
Similarly, one can define
$\cL_{Bl,\omega}^\Theta$ as
$\cL_{Bl,\omega}$ along with a
predicate symbol $\Theta$ on each sort, and again in every Nakano space
$\Theta$ is uniformly continuous and bounded on each sort.

It will be convenient to notice that even in this approach,
multiple sorts are not required.
Since all the sorts $N_m$ of a Banach space stand in a natural
bijection with the unit ball sort $N_1$ via dilation
$x \mapsto \frac{x}{m}$, we may interpret the entire language
$\cL_{Bl,\omega}$ on the single sort $N_1$.
Thus, for example, instead of $+\colon N_m \times N_k \to N_{m+k}$
we would have $+_{m,k}\colon N_1 \times N_1 \to N_1$ sending
$(\frac{x}{m},\frac{y}{k}) \mapsto \frac{x+y}{m+k}$.
Viewing $N_1$ as itself, rather than as a scaled copy of $N_m$,
$N_k$ or $N_{m+k}$, obtain the convex combination operation
$x +_{m,k} y = \frac{mx+ky}{m+k}$.
In particular, $x +_{1,1} y = \half[x+y]$.

Viewed in this way, $\cL_{Bl}$ ($\cL_{Bl}^\Theta$)
is a sub-language of $\cL_{Bl,\omega}$ ($\cL_{Bl,\omega}^\Theta$).
It is also fairly immediate to check that every atomic
$\cL_{Bl,\omega}$-formula agrees (in any Banach lattice)
with a quantifier-free $\cL_{Bl}$-formula.
Thus $\cL_{Bl}$ and $\cL_{Bl,\omega}$ are quantifier-free
bi-interpretable, in the sense of \fref{apx:Interpretations},
on the class of Banach lattices.
By \fref{thm:BiInterpretableClasses}, model theoretic properties such
as
elementarity, model completeness, quantifier elimination, and so
on, transfer between classes of Banach lattices formalised
in $\cL_{Bl}$ and in $\cL_{Bl,\omega}$.
(The reader may worry that in the single sorted versions of
$\cL_{Bl}$ and $\cL_{Bl}^\Theta$ we may construct terms and
formulae which do not come from the multi-sorted version due to sort
discrepancy, for example the term
$x +_{m,k} (y +_{\ell,t} z)$ where $k \neq \ell+t$.
This term, however, agrees with the ``legitimate'' term
$x +_{m(\ell+t),k(\ell+t)} (y +_{k\ell,kt} z)$ in every Banach
lattice.
In this fashion we can translate every term or quantifier-free formula
of $\cL_{Bl,\omega}$ to one which would make sense in the multi-sorted
version, so this is not a true problem.)

Let us now consider the case of
$\cL_{Bl}^\Theta \subseteq \cL_{Bl,\omega}^\Theta$.
The language $\cL_{Bl,\omega}^\Theta$
contains for every $m$ a predicate symbol
$\Theta_m\colon N_1 \to \bR^+$,
$\Theta_m(x) = \Theta(mx)$ (the range of $\Theta_m$ is
bounded and the bound depends only on $r$ and $m$),
while $\cL_{Bl}^\Theta$ only contains the first one of those,
$\Theta = \Theta_1$.
Unlike the predicates for norm and distance on $N_m$ which are
homogeneous and can therefore be
recovered from their counterparts on $N_1$ by simple dilation,
in order to recover $\Theta_m$ from $\Theta_1$ a little more work is
required.
Our argument here is very close to the proof of
\cite[Lemma~3.4.1]{Poitevin:PhD}.
Let us first recall a version of the Stone-Weierstrass Density
Theorem:
\begin{fct}
  Let $X$ be a compact Hausdorff space and let $\sA \subseteq \cC(X,\bR)$ be a
  sub-algebra which separates points and vanishes nowhere
  (i.e., for each $x \in X$ there is $f \in \sA$ such that
  $f(x) \neq 0$).
  Then $\sA$ is dense in $\cC(X,\bR)$.
\end{fct}

\begin{lem}
  For every $0 < m \in \bN$
  there exists a quantifier-free $\cL_{Bl}^\Theta$-definable predicate
  $\varphi_m(x)$ which coincides with
  $v \mapsto \Theta(mv)$ on the unit ball of every Nakano space
  $N = L_{p(\cdot)}(X,\fB,\mu)$ (with $\essrng(p) \subseteq [1,r]$).
\end{lem}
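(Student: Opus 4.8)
The plan is to recover $v \mapsto \Theta(mv)$ from the single predicate $\Theta = \Theta_1$ by writing it as a uniform limit of finite linear combinations of the values $\Theta(\lambda v)$ for scalars $\lambda \in (0,1]$, each of which \emph{is} a quantifier-free $\cL_{Bl}^\Theta$-term evaluated under $\Theta$ (note that $\lambda v$ lies in the unit ball whenever $v$ does and $|\lambda| \le 1$). Everything reduces to a single approximation statement on the fixed compact interval $[1,r]$.

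First I would record the basic computation. For $v$ in the unit ball of $N = L_{p(\cdot)}(X,\fB,\mu)$ and any $\lambda \ge 0$ one has $\Theta_{p(\cdot)}(\lambda v) = \int \lambda^{p(x)} |v(x)|^{p(x)}\, d\mu$. Pushing the finite measure $|v|^{p}\,d\mu$ forward along $x \mapsto p(x)$ produces a Borel measure $\sigma_v$ on $\essrng p \subseteq [1,r]$ with
\[
  \int_{[1,r]} h(t)\, d\sigma_v(t) = \int_X h(p(x))\,|v(x)|^{p(x)}\, d\mu, \qquad \sigma_v([1,r]) = \Theta(v) \le 1,
\]
the bound $\Theta(v) \le 1$ holding because $s \mapsto \Theta(sv)$ is nondecreasing and equals $1$ at $s = \|v\|^{-1} \ge 1$. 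Taking $h(t) = \lambda^t$ gives $\Theta(\lambda v) = \int_{[1,r]} \lambda^t\, d\sigma_v(t)$, and the target quantity is exactly $\Theta(mv) = \int_{[1,r]} m^t\, d\sigma_v(t)$.

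The crux is then a Stone--Weierstrass argument on $[1,r]$. The linear span $\Span\{\, t \mapsto \lambda^t : \lambda \in (0,1]\,\}$ is a subalgebra of $\cC([1,r],\bR)$: it is closed under multiplication since $\lambda_1^{t}\lambda_2^{t} = (\lambda_1\lambda_2)^t$, it contains the constants (take $\lambda = 1$), and it separates points (take $\lambda = \tfrac12$, whose exponential is injective on $[1,r]$). By the density theorem quoted above it is dense, so for each $\varepsilon > 0$ I may choose finitely many dyadic $\lambda_j \in (0,1]$ and reals $a_j$ with $\sup_{t \in [1,r]} | m^t - \sum_j a_j \lambda_j^t | \le \varepsilon$. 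Integrating this against $\sigma_v$ and using $\sigma_v([1,r]) \le 1$ yields
\[
  \Bigl| \Theta(mv) - \sum_j a_j\, \Theta(\lambda_j v) \Bigr| \le \varepsilon
\]
simultaneously for every $v$ in the unit ball of every space with $\essrng p \subseteq [1,r]$. Since the $\lambda_j$ are dyadic, each $\lambda_j v$ is a genuine $\cL_{Bl}$-term and $\sum_j a_j \Theta(\lambda_j v)$ is a quantifier-free $\cL_{Bl}^\Theta$-formula; letting $\varepsilon \to 0$ exhibits $\varphi_m$ as the desired quantifier-free definable predicate, with $\varphi_m(v) = \Theta(mv)$ on every such unit ball (for $m = 1$ one simply takes $\varphi_1 = \Theta$).

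The step I expect to be the real content is the uniformity of the estimate across the whole class: this works precisely because, whatever the space and whatever $v$, the auxiliary measure $\sigma_v$ is supported on the \emph{fixed} compact set $[1,r]$ and has total mass $\le 1$, so a single sup-norm approximation of $t \mapsto m^t$ on $[1,r]$ controls the error everywhere at once. Verifying the subalgebra and point-separation hypotheses of Stone--Weierstrass is routine, and the passage from the dyadic terms $\lambda_j v$ and the continuous combination $\sum_j a_j \Theta(\cdot)$ to a legitimate quantifier-free definable predicate is immediate in continuous logic.
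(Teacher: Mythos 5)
Your proof is correct and follows essentially the same route as the paper: a Stone--Weierstrass approximation of $t \mapsto m^t$ on the fixed compact $[1,r]$ by linear combinations of $t \mapsto \lambda^t$ with $\lambda$ dyadic in $(0,1]$, integrated against a pushforward measure of total mass $\le 1$ to get the uniform bound. The only (cosmetic) difference is that you push forward $|v|^p\,d\mu$ directly, whereas the paper first reduces to $v = \chi_A$ via a density change and pushes forward $\mu\rest_A$; note also that to get \emph{dyadic} $\lambda_j$ you should apply Stone--Weierstrass to the span over dyadic $\lambda$ (itself a subalgebra, since dyadics are closed under products) rather than extract dyadic approximants from the larger span.
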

\begin{proof}
  Let $\sA\subseteq \cC([1,r],\bR)$
  consist of all functions of the form
  $f(x) = \sum_{i<n} a_k2^{-kx}$, where $n \in \bN$
  and $a_k \in \bR$.
  Then $\sA$ satisfies the assumptions of the
  Stone-Weierstrass Density Theorem cited above,
  and is therefore dense in $\cC([0,1],\bR)$.

  Let us fix $\varepsilon > 0$.
  By the previous paragraph there is a function of the form
  $f(x) = \sum_{k<n} a_k2^{-kx} \in \sA$
  which is $\varepsilon$-close to $g(x) = m^x$ on $[1,r]$.
  Then $\varphi_{m,\varepsilon}(v) = \sum_{k<n} a_k\Theta(2^{-k}v)$
  is a quantifier-free definable predicate in $\cL_{Bl}^\Theta$.

  Now assume that $v \in N = L_{p(\cdot)}(X,\fB,\mu)$,
  $\|v\| \leq 1$.
  Passing to $|v|$ we may assume that $v \geq 0$ and up
  to a density change we may assume that $v = \chi_A$
  for some $A \in \fB$.
  Then $\|v\| \leq 1$ implies that $\mu(A) \leq 1$.
  Consider the restriction $p\rest_A \colon A \to[1,r]$,
  and let $\nu$ be the image measure of $\mu\rest_A$
  under this mapping.
  For every $\alpha > 0$ we have
  \begin{gather*}
    \Theta(\alpha v)
    = \int_A \alpha^{p(x)}\,d\mu
    = \int_{[1,r]} \alpha^x\,d\nu,
  \end{gather*}
  whereby
  \begin{align*}
    \bigl| \Theta(mv) - \varphi_{m,\varepsilon}(v) \bigr|
    & = \left|
      \int_{[1,r]} m^x\,d\nu  - \sum_{k<n} \int_{[1,r]} 2^{-kx}\,d\nu
    \right| \\
    & \leq \int_{[1,r]} |f(x) - g(x)|\, d\nu
    \leq \varepsilon \mu(A) \leq \varepsilon.
  \end{align*}
  Since this can be done for every $\varepsilon > 0$ the statement is proved.
\end{proof}

Thus $\cL_{Bl}^\Theta$ and $\cL_{Bl,\omega}^\Theta$ are also
quantifier-free bi-interpretable for Nakano spaces,
so axiomatisability, quantifier elimination and so on
transfer between the two formalisms.
This also means that once we show that the modular functional
of a Nakano space is $\cL_{Bl}$-definable in the unit ball
(e.g., \fref{thm:DefinableModular}), it follows that
it is $\cL_{Bl}$-definable on the $m$-ball for every $m$.

\section{Definability of the modular functional}
\label{sec:DefinableModular}

This section contains the main model theoretic results of this paper.
We start with the definability result.

\begin{thm}
  \label{thm:DefinableModular}
  The modular functional $\Theta$ is uniformly $\cL_{Bl}$-definable
  in $2\cN^\Theta_{\subseteq[1,r]}$.
  Moreover, it is both uniformly $\inf$-definable and $\sup$-definable
  and can be used
  to axiomatise $2\cN^\Theta_{\subseteq[1,r]}$
  modulo the axioms for $2\cN_{\subseteq[1,r]}$.

  More precisely:
  \begin{enumerate}
  \item There exists a $\cL_{Bl}$-definable predicate
    $\varphi_\Theta(x)$ such that
    $(N,\Theta) \models \Theta(x) = \varphi_\Theta(x)$
    for all $(N,\Theta) \in 2\cN^\Theta_{\subseteq[1,r]}$.
  \item There quantifier-free $\cL_{Bl}$-formulae
    $\psi_n(x,\bar y_n)$ and $\chi_n(x,\bar z_n)$ such that in all
    Nakano spaces of dimension at least two:
    $$\Theta(x) = \varphi_\Theta(x)
    = \lim_{n\to\infty} {\inf}_{\bar y_n} \psi_n(x,\bar y_n)
    = \lim_{n\to\infty} {\sup}_{\bar z_n} \chi_n(x,\bar z_n),$$
    each of the limits converging uniformly and at a uniform rate.
  \item The theory $\Th(2\cN^\Theta_{\subseteq[1,r]})$ is equivalent to
    $\Th(2\cN_{\subseteq[1,r]}) \cup \{\Theta(x) = \varphi_\Theta(x)\}$.
  \end{enumerate}
\end{thm}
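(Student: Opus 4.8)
The plan is to separate the purely logical content (items (i) and (iii)) from the analytic content (item (ii)). First I would establish that the $\cL_{Bl}$-reduct already determines $\Theta$: applying \fref{cor:RigidModular} to the identity map $\id\colon N\to N$ shows that if $(N,\Theta)$ and $(N,\Theta')$ are two members of $2\cN^\Theta_{\subseteq[1,r]}$ sharing the same underlying Banach lattice $N$, then $\Theta=\Theta'\circ\id=\Theta'$. Since both $2\cN^\Theta_{\subseteq[1,r]}$ and its reduct class $2\cN_{\subseteq[1,r]}$ are elementary (\fref{fct:NakanoElemClass}), this uniqueness, fed into the continuous-logic Beth-style definability criterion from \fref{apx:ContModTh}, yields a single $\cL_{Bl}$-definable predicate $\varphi_\Theta$ with $\Theta=\varphi_\Theta$ throughout the class, which is item (i). Item (iii) is then formal: every member of $2\cN^\Theta_{\subseteq[1,r]}$ satisfies $\Theta(x)=\varphi_\Theta(x)$ and reduces into $2\cN_{\subseteq[1,r]}$, while conversely any model of $\Th(2\cN_{\subseteq[1,r]})$ is, by elementarity, a genuine Nakano space of dimension at least two, carrying an honest modular $\Theta$; by item (i) that modular equals $\varphi_\Theta$, so imposing $\Theta=\varphi_\Theta$ recovers exactly the expanded class.

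For the explicit inf- and sup-definitions of item (ii) I would work directly with the analytic meaning of $\Theta$. The two basic ingredients are that $\Theta$ is additive across band-disjoint elements, $\Theta(\sum_i y_i)=\sum_i\Theta(y_i)$ when $|y_i|\wedge|y_j|=0$, and that on a piece $y$ with $\|y\|\le 1$ whose exponent lies in $[t,s]$ on its support one has $\|y\|^{s}\le\Theta(y)\le\|y\|^{t}$ (because $\|y\|^{p(\cdot)}$ interpolates between these powers for $\|y\|\le 1$). The local exponent range is itself visible in $\cL_{Bl}$ through norms of disjoint sums: for band-disjoint $u,v$ supported where $p\approx p_0$ one has $\|u+v\|^{p_0}\approx\|u\|^{p_0}+\|v\|^{p_0}$, so comparing $\|y'\|$, $\|y''\|$ and $\|y'+y''\|$ for equal-norm splittings $y=y'+y''$ reads off $p_0$ up to an error controlled by the oscillation of $p$ on $\supp(y)$. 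All of these quantities are norms of lattice-polynomial combinations of the variables, hence quantifier-free $\cL_{Bl}$-terms.

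I would then let $\bar y_n$ range over an $n$-fold band-disjoint decomposition of $x$ together with the auxiliary splittings witnessing the local exponents, and set $\psi_n$ to be $\sum_i\|y_i\|^{\hat t_i}$, where $\hat t_i$ is the splitting-ratio estimate of the exponent on $y_i$, augmented by penalty terms charging for exponent oscillation and for the defect $d\bigl(\sum_i y_i,\,x\bigr)$; taking $\inf_{\bar y_n}$ selects near-constant decompositions on which $\|y_i\|^{\hat t_i}\approx\Theta(y_i)$, so that $\inf_{\bar y_n}\psi_n$ lies within $O(1/n)$ of $\sum_i\Theta(y_i)=\Theta(x)$, and $\chi_n$ is built symmetrically from the lower estimates $\|y_i\|^{\hat s_i}$ with $\sup_{\bar z_n}$ approaching $\Theta(x)$ from the other side. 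The main obstacle is uniformity: I must show both limits converge to $\Theta$ at a single rate valid simultaneously for every essential range $K\subseteq[1,r]$, i.e.\ that an arbitrary element of an arbitrary Nakano space of dimension at least two admits, for each $n$, a disjoint decomposition whose pieces have exponent-oscillation $O(1/n)$ and on which the splitting-ratio estimate is accurate. Here I would use compactness of $[1,r]$ and the uniform continuity of $(\alpha,p)\mapsto\alpha^{p}$ to convert oscillation bounds into value bounds, and the room afforded by $\dim N\ge 2$ (the same feature exploited in \fref{lem:NakanoEmbedSets}) to guarantee the decompositions exist; that the two one-sided limits meet at the common value $\Theta$ is precisely what the rigidity of \fref{thm:EmbedFactor} underwrites. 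Once (ii) is in place it furnishes an independent proof of (i), with $\varphi_\Theta=\lim_n\inf_{\bar y_n}\psi_n$.
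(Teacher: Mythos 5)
Your treatment of items (i) and (iii) coincides with the paper's: uniqueness of the expansion via \fref{cor:RigidModular} is fed into Beth's theorem (\fref{thm:Beth}) to produce $\varphi_\Theta$, and (iii) is then formal. For item (ii), however, the paper does something much softer than what you propose: by \fref{cor:RigidModular} again, $\varphi_\Theta$ is \emph{constant} in $2\cN_{\subseteq[1,r]}$ in the sense of \fref{dfn:IncDecFormula} (its value at a tuple is unchanged under passage to an extension in the class), and \fref{thm:MonForm} then says that an increasing definable predicate is $\sup$-definable and a decreasing one is $\inf$-definable, so a constant one is both. No explicit $\psi_n$, $\chi_n$ are ever constructed; the uniform rate comes out of the compactness argument inside \fref{thm:MonForm}.

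Your explicit route for (ii) has a genuine gap. The exponent-detection mechanism you describe --- comparing $\|y'\|$, $\|y''\|$, $\|y'+y''\|$ for equal-norm band-disjoint splittings $y=y'+y''$ --- presupposes that such splittings exist, but $2\cN_{\subseteq[1,r]}$ contains Nakano spaces with atoms (only the classes $\cA\cN$ are atomless), and $\dim N\ge 2$ does not let you split an atom. On a purely atomic piece every witness tuple incurs your oscillation/defect penalties, and nothing in the sketch shows that $\inf_{\bar y_n}\psi_n$ then lands on $\Theta(x)$ rather than systematically to one side of it. The paper itself points out that the quantifier-free $\cL_{Bl}$-type of a single positive element is determined by its norm alone, so any quantifier-free detection of the exponent must exploit the interaction of at least two disjoint elements in the style of \fref{lem:NakanoEmbedSets}; your sketch gestures at this but does not make it precise, and the claimed uniform rate is asserted rather than proved. (The approximation $\Theta(f)\approx\sum_k\|f_k\|^{p_k}$ with uniform rate $C/\sqrt n$ is \fref{lem:LocalModularApproximation} in the appendix, but there the decomposition is taken with respect to the \emph{known} exponent function, which is exactly the data a quantifier-free formula does not have access to. The appendix is explicitly described as superseded for this purpose.) Either supply the missing exponent-detection argument in full, including the atomic case, or use \fref{thm:MonForm}, which gives item (ii) for free from the constancy of $\varphi_\Theta$.
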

\begin{proof}
  By \fref{cor:RigidModular} every $N \in 2\cN_{\subseteq[1,r]}$ admits at most
  one expansion to $(N,\Theta) \in 2\cN^\Theta_{\subseteq[1,r]}$.
  As these are elementary classes, one can apply
  \fref{thm:Beth} (Beth's theorem for continuous logic) in order to
  obtain $\varphi_\Theta$.

  Using \fref{cor:RigidModular} again we see that $\varphi_\Theta$ is
  constant in $2N_{\subseteq[1,r]}$ (see \fref{dfn:IncDecFormula}).
  By \fref{thm:MonForm} it is both $\inf$-definable and
  $\sup$-definable there.

  The last item is immediate.
\end{proof}

\begin{cor}
  \label{cor:DefModK}
  For a fixed compact $K \subseteq [1,r]$, the modular functional is
  uniformly $\cL_{Bl}$-definable in $\cN^\Theta_K$.

  In particular the modular functional is $\cL_{Bl}$-definable in
  every Nakano Banach lattice.
\end{cor}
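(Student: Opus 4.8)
The plan is to derive the corollary from \fref{thm:DefinableModular} by isolating the single case the theorem does not cover --- spaces of dimension less than two --- and observing that this case is forced to be especially simple. The key point is that whenever the essential range $K$ contains two distinct points, every space in $\cN^\Theta_K$ is automatically of dimension at least two, so the theorem already applies; the only genuinely new situation is $K$ a singleton, where the modular functional collapses to a power of the norm.

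First I would split into cases according to the cardinality of $K$. If $|K| \geq 2$, pick $a < b$ in $K$; by definition of the essential range, for small $\varepsilon$ the sets $\{x : |p(x) - a| < \varepsilon\}$ and $\{x : |p(x) - b| < \varepsilon\}$ both have positive measure and are disjoint, yielding two linearly independent characteristic functions and hence $\dim N \geq 2$ for every $N \in \cN^\Theta_K$. Thus $\cN^\Theta_K \subseteq 2\cN^\Theta_{\subseteq[1,r]}$, and the definable predicate $\varphi_\Theta$ produced by \fref{thm:DefinableModular} witnesses uniform $\cL_{Bl}$-definability on $\cN^\Theta_K$ directly.

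The remaining case is $K = \{p_0\}$, where $\essrng p = \{p_0\}$ forces $p = p_0$ almost everywhere, so every $N \in \cN^\Theta_K$ is a classical $L_{p_0}$ space. Here a direct computation gives $\Theta(f) = \int |f|^{p_0}\,d\mu = \|f\|^{p_0}$, so the $\cL_{Bl}$-definable predicate $x \mapsto \|x\|^{p_0}$ (a continuous function of the norm, which is a symbol of $\cL_{Bl}$) defines $\Theta$ uniformly, with no restriction on dimension. Combining the two cases gives uniform $\cL_{Bl}$-definability of $\Theta$ throughout $\cN^\Theta_K$.

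Finally, for the ``in particular'' clause, I would note that for an arbitrary Nakano Banach lattice $N = L_{p(\cdot)}(X,\fB,\mu)$ the exponent is essentially bounded, so $K = \essrng p$ is a compact subset of $[1,r]$ and $N \in \cN^\Theta_K$; the first assertion then yields $\cL_{Bl}$-definability of $\Theta$ in $N$. The only real subtlety is the gap between the theorem's dimension hypothesis and the corollary's generality, and this is precisely what the singleton analysis resolves, since dimension below two can occur only when $K$ is a single point.
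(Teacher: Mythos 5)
Your proposal is correct and follows essentially the same route as the paper: dispose of the singleton case via $\Theta(f)=\|f\|^{p_0}$, and otherwise note that $\cN^\Theta_K = 2\cN^\Theta_K \subseteq 2\cN^\Theta_{\subseteq[1,r]}$ so that \fref{thm:DefinableModular} applies. You merely spell out more explicitly why $|K|\geq 2$ forces dimension at least two (one should take finite-measure subsets of the two sets you exhibit, which strict localisability permits), a step the paper leaves implicit.
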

\begin{proof}
  If $K = \{p_0\}$ is a single point, we have $\Theta(f) = \|f\|^{p_0}$.
  Otherwise
  $\cN^\Theta_K = 2\cN^\Theta_K
  \subseteq 2\cN^\Theta_{\subseteq[1,r]}$
  and we can apply
  \fref{thm:DefinableModular}.
\end{proof}

We have shown that naming the modular functional does not add
structure.
Still, in the case of an atomless Nakano space naming $\Theta$ does give
something, namely quantifier elimination.
It is clear that without $\Theta$ quantifier elimination would be
impossible: the complete $\cL_{Bl}^\Theta$-type of a function contains,
among other information, the essential range of $p$ on its support,
and there is no way of recovering this information from the
quantifier-free $\cL_{Bl}$-type of a single positive function, as it
is determined by its norm alone.

A next-best would be to obtain model completeness.
Indeed, all the work for obtaining it is already done.

\begin{thm}
  For every compact $K \subseteq [1,r]$ the (theory of the) class $\cA\cN_K$
  is model complete.
\end{thm}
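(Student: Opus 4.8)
The plan is to prove model completeness by transporting the problem to the richer language $\cL_{Bl}^\Theta$, in which quantifier elimination is already available. Recall that a theory in continuous logic is model complete precisely when every embedding between its models is elementary, so it suffices to show that an arbitrary $\cL_{Bl}$-embedding between two models of $\Th(\cA\cN_K)$ is $\cL_{Bl}$-elementary. The key point is that over atomless Nakano spaces the $\cL_{Bl}$-structure already pins down the $\Theta$-structure and, moreover, that $\Theta$ is automatically preserved by embeddings; this lets the known quantifier elimination for $\Th(\cA\cN_K^\Theta)$ do all the work.

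First I would record the set-up. Since $\cA\cN_K$ is an elementary class (by \fref{fct:NakanoElemClass} together with the first-order expressibility of atomlessness recorded above), the models of $\Th(\cA\cN_K)$ are exactly the members of $\cA\cN_K$; by definition each such $N$ is the $\cL_{Bl}$-reduct of some $(N,\Theta) \in \cA\cN_K^\Theta$, which is in turn a model of $\Th(\cA\cN_K^\Theta)$. Now let $\theta \colon N \to N'$ be an $\cL_{Bl}$-embedding between two models of $\Th(\cA\cN_K)$, i.e.\ an isometric Banach lattice embedding of the corresponding unit balls. Because the spaces are atomless they are infinite-dimensional, so $\dim N \geq 2$ and \fref{cor:RigidModular} applies: $\theta$ respects the modular functional, $\Theta = \Theta' \circ \theta$. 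Hence $\theta$ is in fact an $\cL_{Bl}^\Theta$-embedding of $(N,\Theta)$ into $(N',\Theta')$.

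At this point I would invoke quantifier elimination for $\Th(\cA\cN_K^\Theta)$ (\cite[Theorem~3.9.4]{Poitevin:PhD}): a theory with quantifier elimination is model complete, so $\theta$ is $\cL_{Bl}^\Theta$-elementary. Since $\cL_{Bl}$ is a sublanguage of $\cL_{Bl}^\Theta$, preservation of all $\cL_{Bl}^\Theta$-formulae entails preservation of all $\cL_{Bl}$-formulae, and because passing to the $\cL_{Bl}$-reduct does not change the value of an $\cL_{Bl}$-formula, $\theta$ is $\cL_{Bl}$-elementary. As $\theta$ was arbitrary, $\Th(\cA\cN_K)$ is model complete.

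The substantive content is concentrated in the single step that upgrades an $\cL_{Bl}$-embedding to an $\cL_{Bl}^\Theta$-embedding: this is where the rigidity result \fref{cor:RigidModular} — and through it the functional-analytic analysis culminating in \fref{thm:EmbedFactor} — is doing the real work, and it is the only place where the hypothesis $\dim N \geq 2$ (here supplied by atomlessness) is genuinely needed. Everything else is formal language bookkeeping. The one case to keep an eye on is $K = \{p_0\}$ a single point, where one should simply note that atomlessness still forces infinite dimension, so $\dim N \geq 2$ holds and the argument goes through unchanged.
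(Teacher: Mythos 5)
Your proposal is correct and is exactly the argument the paper intends: its proof reads ``Follows from \fref{cor:RigidModular} and the quantifier elimination in $\cA\cN^\Theta_K$,'' and your write-up simply fills in the details of that same route (upgrading an $\cL_{Bl}$-embedding to an $\cL_{Bl}^\Theta$-embedding via \fref{cor:RigidModular}, then invoking quantifier elimination). No issues.
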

\begin{proof}
  Follows from \fref{cor:RigidModular} and the quantifier
  elimination in $\cA\cN^\Theta_K$.
\end{proof}

The next and last result of this section is quite quick and
straightforward to prove for a person who is quite familiar with the
notion of a \emph{measure algebra} and understands that
\fref{thm:EmbedFactor} is actually a result about measure algebras
rather than about measure spaces.
Having intentionally avoided all mention of measure algebras so
far, a longer approach is required, presenting this somewhat
different point of view.
We introduce measure algebras in a very sketchy fashion,
as an abstract version of a (strictly localisable) measure space.
For a comprehensive treatment we refer the reader to
\cite{Fremlin:MeasureTheoryVol3}.

Let $(X,\fB,\mu)$ be a measure space.
Let $\fB_f \subseteq \fB$ be the lattice of finite measure sets.
As an algebraic structure, $(\fB_f,\cup,\cap,\setminus)$ is a
relatively complemented distributive lattice (and if it contains a
maximal element then it is a Boolean algebra).
The measure $\mu\colon \fB_f \to \bR^+$ induces a pseudo-metric
$d(x,y) = \mu(x \triangle y)$ on $\fB_f$.
The kernel of this pseudo-metric, namely the equivalence relation
$d(x,y) = 0$, is compatible with the algebraic structure,
yielding a quotient relatively complemented distributive lattice
$(\overline \fB,\cap,\cup,\setminus)$.
The measure function $\mu$ induces an additive ``measure function''
$\bar \mu\colon \overline \fB \to \bR^+$,
and $d(x,y) = \bar \mu(x \triangle y)$ is a metric on
$\overline \fB$, with respect to which the operations
$\cap,\cup,\setminus$ are $1$-Lipschitz.
Moreover, it follows from $\sigma$-additivity of the original measure
that $\overline \fB$ is a complete metric space.
For the purpose of the discussion that follows,
we call $(\overline \fB,\cup,\cap,\setminus,\bar \mu)$ the
\emph{measure algebra} associated to $(X,\fB,\mu)$.

Conversely, let $(\fC,\cup,\cap,\setminus,\nu)$ be an
abstract measure algebra, namely a relatively
complemented distributive lattice where $\nu\colon \fC\to \bR^+$ is
additive, such that in addition
$d(x,y) = \nu(x \triangle y)$ is a complete metric on $\fC$.
Assume first that $\fC$ contains a maximal element $1$,
i.e., that $\fC$ is a Boolean algebra.
Let $\widetilde \fC$ be its Stone space.
For $x \in \fC$ let $\tilde x \subseteq \widetilde \fC$
be the corresponding clopen set,
and define $\tilde \nu_0(\tilde x) = \nu(x)$.
Then Carathéodory's Extension Theorem applies and we may extend
$\tilde \nu_0$ uniquely to a regular Borel measure $\tilde \nu$
on $\widetilde \fC$.
It is now easy to check that $(\fC,\cup,\cap,\setminus,\nu)$
is the measure algebra associated to the measure space
$(\widetilde\fC,\tilde \nu)$ (equipped with the Borel
$\sigma$-algebra).
In the general case let $\{a_i\}_{i\in I} \subseteq \fC$ be a maximal
disjoint family of non-zero members.
For each $i$ let $\fC_i = \{b\cap a_i\}_{b \in \fC}$ be the
restriction of $\fC$ to $a_i$.
Restriction the other operations we obtain a
measure algebra $(\fC_i,\cup,\cap,\setminus,\nu_i)$ with a maximal
element $a_i$, so the previous argument works.
The disjoint union
$\coprod_{i\in I} (\widetilde \fC_i,\tilde \nu_i)$ is a strictly
localisable measure space, and it is not difficult to check that its
measure algebra is (canonically identified with) $\fC$.

\begin{dfn}
  Let $\alpha > 0$ be an ordinal,
  $\{N_i\}_{i < \alpha}$ an increasing chain of members of
  $2\cN_{\subseteq [1,r]}$
  (as usual, all inclusions are assumed to be isometric).

  A \emph{compatible presentation} for this sequence is a
  sequence of presentations
  $N_i \cong L_{p_i(\cdot)}(X_i,\fB_i,\mu_i)$
  such that each inclusion $N_i \subseteq N_j$
  sends characteristic functions to characteristic functions.
\end{dfn}

\begin{lem}
  Let $\alpha > 0$ be a limit ordinal,
  $\{N_i\}_{i < \alpha}$ an increasing chain of members of
  $2\cN_{\subseteq [1,r]}$.
  Let $N_i \cong L_{p_i(\cdot)}(X_i,\fB_i,\mu_i)$, $i < \alpha$,
  be a compatible presentation for this sequence.
  Let $N_\alpha = \bigcup_{i<\alpha} N_i$ in the
  sense of continuous logic, namely the metric completion of the
  set-theoretic union.
  Then $N_\alpha \in 2\cN_{\subseteq [1,r]}$ as well.

  Moreover, there exists a presentation
  $N_\alpha \cong L_{p_\alpha(\cdot)}(X_i,\fB_i,\mu_i)$
  which extends the original compatible presentation to one for the
  sequence $\{N_i\}_{i \leq \alpha}$,
  and $\essrng p_\alpha = \overline{\bigcup_i \essrng p_i}$
\end{lem}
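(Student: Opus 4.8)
The plan is to pass from Nakano spaces to their measure algebras, using the correspondence between complete measure algebras and strictly localisable measure spaces recalled just above, and to perform the direct-limit construction there, where it is transparent. First I would note that since each $N_i$ has dimension at least two and the presentation is compatible, \fref{lem:NakanoEmbedSets} applies to every inclusion $N_i \subseteq N_j$: the induced map on characteristic functions is measure preserving ($\mu_j(\theta A) = \mu_i(A)$ for finite-measure $A$) and the exponents agree, $\hat\theta p_i = p_j \chi_{\hat\theta X_i}$. Hence each inclusion restricts to a measure-preserving embedding of the associated measure algebras $\fA_i = \overline{\fB_i} \hookrightarrow \fA_j = \overline{\fB_j}$, under which, for every Borel $S \subseteq [1,r]$, the level sets $[\{p_i \in S\}]$ correspond.

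Next I would form the directed union $\fA_{<\alpha} = \bigcup_{i<\alpha} \fA_i$, a relatively complemented distributive lattice carrying an additive measure, and let $\fA_\alpha$ be its metric completion for $d(x,y) = \bar\mu(x \triangle y)$. This is a complete measure algebra, so by the correspondence recalled above it is the measure algebra of a strictly localisable space $(X_\alpha,\fB_\alpha,\mu_\alpha)$ into which each $X_i$ embeds measure-preservingly. The compatible level sets glue to a consistent family of elements of the dense subalgebra $\fA_{<\alpha}$, which is precisely the datum of a measurable $p_\alpha \colon X_\alpha \to [1,r]$ whose restriction to each $X_i$ is $p_i$. Since $\alpha > 0$, the space $N_0$ of dimension at least two embeds in $N_\alpha$, so $\dim N_\alpha \geq 2$ is immediate. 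Because $p_\alpha\rest_{X_i} = p_i$, each inclusion $N_i \hookrightarrow L_{p_\alpha(\cdot)}(X_\alpha,\fB_\alpha,\mu_\alpha)$ is the extension-by-zeros identification and sends characteristic functions to characteristic functions; thus this presentation extends the given one to $\{N_i\}_{i \leq \alpha}$ by construction.

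The heart of the matter, and the step I expect to be the main obstacle, is to verify that $L_{p_\alpha(\cdot)}(X_\alpha,\fB_\alpha,\mu_\alpha)$ is exactly $N_\alpha$, i.e.\ that $\bigcup_{i<\alpha} N_i$ is dense in it; the subtlety is matching the metric completion taken on the Banach-lattice side against the one taken on the measure-algebra side. By \fref{cor:RigidModular} the extension-by-zeros maps respect the modular functional, so $\bigcup_i N_i$ sits isometrically inside $L_{p_\alpha(\cdot)}$ and $N_\alpha = \overline{\bigcup_i N_i}$ is a closed sublattice. Density then reduces to approximating simple functions: a norm-dense set of $L_{p_\alpha(\cdot)}$ consists of simple functions supported on finite-measure sets, and every finite-measure element of $\fA_\alpha$ is by construction a $d$-limit of elements of $\fA_{<\alpha} = \bigcup_i \fA_i$, that is, of finite-measure subsets living in some $X_i$. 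Since approximation in measure-algebra distance yields approximation in Nakano norm for bounded simple functions, each such simple function is a norm-limit of simple functions from the $N_i$, giving density and hence $N_\alpha \cong L_{p_\alpha(\cdot)}(X_\alpha,\fB_\alpha,\mu_\alpha) \in 2\cN_{\subseteq[1,r]}$.

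Finally I would compute the essential range, writing $K = \overline{\bigcup_i \essrng p_i}$. The inclusion $K \subseteq \essrng p_\alpha$ follows from \fref{cor:RigidEssRng} applied to each embedding $N_i \hookrightarrow N_\alpha$, together with the fact that the essential range is closed. For the reverse inclusion, suppose some finite-measure $e \leq [\{x : p_\alpha(x) \notin K\}]$ in $\fA_\alpha$ had $\bar\mu(e) > 0$. By density of $\fA_{<\alpha}$ there is $a \in \fA_i$, for some $i$, with $d(e,a) < \frac{1}{2}\bar\mu(e)$; but $a$ corresponds to a finite-measure subset of $X_i$, on which $p_\alpha = p_i \in \essrng p_i \subseteq K$ almost everywhere, forcing $e \cap a = 0$ and hence $\bar\mu(e \triangle a) \geq \bar\mu(e)$, a contradiction. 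Therefore $p_\alpha \in K$ almost everywhere, so $\essrng p_\alpha = K$, completing the proof.
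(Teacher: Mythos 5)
Your proposal is correct and follows essentially the same route as the paper: pass to the associated measure algebras, complete the directed union, realise the completion as the measure algebra of a strictly localisable space, glue the exponents using the rigidity of embeddings (\fref{lem:NakanoEmbedSets}/\fref{thm:EmbedFactor}), and conclude by approximating finite-measure sets, hence simple functions, from the dense union. Your only addition is spelling out the computation of $\essrng p_\alpha$, which the paper states without detail; that argument is sound.
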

\begin{proof}
  For $i < \alpha$ let
  $\overline \fB_i$ be the measure algebra associated to the
  measure space $(X_i,\fB_i,\mu_i)$.
  The compatibility assumption tells us precisely that
  for $i < j < \alpha$,
  the embedding $N_i \subseteq N_j$ induces an embedding
  $\overline \fB_i \subseteq \overline \fB_j$
  which respects the algebraic structure as
  well as the measure.
  We may therefore define
  $\fC = \widehat{\bigcup_{i<\alpha} \overline\fB_i}$
  (i.e., the completion of the union).
  Since the algebraic (lattice) operations as well as the measure
  function are uniformly continuous, they
  extend uniquely to $\fC$, rendering it an abstract measure
  algebra.
  By the discussion
  above we may identify it with the measure algebra
  $\overline \fB_\alpha$ of some measure space
  $(X_\alpha,\fB_\alpha,\mu_\alpha)$.

  For each $i <j  \leq \alpha$,
  the embedding $\overline \fB_i \subseteq \overline \fB_j$
  induces a partial mapping
  $L_0(X_i,\fB_i,\mu_i) \dashrightarrow L_0(X_j,\fB_j,\mu_j)$
  defined on the space of simple functions.
  By \fref{lem:SetMorphism} this extends uniquely to a total mapping
  $\hat \theta_{ij}\colon
  L_0(X_i,\fB_i,\mu_i) \to L_0(X_j,\fB_j,\mu_j)$.
  Moreover, for $i < j < k \leq \alpha$ we have
  $\hat \theta_{jk} \circ \hat \theta_{ij} = \hat \theta_{ik}$
  and
  $\hat \theta_{ij}\rest_{N_i}$
  coincides with the inclusion $N_i \subseteq N_j$.

  By \fref{thm:EmbedFactor} we have
  $\hat \theta_{ij} p_i = \chi_{\hat \theta_{ij}X_i} p_j$,
  whence
  $\hat \theta_{i\alpha} p_i \rest_{\hat \theta_{i\alpha}X_i}
  = \hat \theta_{j\alpha} p_j \rest_{\hat \theta_{i\alpha}X_i}$
  for $i < j < \alpha$.
  It is also not difficult to check
  that
  $X_\alpha = \bigcup_{i<\alpha} \hat \theta_{i\alpha}X_i$
  up to null measure
  (more precisely, that every finite measure
  $A \in \fB_\alpha$ is contained, up to arbitrarily small measure,
  by some $\hat \theta_{i\alpha}X_i$),
  so there exists a unique measurable
  $p_\alpha \colon X_\alpha \to [1,r]$ such that
  $\hat \theta_{i\alpha} p_i
  = \chi_{\hat \theta_{i\alpha}X_i} p_\alpha$ for all $i< \alpha$,
  and its essential range is as stated.
  Let
  $N_\alpha' = L_{p_\alpha(\cdot)}(X_\alpha,\fB_\alpha,\mu_\alpha)$.
  We obtain embeddings
  $\hat\theta_{i,\alpha}\rest_{N_i} \colon N_i \to N_\alpha'$.
  Moreover, every characteristic function of a finite measure set in
  $N_\alpha'$ is arbitrarily well approximated by members of the set
  union $\bigcup_i \hat\theta_{i,\alpha}(N_i)$
  (by construction of $\fC$).
  It follows that the image of the set union is dense, whence
  we get an isomorphism
  $N_\alpha
  \cong N_\alpha'
  = L_{p_\alpha(\cdot)}(X_\alpha,\fB_\alpha,\mu_\alpha)$
  which respects characteristic functions, as desired.
\end{proof}

\begin{thm}
  The (theories of the) classes
  $\cN_K$, $\cN_{\subseteq K}$, $\cN_K^\Theta$,
  $\cN_{\subseteq K}^\Theta$, and similarly with prefixes
  $2$ and $\cA$, are all inductive.
\end{thm}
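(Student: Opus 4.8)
The plan is to invoke the standard characterization of inductive theories in continuous logic: a theory is inductive precisely when its class of models is closed under unions of increasing chains (see \fref{apx:ContModTh}). Thus for each class $\cC$ in the list it suffices to take an increasing chain $\{N_i\}_{i<\alpha}$ of members and show that the completed union $N_\alpha = \overline{\bigcup_{i<\alpha} N_i}$ again lies in $\cC$. The whole point of the preceding Lemma is to do exactly this for $2\cN_{\subseteq[1,r]}$ — but only \emph{after} a compatible presentation of the chain has been fixed. Producing such a presentation for an \emph{arbitrary} chain is the real work, and I expect it to be the main obstacle.

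First I would treat the dimension-$\geq 2$ classes. Given a chain in $2\cN_{\subseteq[1,r]}$ I would build a compatible presentation by transfinite recursion on $i$. At a successor step $i \to i+1$ the inclusion $N_i \hookrightarrow N_{i+1}$ is an isometric Banach-lattice embedding with $\dim N_i \geq 2$, so \fref{thm:EmbedFactor} lets me perform a density change on $N_{i+1}$ after which this inclusion sends characteristic functions to characteristic functions; since a density change on the \emph{target} leaves the already-fixed presentations of all $N_j$, $j\leq i$, untouched and embeddings compose, the presentation stays compatible. At a limit stage $\lambda$ I would first apply the preceding Lemma to $\{N_j\}_{j<\lambda}$ to present $M_\lambda := \overline{\bigcup_{j<\lambda} N_j}\in 2\cN_{\subseteq[1,r]}$ compatibly; as $M_\lambda \subseteq N_\lambda$ is again an embedding of spaces of dimension $\geq 2$, a further application of \fref{thm:EmbedFactor} (a density change on $N_\lambda$) makes it respect characteristic functions, and compatibility with each $N_j \subseteq N_\lambda$ follows by composition. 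Having presented the whole chain compatibly, the preceding Lemma applied with $\alpha$ as the limit yields $N_\alpha \in 2\cN_{\subseteq[1,r]}$ together with $\essrng p_\alpha = \overline{\bigcup_i \essrng p_i}$.

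The remaining bookkeeping is then routine. Because $K$ is compact, hence closed, $\essrng p_i \subseteq K$ for all $i$ forces $\essrng p_\alpha \subseteq K$, and $\essrng p_i = K$ for all $i$ forces $\essrng p_\alpha = K$; this settles $2\cN_{\subseteq K}$ and $2\cN_K$. Atomlessness and dimension $\geq 2$ are expressed by the $\sup\inf$ and $\inf$ conditions recorded earlier, which are of inductive ($\forall\exists$) and existential form respectively, and so pass to $N_\alpha$; this gives the $\cA$- and (again) the $2$-prefixed variants. For the $\Theta$-expanded classes I would use that, by \fref{cor:RigidModular}, every Banach-lattice embedding already respects $\Theta$, so a chain in $\cN^\Theta_K$ is nothing but a chain in $\cN_K$ carrying its unique $\Theta$-expansion; by \fref{thm:DefinableModular} and \fref{cor:DefModK} the modular functional on $N_\alpha$ is the value of the definable predicate $\varphi_\Theta$, placing $(N_\alpha,\Theta)$ in the corresponding $\Theta$-class.

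Finally, for the classes without the dimension restriction I would reduce to the above. If some member of the chain has dimension $\geq 2$, then the corresponding tail is a chain in the dimension-$\geq 2$ class with the same union $N_\alpha$, and the previous analysis applies; if every member has dimension $\leq 1$, the union is a space of dimension $\leq 1$ and membership is immediate. The crux, as flagged, is the recursive construction of the compatible presentation: the care needed is to verify that the density changes furnished by \fref{thm:EmbedFactor} at successor and limit stages never disturb the presentations already committed to at earlier stages — which holds precisely because each such change is carried out on the target of the relevant embedding — so that the hypotheses of the preceding Lemma are genuinely satisfied at stage $\alpha$.
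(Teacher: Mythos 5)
Your proof is correct and follows essentially the paper's route: reduce everything to the preceding Lemma via the dimension-$\geq 2$ classes, then dispose of the remaining classes by passing to tails and by noting that the atomlessness and dimension axioms are of inductive form. The paper's own proof simply declares the $2\cN_{\subseteq K}$ case ``immediate from the previous Lemma''; your transfinite construction of a compatible presentation (density changes furnished by \fref{thm:EmbedFactor} at successor and limit stages, never disturbing earlier presentations) supplies exactly the detail needed to make that application of the Lemma legitimate for an arbitrary chain.
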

\begin{proof}
  It is immediate from the previous Lemma that
  $2\cN_{\subseteq K}$ and $2\cN_{\subseteq K}^\Theta$ are inductive.
  It follows that $\cN_{\subseteq K}$
  and $\cN_{\subseteq K}^\Theta$ are inductive, since every infinite
  increasing chain in this classes has a tail in
  $2\cN_{\subseteq K}$ or in $2\cN_{\subseteq K}^\Theta$.
  Since the atomlessness axiom is inductive,
  the classes 
  $\cA\cN_{\subseteq K}$ and $\cA\cN_{\subseteq K}^\Theta$ are
  inductive.
  The same reasoning works for $K$ instead of $\subseteq K$.
  (Of course, for $\cA\cN_K$ and $\cA\cN_K^\Theta$,
  inductiveness follows directly from model completeness).
\end{proof}

\section{Perturbations of the exponent}
\label{sec:PertExp}

Intuitively, a small change to the exponent function $p$ should not
change the structure of a Nakano space by too much.
We formalise this intuitive idea, showing that small perturbations of
the exponent form indeed a perturbation system in the sense of
\cite{BenYaacov:Perturbations}.
We show that up to such perturbations, every complete theory of Nakano
spaces is $\aleph_0$-categorical and $\aleph_0$-stable.
In case $p$ is constant (i.e., $K$ is a singleton), we already know
(see, e.g., \cite{BenYaacov-Berenstein-Henson:LpBanachLattices})
that the theory is $\aleph_0$-stable and
$\aleph_0$-categorical without perturbation.
Indeed, no perturbation of $p$ is possible in this case, so it is
a special case of what we prove below.

\subsection{Preliminary computations}

We seek bounds for $1+\gamma^s$ in terms of $(1+\gamma)^s$, and for
$1-\gamma^s$ in terms of $(1-\gamma)^s$,
where $\gamma \in [0,1]$ and $s \in [1/r,r]$.
The function $\frac{1+\gamma^s}{(1+\gamma)^s}$ is well behaved, i.e., continuous
as a function of two variables, and will not cause trouble.
The function $\frac{1-\gamma^s}{(1-\gamma)^s}$
is badly behaved near $\gamma = 1$,
so we shall only use it for $\gamma \in [0,\half]$.
For $\gamma \in [\half,1]$ we shall have to consider another function, namely
$\varphi(\gamma,s) = \frac{\ln(1-\gamma^s)}{\ln(1-\gamma)}$,
which is, for the time being,
only defined for $\gamma \in (0,1)$ and $s > 0$.
We calculate its limit as $\gamma \to 1$ for a fixed $s > 0$ making
several uses of l'H\^opital's rule (marked with
$*$):
\begin{align*}
  \lim_{\gamma \to 1} \frac{\ln(1-\gamma^s)}{\ln(1-\gamma)}
  & =^* \lim_{\gamma\to1} \frac{-s\gamma^{s-1}(1-\gamma^s)^{-1}}{-(1-\gamma)^{-1}}
  = \lim_{\gamma\to1} \frac{s\gamma^{s-1}(1-\gamma)}{1-\gamma^s} \\
  & =^* \lim_{\gamma\to1} \frac{s(s-1)\gamma^{s-2} - s^2\gamma^{s-1}}{-s\gamma^{s-1}}
  = \frac{-s}{-s} = 1.
\end{align*}
It is therefore natural to extend $\varphi$ by $\varphi(1,s) = 1$.
This function is continuous in each variable for $s > 0$ and
$\gamma \in (0,1]$, and we wish to show that it is continuous as a
function of two variables.
In fact, all we need is to show it is continuous on
$[\half,1] \times [1,r]$.

Assume $\gamma \in (0,1)$, $s \in [1,r]$.
A straightforward verification leads to:
\begin{gather*}
  \frac{\ln(1-\gamma^r)}{\ln(1-\gamma)}
  \leq \frac{\ln(1-\gamma^s)}{\ln(1-\gamma)} \leq 1,
  \intertext{whereby:}
  \left| 1-\frac{\ln(1-\gamma^s)}{\ln(1-\gamma)} \right|
  \leq \left| 1-\frac{\ln(1-\gamma^r)}{\ln(1-\gamma)} \right|.
\end{gather*}
Thus $\lim_{\gamma \to 1} \frac{\ln(1-\gamma^s)}{\ln(1-\gamma)} = 1$ \emph{uniformly}
for $s \in [1,r]$, and $\varphi(\gamma,s)$ is indeed continuous
on $[\half,1] \times [1,r]$.

We now define for $1 \leq s \leq r$:
\begin{align*}
  A_s & = \inf \left\{
    \frac{\ln(1-\gamma^s)}{s\ln(1-\gamma)}\colon \gamma \in [\half,1)
  \right\} \leq \frac{1}{s}, \\
  B_s^- & = \sup \left\{
    \frac{1-\gamma^t}{(1-\gamma)^t}\colon \gamma \in [0,\half], t\in[1/s,s]
  \right\}, \\
  B_s^+ & = \sup \left\{
    \frac{1+\gamma^t}{(1+\gamma)^t}\colon \gamma \in [0,1], t\in[1/s,s]
  \right\}, \\
  B_s & = \max\{ B_s^-, B_s^+ \}.
\end{align*}
By continuity of $\varphi(\gamma,s)$, and since $\varphi(\gamma,1) = 1$ for all $\gamma$:
$\lim_{s\to1} A_s = \lim_{s\to1} \frac{1}{s} = 1$.
Similarly $\lim_{s\to1} B_s = 1$.

In particular we have for $s \in [1,r]$ and $\gamma \in [\half,1)$:
$A_s \leq \frac{\ln(1-\gamma^s)}{s\ln(1-\gamma)}$ whereby
$sA_s\ln(1-\gamma) \geq \ln(1-\gamma^s)$ and thus
$(1-\gamma)^{sA_s} \geq 1-\gamma^s$.

\begin{lem}
  Let $\alpha,\beta \in [-1,1]$ and $1/s \leq t \leq s$.
  Then
  \begin{gather*}
    \big| \sgn(\alpha)|\alpha|^t - \sgn(\beta)|\beta|^t \big| \leq
    \max \Big\{ |\alpha-\beta|^{A_st}, B_s|\alpha-\beta|^t \Big\}.
  \end{gather*}
\end{lem}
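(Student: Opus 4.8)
The plan is to reduce the estimate to the behaviour of the single odd, increasing function $g(x) = \sgn(x)|x|^t$ on $[-1,1]$ and to split into cases according to the signs of $\alpha,\beta$ and the ratio of their absolute values. Since $g$ is odd, both $|g(\alpha)-g(\beta)|$ and $|\alpha-\beta|$ are unchanged under $(\alpha,\beta)\mapsto(-\alpha,-\beta)$, so I may assume that $\alpha$ and $\beta$ are not both negative. After this normalisation there remain two genuinely distinct situations: $\alpha$ and $\beta$ of opposite sign, and $\alpha,\beta\ge0$; in the latter I further normalise to $\alpha\ge\beta\ge0$ and set $\gamma=\beta/\alpha\in[0,1]$ (the case $\alpha=0$ being trivial).

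The two ``easy'' branches both land on the bound $B_s|\alpha-\beta|^t$ and use only the defining suprema $B_s^\pm$. In the opposite-sign case, say $\alpha\ge0\ge\beta$, writing $a=\alpha$ and $b=|\beta|$ gives $|g(\alpha)-g(\beta)|=a^t+b^t$ and $|\alpha-\beta|=a+b$; normalising by $\max\{a,b\}$ turns the quotient $(a^t+b^t)/(a+b)^t$ into $\frac{1+\gamma^t}{(1+\gamma)^t}\le B_s^+\le B_s$. In the same-sign case with small ratio $\gamma\in[0,\half]$, the quotient $(\alpha^t-\beta^t)/(\alpha-\beta)^t$ equals $\frac{1-\gamma^t}{(1-\gamma)^t}\le B_s^-\le B_s$. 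Each directly yields $|g(\alpha)-g(\beta)|\le B_s|\alpha-\beta|^t$, which is dominated by the right-hand side of the lemma.

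The crux is the remaining case $\alpha\ge\beta\ge0$ with $\gamma\in[\half,1)$, where I instead target the other branch $|\alpha-\beta|^{A_st}$ (this is forced: for $t>1$ the quotient $\frac{1-\gamma^t}{(1-\gamma)^t}$ blows up as $\gamma\to1$, so no $B_s$-type bound can survive). Here I plan to prove the two inequalities $\alpha^t\le\alpha^{A_st}$ and $1-\gamma^t\le(1-\gamma)^{A_st}$ and multiply them, since $\alpha^t(1-\gamma^t)=\alpha^t-\beta^t$ and $(\alpha(1-\gamma))^{A_st}=|\alpha-\beta|^{A_st}$. The first is immediate because $\alpha\in[0,1]$ and $A_s\le\frac1s\le1$ force $A_st\le t$. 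The second, after taking logarithms and reversing the inequality (as $\ln(1-\gamma)<0$), is precisely $\frac{\ln(1-\gamma^t)}{t\ln(1-\gamma)}\ge A_s$.

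The main obstacle is exactly this last inequality: $A_s$ is defined with the exponent $s$, whereas I need the statement for every $t\in[1/s,s]$, including $t<s$. I plan to overcome it by a monotonicity argument in $t$. As in the preliminary computation, $\frac{\ln(1-\gamma^t)}{\ln(1-\gamma)}$ is positive and decreasing in $t$ (its numerator increases toward $0$ as $t$ grows while the denominator is a fixed negative constant), so dividing by the increasing factor $t$ keeps the expression decreasing. Hence for $t\le s$ the quantity $\frac{\ln(1-\gamma^t)}{t\ln(1-\gamma)}$ is at least its value at $t=s$, which for $\gamma\in[\half,1)$ is at least $A_s$ by the definition of the infimum. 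This delivers $1-\gamma^t\le(1-\gamma)^{A_st}$ and closes the hard case; assembling the three cases then proves the lemma.
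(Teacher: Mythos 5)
Your proof is correct and follows essentially the same route as the paper's: the identical three-way case split (opposite signs; same sign with ratio in $[0,\half]$; same sign with ratio in $[\half,1)$), the same use of $B_s^{\pm}$ in the first two cases, and the same reduction of the hard case to $|\alpha|^t\le|\alpha|^{A_st}$ together with $1-\gamma^t\le(1-\gamma)^{A_st}$. The only (immaterial) difference is how that last inequality is obtained: the paper chains $1-\gamma^t\le 1-\gamma^s\le(1-\gamma)^{A_ss}\le(1-\gamma)^{A_st}$, whereas you prove monotonicity of $\frac{\ln(1-\gamma^t)}{t\ln(1-\gamma)}$ in $t$ — both are one-line monotonicity observations resting on the definition of $A_s$ at the endpoint $t=s$.
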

\begin{proof}
  We may assume that $|\alpha| \geq |\beta|$ by symmetry.
  We may further assume that $\alpha,\beta \neq 0$.
  Assume first that $\sgn(\alpha\beta) = -1$.
  Then:
  \begin{align*}
    \big| \sgn(\alpha)|\alpha|^t - \sgn(\beta)|\beta|^t \big|
    & = |\alpha|^t (1 + |\beta/\alpha|^t) \\
    & \leq |\alpha|^t B_s(1 + |\beta/\alpha|)^t \\
    & = B_s |\alpha-\beta|^t.
  \end{align*}
  A similar argument shows that when $\sgn(\alpha\beta) = 1$ and
  $|\beta/\alpha| \leq 1/2$:
  \begin{align*}
    \big| \sgn(\alpha)|\alpha|^t - \sgn(\beta)|\beta|^t \big|
    & \leq B_s |\alpha-\beta|^t.
  \end{align*}
  Finally, assume $\sgn(\alpha\beta) = 1$ and $|\beta/\alpha| \geq 1/2$.
  We use the fact that $|\alpha| \leq 1$ and $A_s \leq 1/s < 1$ imply that
  $|\alpha| \leq |\alpha|^{A_s}$:
  \begin{align*}
    \big| \sgn(\alpha)|\alpha|^t - \sgn(\beta)|\beta|^t \big|
    & = |\alpha|^t (1 - |\beta/\alpha|^t)
    \leq |\alpha|^t (1-|\beta/\alpha|^s) \\
    & \leq |\alpha|^t (1 - |\beta/\alpha|)^{A_ss} \\
    & \leq |\alpha|^{A_st} (1 - |\beta/\alpha|)^{A_st} \\
    & = |\alpha-\beta|^{A_st}.
  \end{align*}
  This completes the proof.
\end{proof}

\begin{lem}
  For all $\gamma,t \in [0,1]$ : $t(1-\gamma) + \gamma^t \leq 1$
  (where $0^0 = 1$).
\end{lem}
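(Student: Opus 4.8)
The plan is to rewrite the inequality in the equivalent form $\gamma^t \le 1 - t + t\gamma$ and recognise it as the tangent-line estimate for the concave function $x \mapsto x^t$. First I would fix $t \in [0,1]$ and consider $h(x) = x^t$ on $[0,\infty)$. For such $t$ this function is concave, hence lies below each of its tangent lines. The tangent at $x = 1$ passes through $h(1) = 1$ with slope $h'(1) = t$, so it is the affine map $x \mapsto 1 - t + tx$. Concavity then gives $x^t \le 1 - t + tx$ for every $x \ge 0$, and substituting $x = \gamma$ yields $\gamma^t \le 1 - t + t\gamma$, which is exactly $t(1-\gamma) + \gamma^t \le 1$.

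Alternatively, and perhaps more self-contained, I would fix $\gamma \in (0,1]$ and study $f(t) = t(1-\gamma) + \gamma^t$ as a function of $t$. Since $t(1-\gamma)$ is affine and $\gamma^t = e^{t\ln\gamma}$ is convex (being the exponential of a linear function of $t$), $f$ is convex on $[0,1]$. A one-line computation gives $f(0) = \gamma^0 = 1$ and $f(1) = (1-\gamma) + \gamma = 1$, so the chord joining the two endpoints is the constant function $1$; convexity, i.e.\ the graph lying below its chords, then forces $f(t) \le 1$ throughout $[0,1]$.

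Either way, the only points needing separate attention are the degenerate conventions, and this is where whatever small ``obstacle'' there is resides. For $\gamma = 0$ the convexity-in-$t$ argument does not apply verbatim, but the inequality is checked by hand: if $t > 0$ then $0^t = 0$ and the left-hand side equals $t \le 1$, while if $t = 0$ the convention $0^0 = 1$ makes the left-hand side $0 + 1 = 1$. I do not anticipate any genuine difficulty — the whole content is the concavity of $x \mapsto x^t$ for $t \in [0,1]$, equivalently a weighted AM--GM estimate — so the only care required is bookkeeping at the boundary values $t \in \{0,1\}$ and $\gamma \in \{0,1\}$, precisely where equality is attained.
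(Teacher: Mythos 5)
Your proof is correct. The paper's own argument fixes $t\in(0,1)$ and differentiates $f_t(\gamma)=t(1-\gamma)+\gamma^t$ with respect to $\gamma$, observing that $f_t'(\gamma)=-t+t\gamma^{t-1}>0$ on $(0,1)$, so $f_t$ increases up to $f_t(1)=1$. Your first variant (concavity of $x\mapsto x^t$ and the tangent line at $x=1$) is essentially the same fact in different clothing: the positivity of $f_t'$ on $(0,1)$ is exactly the statement that the derivative $t\gamma^{t-1}$ of the concave function exceeds the slope $t$ of its tangent at $1$, so you gain little beyond a cleaner conceptual label. Your second variant is genuinely different and, to my eye, slicker: it swaps the roles of the variables, fixing $\gamma$ and exploiting convexity of $t\mapsto e^{t\ln\gamma}$ together with the endpoint equalities $f(0)=f(1)=1$, so that the chord is the constant $1$ and no derivative computation is needed at all. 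Both routes require the same boundary bookkeeping at $\gamma=0$ (and the convention $0^0=1$), which you handle correctly; the paper sidesteps this by treating $t\in\{0,1\}$ separately and noting that for $t\in(0,1)$ the monotonicity argument gives the bound on all of $[0,1]$ by continuity.
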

\begin{proof}
  This is clear for $t \in \{0,1\}$.
  So let $t \in (0,1)$, and let
  $f_t(\gamma) = t(1-\gamma) + \gamma^t$.
  Then $f_t(1) = 1$, and for $0 < \gamma < 1$ and $t-1 < 0$ we have
  $\gamma^{t-1} > 1$ whereby:
  \begin{gather*}
    \frac{d}{d\gamma}f_t = -t+t\gamma^{t-1} >-t+t = 0.
  \end{gather*}
  Thus $f_t(\gamma) \leq 1$ for all $\gamma \in [0,1]$.
\end{proof}

For $1 \leq s \leq r$ and $0 \leq x \leq 2$, define:
\begin{gather*}
  \eta_s(x) =
  \begin{cases}
    x^{A_s/s} & x \leq 1 \\
    x^s & 1 < x \leq 2
  \end{cases} \\
  \hat \eta_s(x) = 2^{1-A_s}B_s\eta_s(x)/A_s.
\end{gather*}

\begin{lem}
  \label{lem:EtaConvId}
  As $s \to 1$, the functions $\eta_s$ converge uniformly to the identity.
  As a consequence, $\hat \eta_s \to \id$ uniformly as $s \to 1$.
\end{lem}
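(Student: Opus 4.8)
The plan is to first establish that $\eta_s \to \id$ uniformly on $[0,2]$ and then read off the statement for $\hat\eta_s$ from it. Since $\eta_s$ is defined piecewise, I would treat the two pieces separately, the essential observation being that on $[0,1]$ the relevant exponent is $A_s/s$ rather than $s$, and that this exponent is at most $1$.

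On $[0,1]$ we have $\eta_s(x) = x^{A_s/s}$, and since $A_s \le 1/s$ the exponent $a := A_s/s$ satisfies $a \le 1/s^2 \le 1$; in particular $x^a \ge x$ there. The clean uniform bound comes from the preliminary inequality $t(1-\gamma)+\gamma^t \le 1$ proved just above: taking $t = a$ and $\gamma = x$ gives $x^a \le 1 - a(1-x)$, whence $0 \le x^a - x \le (1-a)(1-x) \le 1 - A_s/s$. As $s \to 1$ we have $A_s \to 1$ (established above), so $A_s/s \to 1$ and this bound tends to $0$ uniformly in $x$.

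On $[1,2]$ we have $\eta_s(x) = x^s$ with $s \ge 1$, so $x^s \ge x$ and $\frac{d}{dx}(x^s - x) = s x^{s-1} - 1 \ge 0$ there; thus $x^s - x$ is maximised at the right endpoint, giving $0 \le x^s - x \le 2^s - 2 \to 0$ as $s \to 1$. Combining the two intervals (they agree at $x=1$, where $\eta_s(1)=1$) yields $\lVert \eta_s - \id \rVert_\infty \to 0$ on $[0,2]$. For the consequence, I would write $\hat\eta_s = c_s\,\eta_s$ with $c_s = 2^{1-A_s}B_s/A_s$. Since $A_s \to 1$ and $B_s \to 1$ (both established above), $c_s \to 1$, so $c_s$ stays bounded for $s$ near $1$. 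On $[0,2]$ the triangle inequality gives $|\hat\eta_s(x) - x| \le c_s|\eta_s(x) - x| + |c_s - 1|\,|x| \le c_s\lVert \eta_s - \id \rVert_\infty + 2|c_s - 1|$, and both summands tend to $0$; hence $\hat\eta_s \to \id$ uniformly.

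The one point requiring genuine care — and the main potential obstacle — is the behaviour of $x^{A_s/s}$ as $x \to 0$ on $[0,1]$, where crude pointwise estimates degrade and uniformity is not obvious. The preliminary inequality $t(1-\gamma)+\gamma^t\le 1$ is precisely what controls this: recognising that $a \le 1$ (so that the inequality is applicable) and that it produces the $x$-independent bound $1 - A_s/s$ is the crux of the argument; everything else is routine.
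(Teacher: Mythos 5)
Your proof is correct and follows the same route as the paper, which simply asserts that one verifies uniform convergence separately for $x \mapsto x^{A_s/s}$ on $[0,1]$ and $x \mapsto x^s$ on $[1,2]$ and then deduces the statement for $\hat\eta_s$; you carry out exactly that decomposition, supplying the details the paper omits. Your use of the preliminary inequality $t(1-\gamma)+\gamma^t \le 1$ to get the $x$-independent bound $0 \le x^{A_s/s}-x \le 1-A_s/s$ on $[0,1]$ is a clean way to handle the only delicate piece.
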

\begin{proof}
  For $\eta_s$, one verifies uniform convergence separately for
  $x \mapsto x^{A_s/s}$ on $[0,1]$ and for $x \mapsto x^s$ on $[1,2]$.
  Uniform convergence of $\hat \eta_s$ follows.
\end{proof}

For $1 \leq s \leq r$, define:
\begin{align*}
  C_s^1 & = \sup \left\{
    \left|
      \sgn\left( \half[\alpha+\beta] \right)
      \left| \half[\alpha+\beta] \right|^t
      -
      \half[\sgn(\alpha)|\alpha|^t + \sgn(\beta)|\beta|^t]
    \right|
    \colon
    \alpha,\beta \in [-1,1], t \in [1/s,s]
  \right\}, \\
  C_s^2 & = \sup \bigl\{ |x - \hat \eta_s(x)|\colon x \in [0,2] \bigr\}, \\
  C_s & = \max \{C_s^1, C_s^2\}.
\end{align*}
Then $\lim_{s\to1} C_s = 0$.

\subsection{Perturbing the exponent}

\begin{dfn}
  \label{dfn:RhoFunc}
  Let $(X,\fB,\mu)$ be a measure space and
  $p,q\colon X \to [1,r]$ measurable.
  We define $\sE_{p,q}\colon L_0(X,\fB,\mu) \to L_0(X,\fB,\mu)$ by:
  \begin{gather*}
    (\sE_{p,q} f)(x) = \sgn(f(x)) |f(x)|^{p(x)/q(x)}.
  \end{gather*}
\end{dfn}

\begin{lem}
  \label{lem:ExpPertProps}
  We continue with the assumptions of \fref{dfn:RhoFunc}.
  Let $(N,\Theta) = (L_{p(\cdot)}(X,\fB,\mu),\Theta_{p(\cdot)})$ and
  $(N',\Theta') = (L_{q(\cdot)}(X,\fB,\mu),\Theta_{q(\cdot)})$.
  \begin{enumerate}
  \item For each $f \in L_0(X,\fB,\mu)$ we have
    $\Theta(f) = \Theta'(\sE_{p,q} f)$.
    Thus in particular $\sE_{p,q}$ sends $N$ into $N'$ and the unit
    ball of $N$ into the unit ball of $N'$.
  \item The mapping $\sE_{p,q}$ is bijective, its inverse being
    $\sE_{q,p}$.
    It restricts to a bijection between $N$ and $N'$, as well as to a
    bijection between their respective unit balls.
  \item
    \label{item:ExpPertDenseChangeCommute}
    The mapping $\sE_{p,q}$ commutes with measure density change.
    More precisely, assume $\nu$ is another measure on $(X,\fB)$,
    equivalent to $\mu$, say $d\nu(x) = \zeta(x)d\mu(x)$.
    Let $M = L_{p(\cdot)}(X,\fB,\nu)$, $M' = L_{q(\cdot)}(X,\fB,\nu)$.
    Let $D^p_{\mu,\nu}\colon N\to M$ and $D^{q}_{\mu,\nu}\colon N' \to M'$ be the
    respective density change mappings.
    Then $D^{q}_{\mu,\nu} \circ \sE_{p,q}
    = \sE_{p,q} \circ D^p_{\mu,\nu}\colon N \to M'$.
  \end{enumerate}
\end{lem}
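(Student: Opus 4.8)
The plan is to prove all three items by direct pointwise computation, since $\sE_{p,q}$ is defined pointwise and the modular functionals are integrals of pointwise expressions. For item (1) the key pointwise identity is $|(\sE_{p,q}f)(x)|^{q(x)} = |f(x)|^{p(x)}$: indeed $|(\sE_{p,q}f)(x)| = |f(x)|^{p(x)/q(x)}$ because the $\sgn$ factor contributes only a sign, and raising to the power $q(x)$ cancels the $q(x)$ in the exponent's denominator. Integrating against $d\mu$ gives $\Theta'(\sE_{p,q}f) = \Theta(f)$ for every $f \in L_0(X,\fB,\mu)$. To deduce that $\sE_{p,q}$ maps $N$ into $N'$ and the unit ball of $N$ into that of $N'$, I would record the Luxemburg-norm characterization $\|f\|_{p(\cdot)} \le 1 \iff \Theta(f) \le 1$, valid because $c \mapsto \Theta(f/c)$ is non-increasing (as $p \ge 1 > 0$, so $c^{-p(x)}$ decreases in $c$). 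Then the unit ball of $N$ is exactly $\{\Theta \le 1\}$ and likewise for $N'$, so the inclusion on unit balls is immediate from $\Theta = \Theta' \circ \sE_{p,q}$, and the inclusion $N \to N'$ follows from the same identity with ``$<\infty$'' in place of ``$\le 1$''.

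For item (2) I would compute the composite $\sE_{q,p}\circ\sE_{p,q}$ pointwise. Setting $g = \sE_{p,q}f$, one has $\sgn(g(x)) = \sgn(f(x))$ and $|g(x)| = |f(x)|^{p(x)/q(x)}$, hence $\sE_{q,p}(g)(x) = \sgn(f(x))\bigl(|f(x)|^{p(x)/q(x)}\bigr)^{q(x)/p(x)} = \sgn(f(x))|f(x)| = f(x)$, the exponents cancelling and the sign being restored. The reverse composite $\sE_{p,q}\circ\sE_{q,p} = \id$ is symmetric. Thus $\sE_{q,p} = \sE_{p,q}^{-1}$, and the claimed restrictions to bijections $N \leftrightarrow N'$ and between their unit balls follow from item (1) applied to both $\sE_{p,q}$ and $\sE_{q,p}$.

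For item (3) I would again expand both sides pointwise. Writing $w = \frac{d\mu}{d\nu}$, a strictly positive measurable function, the left-hand side $D^{q}_{\mu,\nu}(\sE_{p,q}f)$ equals $w^{1/q}\,\sgn(f)\,|f|^{p/q}$, while the right-hand side is $\sE_{p,q}(w^{1/p}f) = \sgn\!\left(w^{1/p}f\right)\bigl|w^{1/p}f\bigr|^{p/q}$. Since $w > 0$, the sign factor equals $\sgn(f)$, and $\bigl|w^{1/p}f\bigr|^{p/q} = \bigl(w^{1/p}\bigr)^{p/q}|f|^{p/q} = w^{1/q}|f|^{p/q}$, so the two sides coincide. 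The only arithmetic point is the identity $\bigl(w(x)^{1/p(x)}\bigr)^{p(x)/q(x)} = w(x)^{1/q(x)}$, which holds pointwise because $w(x) > 0$.

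All of the computations are elementary, so I do not anticipate a substantial obstacle; the points meriting a word of care are the behaviour at zeros of $f$ (where $\sE_{p,q}f$ and the sign both vanish, so every identity holds trivially), the routine ``equality a.e.'' bookkeeping inherent to working in $L_0$, and, for item (1), explicitly flagging the equivalence between membership in the unit ball and $\Theta \le 1$ so that the ``sends the unit ball into the unit ball'' clause genuinely follows from the modular identity rather than requiring a separate norm estimate.
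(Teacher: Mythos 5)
Your proposal is correct and follows essentially the same route as the paper: the pointwise identity $|\sE_{p,q}f|^{q} = |f|^{p}$ gives item (1) by integration, item (2) follows by composing exponents, and item (3) is the same density-change computation (your $w = \frac{d\mu}{d\nu}$ is the paper's $\zeta^{-1}$). The extra care you take with the Luxemburg-norm equivalence $\|f\|\le 1 \iff \Theta(f)\le 1$ is a detail the paper leaves implicit, and it is handled correctly.
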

\begin{proof}
  For the first item we calculate that:
  \begin{gather*}
    \Theta'(\sE_{p,q} f) = \int |f(x)|^{p(x)} d\mu = \Theta(f).
  \end{gather*}
  The second item follows.
  Finally, we calculate:
  \begin{align*}
    (D^{q}_{\mu,\nu} \sE_{p,q} f)(x)
    & = \zeta(x)^{-1/q(x)}(\sE_{p,q}f)(x) \\
    & = \zeta(x)^{-1/q(x)}\sgn(f(x))|f(x)|^{p(x)/q(x)} \\
    & = \sgn(\zeta(x)^{-1/p(x)}f(x))|\zeta(x)^{-1/p(x)}f(x)|^{p(x)/q(x)} \\
    & = \sgn((D^p_{\mu,\nu} f)(x))|(D^p_{\mu,\nu} f)(x)|^{p(x)/q(x)} \\
    & = (\sE_{p,q} D^p_{\mu,\nu} f)(x),
  \end{align*}
  proving the third item.
\end{proof}

\begin{prp}
  \label{prp:RhoDistChange}
  We continue with the notation and assumptions of
  \fref{lem:ExpPertProps}.
  Assume that $s$ is such that $1/s \leq q(x)/p(x)\leq s$
  (for example, we can take $s = r$).
  Then for every $f,g \in N_1$ (the unit ball of $N$):
  $\|\sE_{p,q} f-\sE_{p,q} g\| \leq \hat \eta_s(\|f-g\|)$
  and $\|f-g\| \leq \hat \eta_s(\|\sE_{p,q} f - \sE_{p,q} g\|)$.
\end{prp}
\begin{proof}
  Let $f,g \in N_1$.
  By \fref{lem:ExpPertProps}\ref{item:ExpPertDenseChangeCommute} we may
  assume that $|f|\vee|g| = \chi_S$ for some set $S \in \fB$, so
  $f(x),g(x) \in [-1,1]$.
  Let
  \begin{align*}
    h(x) & = \big| \sgn(f(x))|f(x)|^{p(x)/q(x)} -
    \sgn(g(x))|g(x)|^{p(x)/q(x)} \big| \\
    S_1 & = \left\{
      x \in S\colon h(x) \leq |f(x)-g(x)|^{A_sp(x)/q(x)}
    \right\} \\
    S_2 & = S \setminus S_1 \subseteq \left\{
      x \in S\colon h(x) \leq B_s|f(x)-g(x)|^{p(x)/q(x)}
    \right\}.
  \end{align*}
  We observe that as $\|f\|,\|g\| \leq 1$ we have $\mu(S) \leq 2$.
  Observe also that
  $A_sq(x)/s \leq q(x)/s \leq p(x)$
  and that
  $sq(x)/A_s \geq sq(x) \geq p(x)$.
  It follows that if $\|f-g\| \leq 1$ then:
  \begin{align*}
    \|f-g\|^{p(x)}
    & \leq \eta_s( \|f-g\| )^{q(x)/A_s} = \|f-g\|^{q(x)/s} \\
    & \leq \eta_s( \|f-g\| )^{q(x)} = \|f-g\|^{A_sq(x)/s}.
    \intertext{Otherwise $1 < \|f-g\| \leq 2$, and:}
    \|f-g\|^{p(x)}
    & \leq \eta_s( \|f-g\| )^{q(x)} = \|f-g\|^{sq(x)} \\
    & \leq \eta_s( \|f-g\| )^{q(x)/A_s} = \|f-g\|^{sq(x)/A_s}.
  \end{align*}

  Let $\gamma = \int_{S_1} \frac{ |f(x) - g(x)|^{p(x)} }{\|f-g\|^{p(x)}} d\mu(x)$
  and $a = \hat \eta_s( \|f-g\| ) = 2^{1-A_s}B_s\eta_s(\|f-g\|)/A_s$.
  Then:
  \begin{align*}
    \Theta'\left( \frac{\sE_{p,q} f-\sE_{p,q} g}{a} \right)
    & = \int_S \frac{h(x)^{q(x)}}{a^{q(x)}} d\mu(x) \\
    & \leq \int_{S_1} \frac{|f(x)-g(x)|^{A_sp(x)}}{a^{q(x)}} d\mu(x)
    + \int_{S_2} \frac{B_s^{q(x)}|f(x)-g(x)|^{p(x)}}{a^{q(x)}} d\mu(x)
  \end{align*}
  We work on each integral separately.
  \begin{align*}
    \int_{S_1} \frac{|f(x)-g(x)|^{A_sp(x)}}{a^{q(x)}} d\mu(x)
    & =
    \int_{S_1} \frac{\mu(S_1)A_s^{q(x)}}{(2^{1-A_s}B_s)^{q(x)}} \left(
      \frac{|f(x)-g(x)|^{p(x)}}{\eta_s( \|f-g\| )^{q(x)/A_s}}
    \right)^{A_s} \frac{d\mu(x)}{\mu(S_1)} \\
    & \leq \frac{\mu(S_1)}{2^{1-A_s}}
    \int_{S_1} \left(
      \frac{|f(x)-g(x)|^{p(x)}}{\|f-g\|^{p(x)}}
    \right)^{A_s} \frac{d\mu(x)}{\mu(S_1)} \\
    & \leq \frac{\mu(S_1)}{2^{1-A_s}} \left(
      \int_{S_1} \frac{|f(x)-g(x)|^{p(x)}}{\|f-g\|^{p(x)}} \frac{d\mu(x)}{\mu(S_1)}
    \right)^{A_s} \\
    & = \frac{\mu(S)^{1-A_s}}{2^{1-A_s}} \gamma^{A_s} \leq \gamma^{A_s}.
  \end{align*}
  And:
  \begin{align*}
    \int_{S_2} \frac{B_s^{q(x)}|f(x)-g(x)|^{p(x)}}{a^{q(x)}} d\mu(x)
    & =
    \int_{S_2} \frac{(A_sB_s)^{q(x)}}{(2^{1-A_s}B_s)^{q(x)}}
    \frac{|f(x)-g(x)|^{p(x)}}{\eta_s( \|f-g\| )^{q(x)}} d\mu(x) \\
    & \leq A_s \int_{S_2} \frac{|f(x)-g(x)|^{p(x)}}{\|f-g\|^{p(x)}} d\mu(x) = A_s(1-\gamma).
  \end{align*}
  Thus:
  \begin{gather*}
    \Theta'\left( \frac{\sE_{p,q} f-\sE_{p,q} g}{a} \right)
    \leq \gamma^{A_s} + A_s(1-\gamma) \leq 1
  \end{gather*}
  We conclude that
  $\|\sE_{p,q} f - \sE_{p,q} g\| \leq a = \hat \eta_s( \|f-g\| )$.
  Since $1/s \leq p(x)/q(x)\leq s$ as well we have
  $\|f-g\| = \|\sE_{q,p}\sE_{p,q} f - \sE_{q,p}\sE_{p,q} g\|
  \leq \hat \eta_s(\|\sE_{p,q} f - \sE_{p,q} g\|).$
\end{proof}

\begin{cor}
  The mapping $\sE_{p,q}\colon N_1 \to N'_1$ is uniformly continuous, the modulus of
  uniform continuity depending solely on $r$.
\end{cor}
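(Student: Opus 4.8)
The plan is to read the statement directly off \fref{prp:RhoDistChange}. Since $p,q\colon X \to [1,r]$, the pointwise ratio $q(x)/p(x)$ always lies in $[1/r,r]$, so the hypothesis of that proposition is met with the single choice $s = r$, and this choice is valid uniformly, independently of $p$, $q$, and the measure space. Hence for all $f,g \in N_1$ we have $\|\sE_{p,q} f - \sE_{p,q} g\| \leq \hat\eta_r(\|f-g\|)$.

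It then suffices to observe that $\hat\eta_r$ is itself an admissible modulus of uniform continuity. From the explicit formula $\hat\eta_r(x) = 2^{1-A_r}B_r\eta_r(x)/A_r$ it is a positive scalar multiple of $\eta_r$; the latter is continuous on $[0,2]$ (its two branches agree at $x=1$, both giving the value $1$), nondecreasing (both $x^{A_r/r}$ on $[0,1]$ and $x^r$ on $(1,2]$ are increasing), and satisfies $\eta_r(0)=0$. Given $\varepsilon>0$ we may therefore choose $\delta>0$ with $\hat\eta_r(\delta)\leq\varepsilon$, and monotonicity yields the implication $\|f-g\|\leq\delta \Longrightarrow \|\sE_{p,q} f - \sE_{p,q} g\|\leq\varepsilon$.

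The essential point is that $A_r$ and $B_r$, and hence $\hat\eta_r$, depend on $r$ alone and not on $p$, $q$, the measure space $(X,\fB,\mu)$, or the chosen $f,g$; the resulting modulus therefore depends solely on $r$, as asserted. I expect no genuine obstacle here: all of the analytic content was already carried out in \fref{prp:RhoDistChange}, and the only thing left to check is the formal bookkeeping that $\hat\eta_r$ qualifies as a modulus. (The uniform convergence $\hat\eta_s \to \id$ supplied by \fref{lem:EtaConvId} is not needed for this corollary, though it records the pleasant fact that the modulus collapses to the identity as the spread of the exponent shrinks to a point.)
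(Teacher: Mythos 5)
Your argument is correct and is essentially the paper's own proof: the paper likewise applies \fref{prp:RhoDistChange} with $s=r$ and simply records the explicit modulus $\Delta_r(\varepsilon) = \min\bigl\{ (2^{A_r-1}A_r\varepsilon/B_r)^{r/A_r},\, 1 \bigr\}$, which is nothing but the inverse of $\hat\eta_r$ on its lower branch, in place of your qualitative observation that $\hat\eta_r$ is continuous, nondecreasing and vanishes at $0$. Nothing further is needed.
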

\begin{proof}
  Define $\Delta_r(\varepsilon) = \min \left\{
    \left( 2^{A_r-1}A_r\varepsilon/B_r \right)^{r/A_r}, 1
  \right\}$.
  Then for all $\varepsilon > 0$ we have $\Delta_r(\varepsilon) > 0$ and
  $\|f-g\| < \Delta_r(\varepsilon) \Longrightarrow \|\sE_{p,q} f - \sE_{p,q} g\| \leq \varepsilon$.
\end{proof}

\begin{prp}
  \label{prp:RhoChanges}
  Let $\sE_{p,q}\colon N \to N'$ be as in \fref{dfn:RhoFunc}, and let
  $f,g \in N_1$.
  Then:
  \begin{enumerate}
  \item $\sE_{p,q}0 = 0$; $-\sE_{p,q} f = \sE_{p,q}(-f)$;
    $\sE_{p,q}(|f|) = \bigl|\sE_{p,q} f\bigr|$.
  \item $\bigl| \|f-g\| - \|\sE_{p,q} f - \sE_{p,q} g\| \bigr| \leq C_s$.
  \item $\|\sE_{p,q}\half[f+g] - \half[\sE_{p,q} f+\sE_{p,q} g]\| \leq 2C_s$.
  \end{enumerate}
\end{prp}
\begin{proof}
  The first item is clear.
  For the second we use \fref{prp:RhoDistChange}:
  \begin{align*}
    \|\sE_{p,q} f - \sE_{p,q} g\| - \|f-g\| & \leq \hat \eta_s( \|f-g\| ) - \|f-g\| \leq C_s, \\
    \|f-g\| - \|\sE_{p,q} f - \sE_{p,q} g\| & \leq \hat \eta_s( \|\sE_{p,q} f - \sE_{p,q} g\| ) - \|\sE_{p,q} f - \sE_{p,q} g\| \leq C_s.
  \end{align*}
  We may assume that $|f|\vee|g| = \chi_S$ for some measurable
  set $S$, so $\mu(S) \leq 2$.
  By definition of $C_s$ we have
  $
  \left|
    \sE_{p,q}\half[f+g](x) - \half[\sE_{p,q} f+\sE_{p,q} g](x)
  \right|
  \leq C_s
  $
  for $x \in S$, and we get:
  \begin{align*}
    \Theta\left( \frac{\sE_{p,q}\half[f+g] - \half[\sE_{p,q} f+\sE_{p,q} g]}{2C_s} \right)
    & \leq \int_S 2^{-q(x)} \, d\mu(x) \leq \half[\mu(S)] \leq 1.
  \end{align*}
  The third item follows.
\end{proof}

We now wish to define a perturbation system $\fp$ for
$\cL_{Bl}$-structures.
We do this by defining a $\fp(\varepsilon)$-perturbation of structures $N$
and $N'$ directly as a bijection $\theta\colon N \to N'$ such that for all
$f,g,h \in N$:
\begin{gather*}
  \theta0 = 0, \\
  \theta(-f) = -\theta f, \\
  \theta(|f|) = |\theta f|, \\
  \left| d\left( \half[f+g],h \right)
    - d\left( \half[\theta f+\theta g],\theta h \right) \right|
  \leq \varepsilon, \\
  \big| \|f\| - \|\theta f\| \big| \leq \varepsilon,
  \intertext{and:}
  e^{-\varepsilon e^\varepsilon} d(f,g)^{e^\varepsilon} \leq d(\theta f,\theta g)
  \leq e^\varepsilon d(f,g)^{e^{-\varepsilon}}.
\end{gather*}
(While for most symbols we can just allow to ``change by $\varepsilon$'', we need
to take special care with the distance symbol.)
This indeed defines a perturbation system, as it clearly verifies the
following characterisation:
\begin{fct}
  \label{fct:PertChar}
  Let $T$ be a theory, and assume that
  for each $r \in \bR^+$ and $M,N \in \Mod(T)$,
  $\Pert'_r(M,N)$ is a set of bijections of $M$ with $N$
  satisfying the following properties:
  \begin{enumerate}
  \item Monotonicity:
    $\Pert'_r(M,N) = \bigcap_{s>r} \Pert'_s(M,N)$.
  \item Non-degenerate reflexivity:
    $\Pert'_0(M,N)$ is the set of isomorphisms
    of $M$ with $N$.
  \item Symmetry:
    $f \in \Pert'_r(M,N)$ if and only $f^{-1} \in \Pert'_r(N,M)$.
  \item Transitivity:
    if
    $f \in \Pert'_r(M,N)$ and
    $g \in \Pert'_s(N,L)$ then
    $g \circ f \in \Pert'_{r+s}(M,L)$.
  \item Uniform continuity:
    for each $r \in \bR^+$, all members of $\Pert'_r(M,N)$, where
    $M,N$ vary over all models of $T$, satisfy a common modulus of
    uniform continuity.
  \item Ultraproducts:
    If $f_i \in \Pert'_r(M_i,N_i)$ for $i \in I$, and $\sU$ is
    an ultrafilter on $I$ then
    $\prod_\sU f_i \in \Pert'_r\bigl( \prod_\sU M_i, \prod_\sU N_i \bigr)$.
    (Note that $\prod_\sU f_i$ exists by the uniform continuity
    assumption).
  \item Elementary substructures:
    If $f \in \Pert'_r(M,N)$, $M_0 \preceq M$, and
    $N_0 = f(M_0) \preceq N$ then
    $f\rest_{M_0} \in \Pert'_r(M_0,N_0)$.
  \end{enumerate}
  Then there exists a unique perturbation system $\fp$ for $T$ such
  that $\Pert'_r(M,N) = \Pert_{\fp(r)}(M,N)$ for all $r$, $M$ and $N$.
\end{fct}
\begin{proof}
  \cite[Theorem~4.4]{BenYaacov:TopometricSpacesAndPerturbations}.
\end{proof}

Recall that given two $n$-types $p,q$ we say that
$d_\fp(p,q) \leq \varepsilon$ if there are
$\cL_{Bl}$-structures $N,N'$ and an $\varepsilon$-perturbation $\theta\colon N \to N'$ sending
a realisation of $p$ to one of $q$.

\begin{lem}
  \label{lem:PertEpsilonS}
  For every $\varepsilon > 0$ there exists $s > 1$ such that if
  $N = L_{p(\cdot)}(X,\fB,\mu)$, $N' = L_{q(\cdot)}(X,\fB,\mu)$ and
  $\sE_{p,q}\colon N \to N'$ is as in \fref{dfn:RhoFunc}
  (so in particular $1/s \leq p(x)/q(x) \leq s$ for almost all $x \in X$),
  then $\sE_{p,q}$ is a $\fp(\varepsilon)$-perturbation.
\end{lem}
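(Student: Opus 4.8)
The plan is to verify directly that, for $s > 1$ chosen sufficiently close to $1$, the bijection $\sE_{p,q}\colon N \to N'$ satisfies each of the defining clauses of a $\fp(\varepsilon)$-perturbation listed just before \fref{fct:PertChar}. The three algebraic clauses $\theta 0 = 0$, $\theta(-f) = -\theta f$ and $\theta(|f|) = |\theta f|$ hold for every $s$, being exactly \fref{prp:RhoChanges}(i). So the real work is to produce the quantitative bounds, which will all be governed by $C_s$, $A_s$ and $B_s$, and then to invoke $\lim_{s\to1}C_s = 0$, $\lim_{s\to1}A_s = 1$ and $\lim_{s\to1}B_s = 1$ to select $s$.

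For the norm clause, applying \fref{prp:RhoChanges}(ii) with $g = 0$ gives $\bigl|\,\|f\| - \|\sE_{p,q}f\|\,\bigr| \leq C_s$. For the convex-combination clause I would recall that $d(u,v) = \|\half[u-v]\|$ is half the Banach distance, set $m = \half[f+g] \in N_1$, and chain two estimates: $\bigl|\,\|\half[\sE_{p,q}f + \sE_{p,q}g] - \sE_{p,q}h\| - \|\sE_{p,q}m - \sE_{p,q}h\|\,\bigr| \leq 2C_s$ from \fref{prp:RhoChanges}(iii) and $\bigl|\,\|\sE_{p,q}m - \sE_{p,q}h\| - \|m - h\|\,\bigr| \leq C_s$ from \fref{prp:RhoChanges}(ii). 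Dividing by $2$ bounds the defect of the distance clause by $\tfrac{3}{2}C_s$, so requiring $C_s \leq \tfrac{2}{3}\varepsilon$ settles both of these clauses at once.

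The substantive point is the two-sided distance clause $e^{-\varepsilon e^\varepsilon}d(f,g)^{e^\varepsilon} \leq d(\sE_{p,q}f,\sE_{p,q}g) \leq e^\varepsilon d(f,g)^{e^{-\varepsilon}}$, which I would derive from \fref{prp:RhoDistChange}, namely $\|\sE_{p,q}f-\sE_{p,q}g\| \leq \hat\eta_s(\|f-g\|)$ together with its symmetric counterpart. Writing $\delta = d(f,g) = \half\|f-g\| \in [0,1]$ and $G_s(\delta) = \half\hat\eta_s(2\delta)$, the first estimate reads $d(\sE_{p,q}f,\sE_{p,q}g) \leq G_s(\delta)$, so the upper bound reduces to $G_s(\delta) \leq e^\varepsilon\delta^{e^{-\varepsilon}}$ on $[0,1]$. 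Unfolding $\hat\eta_s$ splits this into $\delta \leq \tfrac12$, where $G_s(\delta) = c_s\delta^{b_s}$ with $b_s = A_s/s$, and $\delta > \tfrac12$, where $G_s(\delta) = c_s'\delta^s$; in both ranges the multiplicative constants tend to $1$ and the exponents to $1$ as $s \to 1$. The key observation is that, since $\delta \leq 1$, the target $\delta^{e^{-\varepsilon}}$ dominates any power $\delta^a$ with $a \geq e^{-\varepsilon}$, and the two exponents $A_s/s$ and $s$ both exceed $e^{-\varepsilon}$ for $s$ near $1$ (the latter automatically, as $s>1$); hence the inequality holds once also $c_s, c_s' \leq e^\varepsilon$. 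The lower bound then comes for free: the symmetric estimate gives, by the identical computation with the two spaces interchanged, $d(f,g) \leq e^\varepsilon d(\sE_{p,q}f,\sE_{p,q}g)^{e^{-\varepsilon}}$, and solving this for $d(\sE_{p,q}f,\sE_{p,q}g)$ produces exactly $e^{-\varepsilon e^\varepsilon}d(f,g)^{e^\varepsilon}$ — which is precisely why the envelope is written in that asymmetric-looking form.

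The main obstacle is this last clause: fitting the two-variable behaviour of $\hat\eta_s$, which is only piecewise a pure power and carries multiplicative constants of the form $B_s 2^{\pm}/A_s$, inside the rigid exponential envelope $e^{\pm\varepsilon}(\cdot)^{e^{\mp\varepsilon}}$. The device that makes this painless is to exploit $\delta \leq 1$ so that raising to the smaller exponent $e^{-\varepsilon}$ both weakens the power and leaves slack to absorb the constant factor into $e^\varepsilon$; combined with obtaining the lower inequality by symmetry rather than by a fresh computation, this reduces the entire clause to the three scalar conditions $C_s \leq \tfrac{2}{3}\varepsilon$, $A_s/s \geq e^{-\varepsilon}$ and $\max\{c_s,c_s'\} \leq e^\varepsilon$, each satisfiable by taking $s > 1$ sufficiently close to $1$.
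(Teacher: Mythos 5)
Your proposal is correct and follows the same route as the paper, which proves this lemma by simply citing \fref{prp:RhoDistChange}, \fref{prp:RhoChanges} and $\lim_{s\to1}C_s=0$; you have filled in exactly the verification those citations leave implicit, including the rescaling from the Banach norm to the structure metric $d(x,y)=\|\half[x-y]\|$ and the observation that the asymmetric envelope $e^{-\varepsilon e^\varepsilon}d(f,g)^{e^\varepsilon}\leq d(\theta f,\theta g)\leq e^\varepsilon d(f,g)^{e^{-\varepsilon}}$ is designed so that the lower bound follows from the upper bound applied to $\sE_{q,p}$. The details all check out.
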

\begin{proof}
  By \fref{prp:RhoDistChange},
  \fref{prp:RhoChanges} and the fact that $\lim_{s \to 1} C_s = 0$.
\end{proof}

\begin{lem}
  \label{lem:FiniteApprox}
  Fix a compact $K \subseteq [1,r]$ and $s > 1$.
  Then there is a finite set $K_s \subseteq [0,1]$ such that for every
  atomless measure space $(X,\fB,\mu)$ and
  $p\colon X \to [1,r]$ with $\essrng(p) = K$ there exists
  $q\colon X \to [1,r]$ such that $\essrng(q) = K_s$ and
  for almost all $x \in X$: $1 \leq q(x)/p(x) \leq s$.
\end{lem}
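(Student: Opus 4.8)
The plan is to manufacture a fixed finite set $K_s\subseteq[1,r]$ of ``rounding targets'' that are multiplicatively $s$-dense above $K$, and then, for a given $p$, to build $q$ by rounding each value $p(x)$ \emph{up} to a nearby target, using atomlessness to guarantee that every target is actually attained so that $\essrng q$ is exactly $K_s$. The two properties I need from $K_s$ are: \emph{(coverage)} for every $v\in K$ there is $w\in K_s$ with $v\le w\le sv$, so that a legal value $q(x)=w\in[p(x),sp(x)]$ can always be chosen; and \emph{(safety)} for every $w\in K_s$ the set $p^{-1}\bigl([w/s,w]\bigr)$ has positive measure for \emph{every} $p$ with $\essrng p=K$, so that the value $w$ can be forced into the essential range of $q$. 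Once $K_s$ has both properties, $q$ is assembled by reserving, for each $w\in K_s$, a positive-measure piece of $p^{-1}([w/s,w])$ on which $q\equiv w$, and filling in the rest by the coverage property; atomlessness lets me make the finitely many reserved pieces pairwise disjoint.

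To build $K_s$ I would attach to each $v\in K$ a safe target $w_v\in[1,r]$ together with an open neighbourhood $U_v\ni v$ on which that single target works, i.e.\ $w_v\in[v',sv']$ for all $v'\in U_v\cap K$, and then extract a finite subcover. The choice of $w_v$ depends on the local shape of $K$ at $v$. If $v$ is isolated in $K$ I take $w_v=v$ and $U_v$ a ball meeting $K$ only in $\{v\}$; here safety holds because an isolated point of the essential range is necessarily an atom of the distribution of $p$, so $\mu\bigl(p^{-1}(\{v\})\bigr)>0$. If $v$ is non-isolated with $v<\max K$ I pick $w_v\in\bigl(v,\min(sv,r)\bigr)$ and $U_v=(w_v/s,w_v)$; then $v\in U_v\cap K$ witnesses $(w_v/s,w_v)\cap K\neq\emptyset$, and since a non-empty open interval meeting $\essrng p$ has positive preimage measure, safety holds, while every point of $U_v$ is covered by $w_v$. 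Finally for $v=\max K$ I set $w_v=\max K$ and $U_v=(\max K/s,+\infty)$: this covers everything in $K$ above $\max K/s$, and safety holds either because $\max K$ is an atom (if isolated) or because $(\max K/s,\max K)\cap K\neq\emptyset$ (if approached from below). Taking a finite subcover $U_{v_1},\dots,U_{v_m}$ of the compact set $K$ and setting $K_s=\{w_{v_1},\dots,w_{v_m}\}$ yields a finite subset of $[1,r]$ depending only on $K$ and $s$, with coverage and safety holding by construction.

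With $K_s=\{w_1,\dots,w_m\}$ in hand, each $A_i:=p^{-1}\bigl([w_i/s,w_i]\bigr)$ has positive measure by safety. Using atomlessness I would split the (positive-measure) atoms of the finite algebra generated by $A_1,\dots,A_m$ to extract pairwise disjoint positive-measure sets $E_i\subseteq A_i$, put $q\equiv w_i$ on $E_i$, and on the remainder $X\setminus\bigcup_i E_i$ set $q(x)=\min\{w\in K_s\colon p(x)\le w\le sp(x)\}$, which is well defined a.e.\ by coverage since $p(x)\in K$ a.e. Then $q\colon X\to[1,r]$ is measurable, satisfies $p(x)\le q(x)\le sp(x)$, hence $1\le q(x)/p(x)\le s$, and $\essrng q=K_s$: each $w_i$ occurs on the positive-measure set $E_i$, and no other value occurs because $q$ takes values in $K_s$ everywhere.

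I expect the only real obstacle to be the \emph{exact} equality $\essrng q=K_s$, rather than the easy inclusion $\essrng q\subseteq K_s$. The danger is a point of $K$ that is a one-sided limit \emph{from above} carrying no atom: rounding \emph{up} can never land on such a point, so it must not be chosen as a target but instead covered by a slightly larger one, which is exactly what the case distinction on $w_v$ arranges. The role of the safety property, together with atomlessness to realise all targets simultaneously on disjoint sets, is precisely to upgrade the trivial inclusion to the required equality.
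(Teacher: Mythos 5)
Your proof is correct and follows essentially the same route as the paper's: cover the compact set $K$ by finitely many multiplicatively $s$-short pieces, round $p$ up to finitely many right endpoints, and use positivity of the relevant preimage measures (plus atomlessness to produce disjoint positive-measure witnesses) to get $\essrng q$ exactly equal to $K_s$. Your case analysis on isolated versus non-isolated points and the explicit extraction of the disjoint sets $E_i$ merely spell out details that the paper's terser argument leaves implicit.
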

\begin{proof}
  By compactness we can cover $K$ with
  finitely many open intervals
  $[1,r] \subseteq \bigcup \{(a_i,b_i)\colon i < n\}$, with $1 < b_i/a_i \leq s$.
  We may assume that $K\cap(a_i,b_i) \neq \emptyset$ for all $i < n$.
  We then define $K_s = \{b_i\colon i < n\}$.

  Assume now that $(X,\fB,\mu)$ is atomless and $p\colon X \to [1,r]$ satisfies
  $\essrng(p) = K$.
  We can then split $X$ into a finite disjoint union of positive
  measure sets $X = \bigcup_{i<n} X_i$ such that the essential range of
  $p_i = p\rest_{X_i}$ is contained in $(a_i,b_i)$.
  Define $q(x) = b_i$ when $x \in X_i$.
  Then $q$ is as required.
\end{proof}

\begin{fct}
  \label{fct:LpCatStab}
  For $K$ consisting of a single point, the theory
  $\Th(\cA\cN_K)$ is $\aleph_0$-categorical and $\aleph_0$-stable.
\end{fct}
\begin{proof}
  \cite{BenYaacov-Berenstein-Henson:LpBanachLattices}.
\end{proof}

\begin{lem}
  \label{lem:FiniteEssRngCatStab}
  Let $K \subseteq [1,r]$ be finite.
  Then $\Th(\cA\cN_K)$ is $\aleph_0$-categorical and $\aleph_0$-stable.
\end{lem}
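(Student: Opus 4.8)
The plan is to reduce everything to the single-exponent case, \fref{fct:LpCatStab}, by splitting a model into finitely many classical $L_{p_i}$-bands. Write $K = \{p_1,\dots,p_n\}$. Every model of $\Th(\cA\cN_K)$ is of the form $N = L_{p(\cdot)}(X,\fB,\mu)$ with $\essrng p = K$ (by \fref{fct:FixedEssRng}, atomless spaces being infinite-dimensional), so putting $X_i = \{x : p(x) = p_i\}$ yields a partition of $X$ into atomless sets of positive measure and a lattice direct sum $N = \bigoplus_{i\le n} N_i$ with $N_i = L_{p_i}(X_i,\fB\rest_{X_i},\mu\rest_{X_i})$ an atomless classical $L_{p_i}$-space, i.e.\ $N_i \models \Th(\cA\cN_{\{p_i\}})$. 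All of the $\cL_{Bl}$-structure of $N$ is recovered from the pieces by fixed continuous recipes: the operations $-$, $|\cdot|$ and $\half[x+y]$ act componentwise, and the norm is the Nakano combination of the component norms, i.e.\ $\|f\|$ is the unique solution of $\sum_i (\|P_i f\|/\|f\|)^{p_i} = 1$, where $P_i f = f\chi_{X_i}$. Since $\Theta$ is definable (\fref{cor:DefModK}) I may use it freely, and I would first check this decomposition is definable: for a positive $h$ supported in $X_i$ one has $\Theta(\alpha h) = \alpha^{p_i}\Theta(h)$ for every scalar $\alpha$, and as the functions $\alpha \mapsto \alpha^{p_j}$ are linearly independent this scaling law singles out the band $N_i = \{h : \forall\alpha\ \Theta(\alpha h) = \alpha^{p_i}\Theta(h)\}$, whence each band and each projection $P_i$ (positive part $\sup\{g : 0\le g\le f,\ g\in N_i\}$) is uniformly definable.

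For $\aleph_0$-categoricity I would show any two separable models $N$ and $N'$ are isomorphic. Both decompose over the same $K$ into separable atomless $L_{p_i}$-spaces $N_i$ and $N'_i$; by \fref{fct:LpCatStab} each pair admits an isometric lattice isomorphism $\theta_i\colon N_i \to N'_i$, which I glue into $\theta = \bigoplus_i \theta_i\colon N \to N'$. Because on both sides every symbol is given by the same componentwise recipe above, $\theta$ preserves $-$, $|\cdot|$, $\half[x+y]$ and $\|\cdot\|$, so it is an isomorphism of $\cL_{Bl}$-structures and $\Th(\cA\cN_K)$ has a unique separable model.

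For $\aleph_0$-stability I would show $S_n(A)$ is separable for separable $A$. Passing to $\cL_{Bl}^\Theta$ (legitimate as $\Theta$ is definable), $\Th(\cA\cN^\Theta_K)$ eliminates quantifiers, so a type is determined by its quantifier-free data; since each tuple is interdefinable with the tuple of its band-components $(P_i \bar f)_i$ and each atomic value is a fixed continuous function of the componentwise ones, the assignment $p \mapsto (p_i)_i$ is a bijection of $S_n(A)$ with $\prod_{i\le n} S_n^{(i)}(A_i)$, where $A_i = P_i A$ and $S_n^{(i)}$ is the type space of $\Th(\cA\cN_{\{p_i\}})$. Moreover the distance between types is the same fixed continuous function $\Phi$ of the component distances as for the norm, so this bijection is a uniform homeomorphism. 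Each factor is separable by the $\aleph_0$-stable half of \fref{fct:LpCatStab}, each $A_i$ is separable, and a finite product of separable metric spaces is separable; hence $S_n(A)$ is separable.

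The main obstacle is this definable coordinatization: verifying that the bands and the projections $P_i$ are genuinely uniformly definable, and that after quantifier elimination the type space really does split as the finite product of the single-exponent type spaces with distance governed by the fixed Nakano combination $\Phi$. Granting this, both $\aleph_0$-categoricity and $\aleph_0$-stability follow by gluing and by the triviality that finite products of separable spaces are separable.
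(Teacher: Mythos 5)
Your proposal follows essentially the same route as the paper's proof: decompose $N$ into the orthogonal bands $N_i=\chi_{X_i}N$ over the level sets of $p$, reduce $\aleph_0$-categoricity to \fref{fct:LpCatStab} by gluing band-wise isomorphisms (the norm and lattice structure being recovered componentwise), and reduce $\aleph_0$-stability to metric separability of the component type spaces via quantifier elimination after naming $\Theta$. The only real divergence is your extra effort to make the bands and projections $P_i$ uniformly definable; the paper sidesteps this by using the decomposition purely structurally, so that step, while not wrong, is not needed for the argument to go through.
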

\begin{proof}
  Let $K = \{p_i\colon i < n\}$, $p_0 < \ldots < p_{n-1}$.
  If $N = L_{p(\cdot)}(X,\fB,\mu) \in \cA\cN_K$
  then $X$ can be written as a disjoint union
  $X = \bigcup_{i<n}X_i$ where $X_i \in \fB$, $\mu(X_i) > 0$
  and $p\rest_{X_i} \equiv p_i$ a.e.
  For $i < n$ let $N_i$ be the Banach lattice $\chi_{X_i}N$.
  Thus the $N_i$ are orthogonal bands in $N$ and
  $N = \bigoplus_{i<n} N_i$.
  Since we can recover $\Theta$ from the norm on each $N_i$
  we can recover $\Theta$ on $N$,
  and thus we can recover the norm on $N$.
  Similarly, as the $N_i$ are orthogonal bands we can recover the
  lattice structure on $N$ from that of $N_i$.

  Now, if $N$ is separable (and atomless), each $N_i$ is separable
  and atomless, and thus uniquely determined by $p_i$ up to
  isomorphism, whereby $N$ is uniquely
  determined by $K$.
  This proves $\aleph_0$-categoricity.

  Similarly, let $N' \preceq N$ be a separable elementary sub-model and
  let $N'_i = N' \cap N_i$.
  By $\aleph_0$-stability of $\Th(N_i)$, $\tS^{N_i}_\ell(N'_i)$ is metrically
  separable for each $i$.
  Now let $\bar f = f^0,\ldots,f^{\ell-1} \in N$, and let
  $f^j = \sum_{i<n} f^j_k$ where $f^j_i \in N_i$.
  Naming $\Theta$ and using quantifier elimination we see that
  $\tp^N(\bar f/N')$ is uniquely determined by
  $(\tp^{N_i}(\bar f_i/N'_i)\colon i < n)$, and we might as well write
  $\tp^N(\bar f/N') = \sum_{i<n} \tp^{N_i}(\bar f_i/N'_i)$.
  If $q = \sum_{i<n} q_i$ and $q' = \sum_{i<n} q'_i$ are two such
  decompositions then we have
  $d(q,q') \leq \sum_{i<n} d(q_i,q'_i)$.
  Thus $\tS^N_\ell(N')$ is metrically separable.
\end{proof}

We can now conclude:
\begin{thm}
  \label{thm:PertCatStab}
  The theory $\Th(\cA\cN_{\subseteq[1,r]})$ is $\fp$-$\aleph_0$-stable, and every
  completion thereof (which is of the form $\Th(\cA\cN_K)$)
  is $\fp$-$\aleph_0$-categorical.
\end{thm}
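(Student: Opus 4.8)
The plan is to deduce both properties from the three preceding lemmas. The guiding principle is that every separable model of $\Th(\cA\cN_{\subseteq[1,r]})$ is, for every $\varepsilon > 0$, joined by a $\fp(\varepsilon)$-perturbation to a separable atomless Nakano space whose exponent has \emph{finite} essential range, and that on the finite essential range classes both properties already hold \emph{without} perturbation by \fref{lem:FiniteEssRngCatStab}. Concretely, \fref{lem:PertEpsilonS} turns closeness of exponents into small perturbations via the maps $\sE_{p,q}$, and \fref{lem:FiniteApprox} produces, for a given compact essential range $K$ and tolerance $s$, a finite $K_s$ reachable from $K$ by such a map. Recall that models in $\cA\cN_{\subseteq[1,r]}$ are atomless, hence of dimension $\geq 2$, so by \fref{fct:FixedEssRng} the essential range of a model is exactly the $K$ determining its completion, which is what lets us apply \fref{lem:FiniteApprox}.

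For $\fp$-$\aleph_0$-categoricity, I would fix a completion $\Th(\cA\cN_K)$ and two separable models, presented as $M = L_{p(\cdot)}(X,\fB,\mu)$ and $N = L_{p'(\cdot)}(X',\fB',\mu')$. Given $\varepsilon > 0$, choose $s > 1$ by \fref{lem:PertEpsilonS} so that any $\sE$ with exponent ratio in $[1/s,s]$ is a $\fp(\varepsilon/2)$-perturbation, and let $K_s$ be as in \fref{lem:FiniteApprox}. Applying \fref{lem:FiniteApprox} to $p$ and to $p'$ gives exponents $q,q'$ with $\essrng q = \essrng q' = K_s$ and $\fp(\varepsilon/2)$-perturbations $\sE_{p,q}\colon M \to M'$, $\sE_{p',q'}\colon N \to N'$ onto separable models $M',N' \in \cA\cN_{K_s}$. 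Since $K_s$ is finite, $\Th(\cA\cN_{K_s})$ is $\aleph_0$-categorical (\fref{lem:FiniteEssRngCatStab}), so there is an isomorphism $\sigma\colon M' \to N'$, an honest $\fp(0)$-perturbation. The composite $\sE_{p',q'}^{-1} \circ \sigma \circ \sE_{p,q}$ is then a $\fp(\varepsilon)$-perturbation of $M$ with $N$ by the symmetry and transitivity clauses of \fref{fct:PertChar}. As $\varepsilon$ was arbitrary, $M$ and $N$ are perturbation-isomorphic to any prescribed tolerance, which is $\fp$-$\aleph_0$-categoricity.

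For $\fp$-$\aleph_0$-stability, I would fix a separable $M \models \Th(\cA\cN_{\subseteq[1,r]})$ and show each type space $\tS_n(M)$ is separable for the perturbed metric $d_\fp$; it suffices to build, for each $k$, a countable $d_\fp$-$\tfrac1k$-dense subset and take the union. Fixing $\varepsilon = \tfrac1{2k}$, choose $s$ and a $\fp(\varepsilon)$-perturbation $\sE_{p,q}\colon M \to M'$ with $M' \in \cA\cN_{K_s}$ separable as above. Since $\Th(M') = \Th(\cA\cN_{K_s})$ is $\aleph_0$-stable (\fref{lem:FiniteEssRngCatStab}), the space $\tS_n(M')$ is separable in the ordinary metric $d$; fix a countable $d$-dense set $\{\pi'_j\}$ there. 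The perturbation $\sE_{p,q}$ lifts to elementary extensions (ultraproduct and elementary-substructure clauses of \fref{fct:PertChar}), and therefore induces a transport of types; pulling each $\pi'_j$ back through it gives a countable family $\{\pi_j\}$ in $\tS_n(M)$. For an arbitrary $\pi \in \tS_n(M)$ with perturbation-image $\pi'$ over $M'$, choosing $\pi'_j$ with $d(\pi',\pi'_j)$ small and chaining the two $\fp(\varepsilon)$-perturbations on either side with the $d$-estimate in the middle yields $d_\fp(\pi,\pi_j) \leq 2\varepsilon + d(\pi',\pi'_j)$; since $\{\pi'_j\}$ is $d$-dense, this shows $\{\pi_j\}$ is $d_\fp$-dense to within $2\varepsilon = \tfrac1k$. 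Taking the union over $k$ produces a countable $d_\fp$-dense subset of $\tS_n(M)$.

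The step I expect to be the main obstacle is the type transport used in the stability argument: one must make precise that a $\fp(\varepsilon)$-perturbation $\sE_{p,q}\colon M \to M'$ of models induces a map $\sE_*$ on $\tS_n$ satisfying the triangle-type estimate $d_\fp(\pi_1,\pi_2) \leq 2\varepsilon + d(\sE_* \pi_1, \sE_* \pi_2)$ that converts ordinary $d$-density over $M'$ into $d_\fp$-density over $M$. This is precisely the content of the general perturbation machinery of \cite{BenYaacov:Perturbations}: the clauses of \fref{fct:PertChar} guarantee that perturbations extend along elementary extensions and pass to ultraproducts, so that a perturbation between models lifts to a perturbation between any pair of realisations of corresponding types; the estimate itself is then the triangle inequality for $d_\fp$ combined with the fact that an isomorphism is a $\fp(0)$-perturbation. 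Everything else is a routine assembly of \fref{lem:PertEpsilonS}, \fref{lem:FiniteApprox} and \fref{lem:FiniteEssRngCatStab}.
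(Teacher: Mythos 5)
Your proposal is correct and follows essentially the same route as the paper: combine \fref{lem:PertEpsilonS} and \fref{lem:FiniteApprox} to perturb any separable model into the class $\cA\cN_{K_s}$ with $K_s$ finite, apply \fref{lem:FiniteEssRngCatStab} there, and compose perturbations using \fref{fct:PertChar}. The paper dispatches the stability half with the single word ``Similarly''; your type-transport argument is a reasonable unpacking of that step, not a different method.
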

\begin{proof}
  Combining \fref{lem:PertEpsilonS} and \fref{lem:FiniteApprox} we see
  that for every $\varepsilon > 0$ there is a finite set $K' \subseteq [1,r]$ such that
  every separable $N,N' \in \cA\cN_K$ admit $\fp(\varepsilon/2)$-perturbations with
  separable $\tilde N,\tilde N \in \cA\cN_{K'}$, respectively.
  But $\tilde N \cong \tilde N'$ by \fref{lem:FiniteEssRngCatStab}, so $N$
  and $N'$ admit a $\fp(\varepsilon)$-perturbation.

  Similarly for $\fp$-$\aleph_0$-stability.
\end{proof}

\begin{cor}
  The theory $\Th(\cA\cN_{\subseteq[1,r]})$ is stable.
\end{cor}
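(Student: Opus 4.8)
The plan is to obtain stability as a direct consequence of the $\fp$-$\aleph_0$-stability established in \fref{thm:PertCatStab}, via the general principle that stability is invariant under the perturbation system $\fp$. Write $T = \Th(\cA\cN_{\subseteq[1,r]})$.

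Since $\fp$-$\aleph_0$-stability is the case $\lambda = \aleph_0$ of $\fp$-$\lambda$-stability, $T$ is in particular $\fp$-stable (i.e.\ $\fp$-$\lambda$-stable for some infinite $\lambda$). The one substantive step is then to deduce ordinary stability from $\fp$-stability. This is not automatic: because the perturbation metric $d_\fp$ is coarser than the true metric $d$ on type spaces, controlling the $d_\fp$-density character of $S_n(A)$ is a priori weaker than controlling its $d$-density character, which is what stability demands.

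The reason the implication nevertheless holds is that instability is witnessed by the order property, which yields a formula $\varphi$ and parameters exhibiting a fixed positive gap between its ``large'' and ``small'' values along an infinite linear order; a $\fp(\varepsilon)$-perturbation with $\varepsilon$ small cannot close such a gap, so the order property persists after perturbing. Consequently an unstable theory can never be $\fp$-stable, and the contrapositive gives that $T$ is stable. The main obstacle is exactly this equivalence of stability and $\fp$-stability; it is part of the general perturbation theory of \cite{BenYaacov:Perturbations,BenYaacov:TopometricSpacesAndPerturbations} and not special to Nakano spaces, so in the write-up I would simply invoke it with a precise citation rather than reprove it here.
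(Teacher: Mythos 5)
Your proposal is correct and follows the same route as the paper: both deduce stability from the $\fp$-$\aleph_0$-stability of \fref{thm:PertCatStab} by invoking the general fact that $\lambda$-$\fp$-stability implies stability, which the paper cites as \cite[Proposition~4.11]{BenYaacov:TopometricSpacesAndPerturbations}. Your heuristic for why that implication holds (the order property exhibits a fixed gap that a small perturbation cannot close) is a fair sketch of the cited argument, and deferring to the reference is exactly what the paper does.
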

\begin{proof}
  By
  \cite[Proposition~4.11]{BenYaacov:TopometricSpacesAndPerturbations}
  $\lambda$-$\fp$-stability
  implies stability.
  (See \cite[Section~4.3]{BenYaacov:TopometricSpacesAndPerturbations}
  for more properties and
  characterisations of $\aleph_0$-stability up to perturbation.)
\end{proof}

\begin{rmk*}
  It is in fact also true that the theory $\Th(\cN_{\{p_0\}})$ (i.e.,
  constant $p$, but possibly with atoms) is $\aleph_0$-stable,
  although this fact is not proved anywhere in the literature
  at the time of writing.
  By the same reasoning, the theory $\Th(\cN_{\subseteq[1,r]})$ is
  $\fp$-$\aleph_0$-stable and in particular stable.
\end{rmk*}

\appendix
\section{Some basic continuous model theory}
\label{apx:ContModTh}

\subsection{Definability and monotonicity}

\begin{thm}[Beth's definability theorem for continuous logic]
  \label{thm:Beth}
  Let $\cL_0 \subseteq \cL$ be continuous signatures with the same
  sorts (i.e., $\cL$ does not add new sorts on top of those existing
  in $\cL_0$) and $T$ an $\cL$-theory
  such that every $\cL_0$-structure $M_0$ admits at most a single
  expansion to an $\cL$-structure $M$ which is a model of $T$.
  Then every symbol in $\cL$ admits an explicit $\cL_0$-definition in
  $T$.
  That is to say that for every predicate symbol $P(\bar x) \in \cL$
  is equal in all models of $T$ to some $\cL_0$-definable predicate
  $\varphi_P(\bar x)$,
  and for every function symbol $f(\bar x) \in \cL$
  the predicate $d(f(\bar x),y)$
  is equal in all models of $T$ to some $\cL_0$-definable predicate
  $\varphi_f(\bar x)$.
\end{thm}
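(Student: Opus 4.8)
The plan is to convert the single-expansion hypothesis into a statement about automorphisms of a saturated model, and then to extract the definition by factoring through the space of $\cL_0$-types. It suffices to treat predicate symbols: for a function symbol $f(\bar x)$ one applies the predicate case to the $\cL$-definable predicate $d(f(\bar x),y)$, which is again implicitly defined by the single-expansion hypothesis, and the resulting $\cL_0$-definable predicate in the variables $(\bar x,y)$ is the desired $\varphi_f$.

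The key step reformulates the hypothesis as a statement about a fixed sufficiently saturated and strongly homogeneous model $\mathbb{M} \models T$ (a monster model): in $\mathbb{M}$ the $\cL_0$-type of a tuple already determines its full $\cL$-type. Suppose $\tp_{\cL_0}(\bar a) = \tp_{\cL_0}(\bar b)$. The $\cL_0$-reduct $\mathbb{M}_0$ of a saturated strongly homogeneous $\cL$-structure is again saturated and strongly homogeneous as an $\cL_0$-structure, so the partial $\cL_0$-elementary map $\bar a \mapsto \bar b$ extends to an $\cL_0$-automorphism $\sigma$ of $\mathbb{M}_0$. Transporting the $\cL$-structure of $\mathbb{M}$ along $\sigma$ yields an $\cL$-structure $\mathbb{M}^\sigma \models T$; since $\sigma$ is an $\cL_0$-automorphism of $\mathbb{M}_0$, the $\cL_0$-reduct of $\mathbb{M}^\sigma$ is again $\mathbb{M}_0$. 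By the single-expansion hypothesis $\mathbb{M}^\sigma = \mathbb{M}$, i.e.\ $\sigma$ is in fact an $\cL$-automorphism of $\mathbb{M}$. As $\sigma(\bar a) = \bar b$ we get $\tp_\cL(\bar a) = \tp_\cL(\bar b)$, and in particular $P^{\mathbb{M}}(\bar a) = P^{\mathbb{M}}(\bar b)$ for every predicate symbol $P \in \cL$.

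With this in hand the definition is extracted by a soft topological argument. Each $P \in \cL$ is a continuous function $\tilde P$ on the type space $\tS^{\cL}_{\bar x}(T)$, and the previous paragraph says precisely that $\tilde P$ is constant on the fibres of the restriction map $\pi\colon \tS^{\cL}_{\bar x}(T) \to \tS^{\cL_0}_{\bar x}(T)$, whose image is the full $\cL_0$-type space of the reduct theory $T\rest_{\cL_0}$. Since $\pi$ is a continuous surjection of compact Hausdorff spaces it is closed, hence a quotient map, so $\tilde P$ factors as $\tilde P = F \circ \pi$ for a unique continuous $F$ on $\tS^{\cL_0}_{\bar x}(T)$. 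Continuous functions on this $\cL_0$-type space are exactly the $\cL_0$-definable predicates modulo $T\rest_{\cL_0}$; setting $\varphi_P := F$ therefore produces an $\cL_0$-definable predicate with $T \models P(\bar x) = \varphi_P(\bar x)$, as required.

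The main obstacle is the key lemma of the second paragraph, and more precisely the two model-theoretic facts feeding it: that the $\cL_0$-reduct of a monster model of $T$ is itself saturated and strongly homogeneous (so the $\cL_0$-elementary map $\bar a \mapsto \bar b$ really does extend to an $\cL_0$-automorphism of the reduct), and that transport along an $\cL_0$-automorphism of the reduct leaves the $\cL_0$-reduct unchanged (so uniqueness of expansion applies verbatim to $\mathbb{M}^\sigma$). Once these are secured the remainder is the routine dictionary between continuous functions on type spaces and definable predicates; I would also note in passing that the classical Craig-interpolation route via two disjoint copies $\cL',\cL''$ of $\cL$ sharing $\cL_0$ (yielding $T'\cup T''\models\sup_{\bar x}\lvert P'(\bar x)-P''(\bar x)\rvert=0$) is available as an alternative, but it forces one to reprove continuous interpolation, whereas the saturation argument above only uses tools already standard for continuous logic.
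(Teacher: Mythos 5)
Your proof is correct, and it establishes the same pivotal fact as the paper --- that modulo $T$ the $\cL_0$-type of a tuple already determines its full $\cL$-type, i.e.\ the restriction map $\tS_n(T)\to\tS_n(\cL_0)$ is injective --- but by a genuinely different route. The paper avoids monster models entirely: given $M\models p(\bar a)$ and $M'\models p'(\bar a')$ with the same $\cL_0$-type, it amalgamates them over $\bar a=\bar a'$ and builds an alternating chain $M'_0\preceq_{\cL_0}M_1\preceq_{\cL_0}M'_1\preceq_{\cL_0}\cdots$ with $M_i\preceq M_{i+1}$ and $M'_i\preceq M'_{i+1}$; the union is one set carrying two $\cL$-expansions of a single $\cL_0$-structure, both models of $T$, so the uniqueness hypothesis forces $p=p'$. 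You replace the chain by an $\cL_0$-automorphism $\sigma$ of the reduct of a monster model and transport the $\cL$-structure along $\sigma$; the transport step and the identification $\mathbb{M}^\sigma=\mathbb{M}$ are fine. The one point to secure, which you correctly flag, is strong homogeneity of the \emph{reduct}: this holds if the monster is genuinely saturated (a $\kappa$-saturated model of density character $\kappa$ is strongly $\kappa$-homogeneous, and $\kappa$-saturation passes to reducts), but the existence of such a model needs special models or a harmless set-theoretic assumption, whereas the paper's chain argument uses nothing beyond compactness --- that is what the longer construction buys. The concluding step differs only cosmetically: you factor $P$ through the restriction map, which is closed and hence a quotient map onto its image, while the paper notes that an injective continuous map of compact Hausdorff spaces is an embedding and applies Tietze to extend $P$ from the image to all of $\tS_n(\cL_0)$; both land on the same $\cL_0$-definable predicate, and the reduction of function symbols to the predicate $d(f(\bar x),y)$ is identical in the two arguments.
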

\begin{proof}
  For convenience we shall assume that the language is single sorted,
  but the same proof holds for a many sorted language.

  Let $P \in \cL$ be an $n$-ary function symbol, and consider the
  mapping $\theta_n\colon \tS_n(T) \to \tS_n(\cL_0)$,
  the latter being the space
  of all complete $n$-types in the language $\cL_0$.
  It is known that $\theta_n$ is continuous, and we claim it is injective.

  Indeed, let $p,p' \in \tS_n(T)$ be such that
  $\theta_n(p) = \theta_n(p') = q$.
  Let $M \models p(\bar a)$ and $M' \models p'(\bar a')$, so
  Then $\tp^{\cL_0}(\bar a) = \tp^{\cL_0}(\bar a') = q$.
  \begin{clm}
    There exists an elementary extension $M \preceq M_1$ and an
    $\cL_0$-elementary embedding $M' \hookrightarrow M_1$
    sending $\bar a'$ to
    $\bar a$.
  \end{clm}
  \begin{clmprf}
    We need to verify that
    $\Th_{\cL(M)}(M) \cup \Th_{\cL_0}(M') \cup \{\bar a = \bar a'\}$ is
    consistent.
    But the assumptions on the types tell us precisely that
    $\Th_{\cL_0}(M') \cup \{\bar a = \bar a'\}$ is approximately finitely
    satisfiable in $(M,\bar a)$.
  \end{clmprf}

  we shall identify $M'$ as a set with its image in $M_1$, and in
  particular assume that $\bar a = \bar a'$.

  \begin{clm}
    Let $N$ and $N'$ be two $\cL$-structures,
    and assume that $N \preceq_{\cL_0} N'$ (but needn't even be an
    $\cL$-substructure).
    Then there exists $N'' \succeq N$ such that $N' \preceq_{\cL_0} N''$.
  \end{clm}
  \begin{clmprf}
    The assumption $N \preceq_{\cL_0} N'$ implies that $\Th_{\cL_0(N')}(N')$
    is approximately finitely satisfiable in $N$,
    so $\Th_{\cL(N)}(N) \cup \Th_{\cL_0(N')}(N')$ is consistent.
  \end{clmprf}

  Using the claim we can extend the pair $M' = M'_0 \preceq_{\cL_0} M_1$ to
  a chain of $\cL$-structures
  We now construct a sequence of structures
  $M'_0
  \preceq_{\cL_0} M_1
  \preceq_{\cL_0} M'_1
  \preceq_{\cL_0} M_2
  \preceq_{\cL_0} M'_2 \ldots$
  such that $M_i \preceq M_{i+1}$ and $M'_i \preceq M'_{i+1}$.

  Let $M_\omega = \bigcup M_i$, $M'_\omega = \bigcup M'_i$.
  Then both $M_\omega$ and $M'_\omega$ are models of $T$ and have the same
  $\cL_0$-reduct, and are therefore the same.
  It follows that
  $p = \tp^{M_\omega}(\bar a) = \tp^{M'_\omega}(\bar a) = p'$.

  Once we have established that $\theta_n$ is an injective continuous
  mapping between compact Hausdorff spaces it is necessarily an
  embedding (i.e., a homeomorphism with its image).
  We may identify the predicate $P$ with a continuous function
  $P\colon \tS_n(T) \to [0,1]$.
  By Tietze's extension theorem there exists a continuous function
  $\varphi_P \colon \tS_n(\cL_0) \to [0,1]$ such that
  $P = \varphi_P \circ \theta_n$.
  Then $\varphi_P$ is the required $\cL_0$-definable predicate.

  If $f$ is a function symbol, apply the preceding argument to
  $d(f(\bar x),y)$.
\end{proof}

\begin{dfn}
  \label{dfn:IncDecFormula}
  Let $T$ be a theory, $\varphi(\bar x)$ a definable predicate.
  We say that $\varphi$ is \emph{increasing} (\emph{decreasing}) in $T$
  if whenever
  $M \subseteq N$ are both models of $T$ and $\bar a \in M$ we have
  $\varphi(\bar a)^M \leq \varphi(\bar a)^N$
  ($\varphi(\bar a)^M \geq \varphi(\bar a)^N$).
  We say that $\varphi$ is \emph{constant} in $T$ if it is both increasing
  and decreasing in $T$.
\end{dfn}

\begin{dfn}
  \label{dfn:SupInfFormula}
  A \emph{$\sup$-formula} is a formula of the form
  $\sup_{\bar y} \varphi(\bar x,\bar y)$ where $\varphi$ is
  quantifier-free.

  A \emph{$\sup$-definable predicate} is a definable predicate
  which can be written syntactically as
  $\flim \varphi_n(\bar x)$ where each $\varphi_n$ is a
  $\sup$-formula.
  (See \cite[Definition~3.6]{BenYaacov-Usvyatsov:CFO} and subsequent
  discussion for the definition and properties of the forced limit
  operation $\flim$.)
  Notice that every such predicate is equal to a uniform limit of
  $\sup$-formulae.

  We make the analogous definitions for $\inf$.
\end{dfn}

\begin{thm}
  \label{thm:MonForm}
  Let $T$ be a theory, $\varphi(\bar x)$ a definable predicate.
  Then $\varphi$ is increasing (decreasing) in $T$ if and only if
  $\varphi$ is equivalent modulo $T$ to a $\sup$-definable
  ($\inf$-definable) predicate.
\end{thm}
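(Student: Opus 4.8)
The plan is to prove the two equivalences in parallel, treating in detail only the increasing/$\sup$ case: the decreasing/$\inf$ case follows by applying it to $1-\varphi$, since $\varphi$ is decreasing iff $1-\varphi$ is increasing and the connective $1-\sup_{\bar y}\theta=\inf_{\bar y}(1-\theta)$ interchanges the two classes. One direction is routine. Embeddings preserve quantifier-free formulae exactly, so for a $\sup$-formula $\sup_{\bar y}\theta(\bar x,\bar y)$ and $M\subseteq N$ models of $T$ with $\bar a\in M$, the supremum over $\bar y\in M$ is at most that over $\bar y\in N$; hence $\sup$-formulae are increasing, and since increasingness passes to uniform limits, every $\sup$-definable predicate is increasing.

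For the converse assume the $n$-ary $\varphi$ is increasing, and let $\Psi$ be the set of $\sup$-definable predicates $\psi$ with $T\models\varphi\le\psi$. First note that $\sup$-formulae, hence $\sup$-definable predicates, are closed under finite $\min$ and $\max$; the only non-obvious point is $\min$, which follows from
\[ \min\big(\sup_{\bar y}\theta_1,\sup_{\bar z}\theta_2\big)=\sup_{\bar y,\bar z}\min(\theta_1,\theta_2), \]
valid because the two suprema range over independent variables. Thus $\Psi$ is closed under finite $\min$. I claim it suffices to establish, on the type space $\tS_n(T)$, the identity
\[ \inf_{\psi\in\Psi}\psi(p)=\varphi(p)\qquad\text{for every }p\in\tS_n(T). \]
Indeed, $\ge$ is immediate, and granting equality the members of $\Psi$ form a family of continuous functions on the compact space $\tS_n(T)$, closed under finite $\min$, with pointwise infimum the continuous function $\varphi$; a standard covering argument (for each $p$ choose $\psi_p\in\Psi$ with $\psi_p<\varphi+\varepsilon$ on a neighbourhood of $p$, extract a finite subcover, and take the $\min$) yields, for each $\varepsilon>0$, a single $\psi\in\Psi$ with $\varphi\le\psi\le\varphi+\varepsilon$. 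Letting $\varepsilon\to0$ exhibits $\varphi$ as a uniform limit of $\sup$-definable predicates, hence as one itself.

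The crux is therefore $\inf_{\psi\in\Psi}\psi(p)\le\varphi(p)$. Fix $p$, realized by $\bar a$ in $M\models T$, and $\delta>0$, and consider
\[ \Delta(\bar x)=T\cup\{\psi(\bar x)\le\psi(p)\colon\psi\text{ a }\sup\text{-formula}\}\cup\{\varphi(\bar x)\ge\varphi(p)+\delta\}. \]
I argue by dichotomy. If $\Delta$ is satisfiable, say by $\bar b$ in $N\models T$, then the conditions $\psi(\bar b)^N\le\psi(\bar a)^M$ for all $\sup$-formulae $\psi$ are precisely the approximate finitary requirement that the quantifier-free diagram of $N$ over $\bar b$ be finitely satisfiable over $\bar a$ in $M$; so, exactly as in the chain construction used for \fref{thm:Beth}, $N$ embeds into some $M^{*}\succeq M$ with $\bar b\mapsto\bar a$. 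As $\varphi$ is increasing and invariant under elementary extension, this forces $\varphi(p)+\delta\le\varphi(\bar b)^N\le\varphi(\bar a)^{M^{*}}=\varphi(p)$, which is absurd. Hence $\Delta$ is unsatisfiable, and continuous compactness yields finitely many $\sup$-formulae $\psi_1,\dots,\psi_k$ and a nondecreasing continuous $\alpha$ with $\alpha(0)=0$ such that $T\models\varphi(\bar x)\dotminus(\varphi(p)+\delta)\le\alpha\big(\max_i(\psi_i(\bar x)\dotminus\psi_i(p))\big)$. Setting (and truncating to $[0,1]$ if necessary)
\[ \psi(\bar x)=(\varphi(p)+\delta)+\alpha\big(\max_i(\psi_i(\bar x)\dotminus\psi_i(p))\big), \]
which is a $\sup$-formula because $\max$ and the nondecreasing $\alpha$ commute with $\sup$, we obtain $\psi\in\Psi$ with $\psi(p)=\varphi(p)+\delta$, since the defects $\psi_i(p)\dotminus\psi_i(p)$ vanish. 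As $\delta$ was arbitrary, the desired inequality follows.

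The main obstacle is the satisfiable-case embedding step: converting the inequalities $\psi(\bar b)^N\le\psi(\bar a)^M$ for all $\sup$-formulae into an actual isometric embedding $N\hookrightarrow M^{*}\succeq M$ fixing $\bar b=\bar a$. This is the continuous-logic elementary-diagram argument and requires care with approximate finite satisfiability and with the passage to an elementary extension; it is the exact analogue of the two \emph{Claims} proved inside \fref{thm:Beth}, so the work is already essentially on record. The remaining points—extracting the uniform implication with connective $\alpha$ from unsatisfiability, and checking that the resulting $\psi$ is genuinely a $\sup$-formula—are routine once that embedding criterion is in hand.
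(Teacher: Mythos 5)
Your argument is correct, but it runs in the opposite direction from the paper's and uses a different separation mechanism. The paper approximates the increasing $\varphi$ from \emph{below}: it takes $\Psi$ to be the set of $\sup$-formulae $\psi$ with $T \vdash \psi \leq \varphi$, uses closure under $\vee$ plus compactness to produce a single pair $(M,\bar a)$ at which \emph{every} $\psi \in \Psi$ falls short of $\varphi$ by a fixed $2^{-n}$, and then gets a contradiction by observing that $T \cup \Diag_a(M) \cup \{\varphi(\bar a) \leq r - 2^{-n}\}$ is inconsistent (this is increasingness, verbatim) and extracting from that inconsistency, by compactness, a quantifier-free $\chi$ whose associated $\sup$-formula $\sup_{\bar y}(r' \dotminus 2^m\chi)$ lies in $\Psi$ yet is too large at $\bar a$. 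You approximate from \emph{above} ($\varphi \leq \psi$), prove the pointwise identity $\inf_{\Psi}\psi = \varphi$ on $\tS_n(T)$ via an embedding criterion (domination of all $\sup$-formulae of $\bar b$ by those of $\bar a$ is equivalent to approximate finite satisfiability of the quantifier-free diagram of $N$ over the elementary diagram of $M$, hence to an embedding $N \hookrightarrow M^* \succeq M$ over $\bar b = \bar a$), and then upgrade pointwise to uniform convergence by compactness of the type space together with closure of your $\Psi$ under $\min$. Both routes are sound, and your computations (the $\min$/$\sup$ interchange, the connective $\alpha(t) = \min(1, t/\varepsilon)$, the truncation to $[0,1]$) check out. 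The trade-off: your version is the textbook preservation-theorem pattern and makes the Łoś--Tarski-style embedding lemma explicit — though note that the Claims inside the proof of \fref{thm:Beth} concern \emph{elementary} embeddings in a reduct, so the quantifier-free-diagram variant you need, while proved by the identical consistency computation, is not literally on record and should be written out. The paper's version never constructs an embedding at all (it only needs the \emph{inconsistency} of the diagram-based theory, which is immediate from increasingness) and it produces the uniform $2^{-n}$ rate of approximation in one step, with no covering argument on $\tS_n(T)$.
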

\begin{proof}
  Clearly it suffices to prove the case of increasing definable
  predicates.
  Right to left being immediate, we prove left to right.

  Assume therefore that $\varphi(\bar x)$ is increasing in $T$.
  Let $\Psi$ be the collection of all $\sup$-formulae
  $\psi(\bar x) = \sup_{\bar y} \tilde \psi(\bar x,\bar y)$ such that
  $T \vdash \psi(\bar x) \leq \varphi(\bar x)$.
  Notice that the latter means that
  $T \vdash \tilde \psi(\bar x,\bar y) \leq \varphi(\bar x)$.
  If for every $n < \omega$ there is $\psi_n \in \Psi$ such that
  $T \vdash \varphi(\bar x) \dotminus 2^{-n} \leq \psi(\bar x)$ then
  $\varphi = \flim \psi_n$ and we are done.
  In order to conclude we shall assume the converse and obtain a
  contradiction.

  We assume then that there is $n < \omega$ such that
  $T\cup \{\varphi(\bar x) \dotminus \psi(\bar x) \geq 2^{-n}\}$
  is consistent for all
  $\psi \in \Psi$.
  As $\Psi$ is closed under $\vee$ and
  $\varphi \dotminus (\psi\vee\psi') \geq 2^{-n}
  \Longrightarrow \varphi \dotminus \psi \geq 2^{-n}$,
  the set
  $\Sigma = T\cup \{\varphi\dotminus \psi\geq2^{-n}\}_{\psi \in \Psi}$
  is consistent.
  Let $(M,\bar a)$ be a model for it, and let
  $r = \varphi(\bar a)^M$.

  Let
  $\Sigma'
  = T \cup \Diag_a(M)
  \cup \{\varphi(\bar a) \leq r-2^{-n}\}$.
  Here $\Diag_a(M)$ denotes the atomic diagram of $M$, namely the
  family of all conditions of the form
  $\chi(\bar a) = \chi(\bar a)^M$ where
  $\chi(\bar x)$ is an atomic formula and $\bar a \in M$,
  so a model of $\Diag_a(M)$ is a structure in which $M$ is embedded.
  If $\Sigma'$ were consistent we would get a contradiction to
  $\varphi$ being increasing, so $\Sigma'$ is contradictory.
  By compactness there exists a quantifier-free formula
  $\chi(\bar x,\bar y)$ and $\bar b \in M$ such that
  $\chi(\bar a,\bar b)^M = 0$ and
  $T \cup \{\chi(\bar x,\bar y) = 0\}
  \cup \{\varphi(\bar x) \leq r-2^{-n}\}$
  is contradictory.
  It follows there is some $m$ such that
  $T \cup \{\chi(\bar x,\bar y) \leq 2^{-m}\}
  \cup \{\varphi(\bar x) \leq r-2^{-n}\}$
  is contradictory.
  Let $r' \in (r - 2^{-n},r)$ be a dyadic number, and
  let $\tilde \psi = r' \dotminus 2^m\chi$.
  Then $\tilde \psi$ is a quantifier-free formula, and
  we claim that $T \vdash \tilde \psi(\bar x,\bar y) \leq \varphi(\bar x)$.
  indeed, for any model $N \models T$ and any $\bar c,\bar d \in N$:
  \begin{align*}
    \varphi(\bar c)^N \geq r'
    & \quad \Longrightarrow \quad
    \varphi(\bar c)^N \geq r' \geq \tilde \psi(\bar c,\bar d)^N \\
    \varphi(\bar c)^N \leq r'
    & \quad \Longrightarrow \quad
    \chi(\bar c,\bar d)^N \geq 2^{-m}
    \quad \Longrightarrow \quad
    \varphi(\bar c)^N \geq 0 = \tilde \psi(\bar c,\bar d)^N.
  \end{align*}
  Thus $\psi(\bar x) = \sup_{\bar y} \tilde \psi(\bar x,\bar y) \in \Psi$,
  whereby $\varphi(\bar a)^M \dotminus \psi(\bar a)^M \geq 2^{-n}$.
  But $\chi(\bar a,\bar b)^M = 0$, so
  $\psi(\bar a) \geq r'$ whereby $\varphi(\bar a)^M \geq r'+2^{-n} > r$,
  a contradiction.
  This concludes the proof.
\end{proof}

\begin{cor}
  \label{cor:ModComp}
  A continuous theory $T$ is model complete if and only if every
  formula (definable predicate)
  is equivalent modulo $T$ to an $\inf$-definable predicate.
\end{cor}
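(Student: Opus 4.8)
The plan is to read this off \fref{thm:MonForm} together with the observation that model completeness is exactly the assertion that every definable predicate is \emph{constant} in the sense of \fref{dfn:IncDecFormula}. Recall that a continuous theory $T$ is model complete precisely when every embedding $M \subseteq N$ between models of $T$ is elementary, i.e.\ when $\varphi(\bar a)^M = \varphi(\bar a)^N$ for every definable predicate $\varphi$ and every $\bar a \in M$. Comparing this with \fref{dfn:IncDecFormula}, model completeness says exactly that every definable predicate is simultaneously increasing and decreasing, hence constant.

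For the forward direction I would assume $T$ model complete. Then every embedding of models is elementary, so by the remark above every definable predicate $\varphi$ is constant, and in particular decreasing. By \fref{thm:MonForm} a decreasing definable predicate is equivalent modulo $T$ to an $\inf$-definable predicate, which is the desired conclusion.

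For the converse I would assume every definable predicate is $\inf$-definable and show every definable predicate is constant. Fix a definable predicate $\varphi$. By the immediate (right-to-left) direction of \fref{thm:MonForm}, the $\inf$-definable predicate $\varphi$ is decreasing. The key move is to apply the hypothesis also to the complementary predicate $\neg\varphi = 1 - \varphi$: since the definable predicates are closed under the continuous connective $r \mapsto 1-r$, this $\neg\varphi$ is again a definable predicate, hence $\inf$-definable, hence decreasing; but $\neg\varphi$ decreasing means precisely that $\varphi$ is increasing. Thus $\varphi$ is both increasing and decreasing, i.e.\ constant. As this holds for every definable predicate, any embedding $M \subseteq N$ of models satisfies $\varphi(\bar a)^M = \varphi(\bar a)^N$ for all $\bar a \in M$, so $M \preceq N$, and $T$ is model complete.

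There is essentially no hard calculation here; the real content is already packaged in \fref{thm:MonForm}. The only points requiring a little care are the translation between the semantic formulation of model completeness (elementarity of embeddings) and the monotonicity language of \fref{dfn:IncDecFormula}, and the remark that the class of definable predicates is closed under complementation, so that the ``$\varphi$ and $\neg\varphi$ both decreasing'' trick is legitimate.
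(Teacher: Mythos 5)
Your proposal is correct and follows exactly the paper's own argument: the forward direction is the decreasing case of \fref{thm:MonForm} applied to the observation that model completeness means every definable predicate is constant, and the converse uses the easy direction of \fref{thm:MonForm} together with the $\neg\varphi$ trick to upgrade ``decreasing'' to ``constant''. Nothing to add.
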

\begin{proof}
  Left to right is by \fref{thm:MonForm}.
  For right to left, every formula $\varphi$ is decreasing in $T$, and
  considering $\neg\varphi$ every formula is increasing as well, and therefore
  constant in $T$, which means precisely that $T$ is model complete.
\end{proof}

\subsection{Interpretations}
\label{apx:Interpretations}

We turn to treat the issue of passage from one language to another in
a structure, which has arisen several times in this paper.
We start with a somewhat watered down notion of a structure being
interpretable in another.

\begin{dfn}[Interpretation schemes]
  Let $\cL_0$ and $\cL_1$ be two single sorted signatures.
  A (restricted) \emph{interpretation scheme}
  $\Phi\colon \cL_0 \to \cL_1$
  consists of a mapping assigning to every atomic $\cL_1$-formula
  $\varphi(\bar x)$ an $\cL_0$-definable predicate
  $\varphi^\Phi(\bar x)$.

  Let $M$ be an $\cL_0$-structure.
  We define $\Phi(M)$ to be any $\cL_1$-structure, should one exist,
  equipped with a mapping $\iota\colon M \to \Phi(M)$
  with a dense image, such that for every
  every atomic $\cL_1$-formula $\varphi(\bar x)$:
  \begin{gather}
    \label{eq:IntepretedFormula}
    \varphi(\iota \bar a)^{\Phi(M)} = \varphi^\Phi(\bar a)^M
    \qquad
    \text{for all } \bar a \in M.
  \end{gather}
  It is not difficult to check that the pair
  $\bigl(\Phi(M),\iota\bigr)$, if it exists,
  is unique up to a unique isomorphism, justifying the notation.
  By a convenient abuse of notation we shall omit $\iota$ altogether,
  identifying $\bar a \in M$ with $\iota \bar a \in \Phi(M)$.

  We define $\cK^\Phi$ to be the class of $\cL_0$-structures $M$ for
  which $\Phi(M)$ exists.
  More generally, if $\cK$ is a class of $\cL_1$-structures, we define
  $\Phi^{-1}(\cK) = \{M \in \cK^\Phi\colon \Phi(M)\in\cK\}$.

  By induction on the structure of $\cL_1$-formulae
  on extends the mapping $\varphi \mapsto \varphi^\Phi$
  from atomic formulae to arbitrary ones.
  If $\varphi$ is an $\cL_1$-definable predicate it can always be
  written as $\flim \varphi_n$ where $\varphi_n$ are formulae,
  and we may then define
  $\varphi^\Phi = \flim (\varphi_n)^\Phi$.
  It is straightforward to check that if
  $M \in \cK^\Phi$ then \fref{eq:IntepretedFormula} holds
  for every formula or definable predicate $\varphi$.
\end{dfn}

We qualified this notion of interpretation as ``restricted'', since
it uses the entire home sort of the interpreting structure, whereas
the tradition notion of interpretation in classical logic allows the
interpretation to take place on an arbitrary definable set.
We could extend the definition by letting the domain of the mapping
$\iota$, rather than be all of $M$, be some definable subset
$X \subseteq M^n$, where
$d(\bar x,X)$ is given uniformly by a definable predicate
$\chi^\Phi(\bar x)$ which is also prescribed by $\Phi$.
Everything we prove here regarding interpretations goes through with
this more general definition.
In particular, the class of structures in which $\chi^\Phi$ defines
the distance to a set (the zero set of $\chi^\Phi$)
is elementary.
For details on definable sets in continuous logic and their
properties we refer the reader to
\cite[Section~1]{BenYaacov:DefinabilityOfGroups}.

\begin{lem}
  Let $\Phi\colon \cL_0 \to \cL_1$ be an interpretation scheme.
  Then the class $\cK^\Phi$ is elementary and we may write
  $T^\Phi = \Th(\cK^\Phi)$.
  More generally,
  if $\cK = \Mod(T)$ is a an elementary class of $\cL_1$-structures
  then $\Phi^{-1}(\cK)$ is elementary as well,
  and we may write $\Phi^{-1}(T) = \Th\bigl( \Phi^{-1}(\cK) \bigr)$.
\end{lem}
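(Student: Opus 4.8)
The plan is to read off from the data of $\Phi$ an explicit $\cL_0$-theory whose models are exactly the structures in $\cK^\Phi$, after which the general statement falls out almost for free. Write $d^\Phi$ and $P^\Phi$ (for each predicate symbol $P$) for the $\cL_0$-definable predicates assigned by $\Phi$ to the basic atomic formulae $d(x,y)$ and $P(\bar x)$, and for each function symbol $f$ write $\delta_f(\bar x,y) = [d(f(\bar x),y)]^\Phi$ for the assigned ``graph distance''. An $\cL_0$-structure $M$ lies in $\cK^\Phi$ exactly when these predicates assemble, after passing to the $d^\Phi$-quotient and completing, into a genuine $\cL_1$-structure whose atomic predicates agree with the prescribed $\varphi^\Phi$; first I would record this as a collection $T_0$ of $\cL_0$-conditions.

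These conditions are of four kinds, each of which is visibly a continuous $\cL_0$-sentence equated to $0$, since each is built from the $\varphi^\Phi$ using $\sup$, $\inf$, $\dotminus$ and continuous connectives. First, that $d^\Phi$ is a pseudometric (reflexivity, symmetry and the triangle inequality, written with $\sup_x$, $\sup_{x,y}$ and $\sup_{x,y,z}$). Second, that each $P^\Phi$ is bounded by and uniformly continuous with respect to $d^\Phi$ with the bound and modulus that $\cL_1$ prescribes for $P$. Third, that each $\delta_f$ is the distance to a well-defined, uniformly continuous function value: totality, $\sup_{\bar x}\inf_y \delta_f(\bar x,y) = 0$; the $1$-Lipschitz-in-$y$ and reverse-triangle conditions that force $\delta_f(\bar x,\cdot)$ to be the distance to a single point; and a $\sup/\inf$ condition expressing that this point depends on $\bar x$ with the modulus prescribed for $f$. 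Fourth, consistency: for each composite atomic formula, say $P(f(\bar x))$, the prescribed predicate must agree with the value read off from the basic data, which one writes by matching $[P(f(\bar x))]^\Phi(\bar x)$ against $P^\Phi(y)$ uniformly over all $y$ with $\delta_f(\bar x,y)$ small, using the uniform continuity of $P^\Phi$. The one real obstacle is the third and fourth kinds: the interpreted function symbols take their values in the completion rather than in $M$, so $f(\bar a)$ cannot be named as an element. The point is that totality together with completeness of $\Phi(M)$ supplies the value, while single-valuedness, uniform continuity and consistency are precisely the $\sup/\inf$ conditions just listed. Granting this, $M \models T_0$ lets one build $\Phi(M)$ by quotienting, completing and interpreting the symbols through the now-coherent predicates, while conversely each condition of $T_0$ is plainly necessary for $\Phi(M)$ to exist. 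Hence $\cK^\Phi = \Mod(T_0)$ is elementary and we set $T^\Phi = \Th(\cK^\Phi)$.

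For the general statement, recall that the preceding definition extends $\varphi \mapsto \varphi^\Phi$ to all $\cL_1$-formulae and definable predicates, and that for $M \in \cK^\Phi$ the identity~\eqref{eq:IntepretedFormula} then holds for every such $\varphi$. In particular, for an $\cL_1$-sentence $\sigma$ one has $\sigma^{\Phi(M)} = (\sigma^\Phi)^M$, so $\Phi(M) \models (\sigma = 0)$ if and only if $(\sigma^\Phi)^M = 0$. Writing $\cK = \Mod(T)$ with $T$ a set of conditions $\sigma = 0$, we obtain
\[
  \Phi^{-1}(\cK)
  = \{ M \in \cK^\Phi \colon \Phi(M) \models T \}
  = \Mod\bigl( T_0 \cup \{\, \sigma^\Phi = 0 \colon (\sigma = 0) \in T \,\} \bigr),
\]
an $\cL_0$-theory; thus $\Phi^{-1}(\cK)$ is elementary and we set $\Phi^{-1}(T) = \Th\bigl(\Phi^{-1}(\cK)\bigr)$. (The first assertion is the special case in which $\cK$ is the class of all $\cL_1$-structures.)
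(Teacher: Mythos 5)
Your proposal is correct and follows essentially the same route as the paper: you axiomatise $\cK^\Phi$ explicitly by conditions on the interpreted predicates (uniform continuity moduli, the $\sup/\inf$ conditions making each $[d(f(\bar x),y)]^\Phi$ the graph-distance of a well-defined function into the completion, and consistency of composite atomic formulae), and then obtain $\Phi^{-1}(T)$ by adjoining $\sigma^\Phi$ for each condition of $T$. The only cosmetic difference is that you spell out the pseudometric axioms for $d^\Phi$ and the quotient-and-complete construction of $\Phi(M)$ a little more explicitly than the paper does.
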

\begin{proof}
  In the case where $\cL_1$ is purely relational,
  $T^\Phi$ merely consists of axioms expressing
  that the predicate symbols respect the uniform continuity
  moduli prescribed by $\cL_1$.
  In case there are also function symbols we need more axioms
  (all free variables are quantified universally):
  \begin{itemize}
  \item 
    Axioms expressing that $d(f(\bar x),y)^\Phi$
    respects the uniform continuity
    moduli of $f$ in the $\bar x$ and is $1$-Lipschitz in $y$.
  \item
    The axioms 
    $d(y,z)^\Phi \leq
    d(f(\bar x),y)^\Phi + d(f(\bar x),z)^\Phi$
    and
    $\inf_y\, d(f(\bar x),y)^\Phi = 0$.
    Notice that if
    $d(f(\bar x),y_n)^\Phi \to 0$ as $n \to \infty$ then
    $\{y_n\}$ is a Cauchy sequence and therefore admits
    a limit.
    Thus for all $\bar x$ there exists a unique $y$
    such that
    $d(f(\bar x),y)^\Phi = 0$,
    and for all other $z$:
    $d(f(\bar x),z)^\Phi = d(y,z)^\Phi$.
    We may then legitimately write
    $y = f^\Phi(\bar x)$.
  \item Finally, axioms expressing that other atomic formulae are
    interpreted appropriately.
    For example, for an atomic formula
    $P(f(x,g(y)),z)$ we
    need to say that
    $P^\Phi(f^\Phi(x,g^\Phi(y)),z) = P(f(x,g(y)),z)^\Phi$,
    expressed by
    \begin{gather*}
      \inf_{t,w}
      \bigl(
      d(g(y),w)^\Phi
      \vee
      d(f(x,w),t)^\Phi
      \vee
      |P(t,z)^\Phi - P(f(x,g(y)),z)^\Phi|
      \bigr)
      = 0.
    \end{gather*}
  \end{itemize}
  It is relatively straightforward to check that the collection of
  these axioms does define the class $\cK^\Phi$.

  Assume now that $\cK = \Mod(T)$ is an elementary class of
  $\cL_1$-structures and let
  $\Phi^{-1}(T) = T^\Phi \cup \{ \varphi^\Phi \}_{\varphi \in T}$.
  Then $\Phi^{-1}(\cK) = \Mod(\Phi^{-1}(T))$,
  as desired.
\end{proof}

\begin{dfn}[Composition of interpretation schemes, bi-interpretability]
  Assume now that $\Psi\colon \cL_1 \to \cL_2$ is another
  interpretation scheme.
  We then define an interpretation scheme
  $\Psi \circ \Phi\colon \cL_0 \to \cL_2$
  by $\varphi^{\Psi \circ \Phi} = (\varphi^\Psi)^\Phi$
  for each atomic $\cL_2$-formula $\varphi$.
  Again it is straightforward to check that
  $\Phi^{-1}(\cK^\Psi) \subseteq \cK^{\Psi \circ \Phi}$,
  and that if $M \in \Phi^{-1}(\cK^\Psi)$ then
  $\Psi(\Phi(M)) = \Psi \circ \Phi(M)$.

  Finally, consider interpretation schemes
  $\Phi\colon \cL_0 \to \cL_1$
  and
  $\Psi\colon \cL_1 \to \cL_0$,
  and a class $\cK$ of $\cL_0$-structures.
  Assume that $\cK \subseteq \Phi^{-1}(\cK^\Psi)$
  and that
  $\Psi \circ \Phi(M) = M$ (with $\iota = \id_M$)
  for all $M \in \cK$.
  We then say that $\cK$ and
  $\cK' = \Phi(\cK) = \{\Phi(M)\}_{M\in \cK}$
  are (strongly) \emph{bi-interpretable} by $(\Phi,\Psi)$.
  Notice that this is a symmetric notion,
  namely that in this case
  $\cK' \subseteq \Psi^{-1}(\cK^\Phi)$,
  $\cK = \Psi(\cK')$ and
  $\Phi \circ \Psi(N) = N$ for all $N \in \cK'$.
\end{dfn}
Again, our notion of bi-interpretability is stronger than strictly
necessary, and for many applications it suffices to assume that
the mapping $\iota\colon M \to \Psi \circ \Phi(M)$ is uniformly
definable.

\begin{thm}
  \label{thm:BiInterpretableClasses}
  Let $\Phi\colon \cL_0 \to \cL_1$
  and $\Psi\colon \cL_1 \to \cL_0$ be two interpretation schemes,
  and let $\cK$ and $\cK'$ be classes of $\cL_0$- and of
  $\cL_1$-structures, respectively.
  Assume moreover that $\cK$ and $\cK'$ are bi-interpretable
  via $(\Phi,\Psi)$.
  \begin{enumerate}
  \item The class $\cK$ is elementary if and only if $\cK'$ is.
  \item Assume that for each atomic $\cL_1$-formula $\varphi$,
    the definable predicate $\varphi^\Phi$ is constant in $\cK$, and
    similarly that $\varphi^\Psi$ is constant in $\cK'$ for every
    atomic $\cL_0$-formula $\varphi$.
    Then $\cK$ is model complete (respectively, inductive) if and only
    if $\cK'$ is.
  \item Assume that for each atomic $\cL_1$-formula $\varphi$,
    the definable predicate $\varphi^\Phi$ is quantifier-free, and
    similarly that $\varphi^\Psi$ is quantifier-free for every
    atomic $\cL_0$-formula $\varphi$
    (we say that $\Phi$ and $\Psi$ are quantifier-free,
    or that $\cK$ and $\cK'$ are quantifier-free bi-interpretable).
    Then $\cK$ eliminates quantifiers if and only
    if $\cK'$ does.
  \end{enumerate}
\end{thm}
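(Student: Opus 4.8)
The plan is to exploit that all three parts run on the same mechanism: the bi-interpretability identities $\Psi\circ\Phi(M)=M$ (for $M\in\cK$) and $\Phi\circ\Psi(N)=N$ (for $N\in\cK'$) let one translate a formula, an embedding, or a chain across the two languages and back, so in each item it suffices, by the symmetry of the hypothesis in $(\Phi,\cK)$ and $(\Psi,\cK')$, to prove a single implication.

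For (i), assume $\cK'=\Mod(T')$ is elementary. By the Lemma establishing that $\Phi^{-1}(\cK)$ is elementary whenever $\cK$ is, $\Phi^{-1}(\cK')=\Mod(\Phi^{-1}(T'))$ is elementary, and $\cK\subseteq\Phi^{-1}(\cK')$ because $\varphi^\Phi$ evaluated in $M$ agrees with $\varphi$ evaluated in $\Phi(M)\in\cK'$. Since $\Phi$ may lose information this inclusion can be strict, so I would cut it down by the \emph{retraction axioms}
\[
  R=\Bigl\{\,\sup_{\bar x}\bigl|\chi(\bar x)-(\chi^{\Psi})^{\Phi}(\bar x)\bigr|=0 \;:\; \chi \text{ an atomic } \cL_0\text{-formula}\,\Bigr\},
\]
each $(\chi^{\Psi})^{\Phi}=\chi^{\Psi\circ\Phi}$ being a genuine $\cL_0$-definable predicate. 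Every $M\in\cK$ satisfies $R$ since $\Psi\circ\Phi(M)=M$. Conversely, if $M\models\Phi^{-1}(T')\cup R$ then $\Phi(M)\in\cK'$, the composition interpretation gives $\chi(\bar a)^M=\chi(\bar a)^{\Psi\Phi(M)}$ for all atomic $\chi$, so the canonical dense map $M\to\Psi\Phi(M)$ preserves all atomic formulae and is therefore an isomorphism; hence $M=\Psi(\Phi(M))\in\Psi(\cK')=\cK$. Thus $\cK=\Mod(\Phi^{-1}(T')\cup R)$ is elementary, and symmetry gives the converse. By (i) I may assume throughout (ii) and (iii) that both classes are elementary.

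For (ii) I would argue semantically rather than syntactically, precisely because $\Phi$ need not be quantifier-free here; the constancy hypothesis (equivalently, by \fref{thm:MonForm}, simultaneous $\sup$- and $\inf$-definability) is exactly what forces $\Phi$ to send a $\cK$-embedding $M\subseteq N$ to an $\cL_1$-embedding $\Phi(M)\subseteq\Phi(N)$, since $\varphi(\bar a)^{\Phi(M)}=\varphi^\Phi(\bar a)^M=\varphi^\Phi(\bar a)^N=\varphi(\bar a)^{\Phi(N)}$ for atomic $\varphi$ and density extends the identification. Using that $T$ is model complete iff every embedding between its models is elementary (equivalently \fref{cor:ModComp}), if $\cK'$ is model complete then $M\subseteq N$ in $\cK$ yields $\Phi(M)\preceq\Phi(N)$; reading $\chi(\bar a)^M=\chi^{\Psi}(\bar a)^{\Phi(M)}$ (via $\Psi\Phi(M)=M$) and using that the $\cL_1$-definable predicate $\chi^{\Psi}$ is preserved by $\Phi(M)\preceq\Phi(N)$ gives $\chi(\bar a)^M=\chi(\bar a)^N$, i.e. $M\preceq N$. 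For inductiveness the same translation sends an increasing chain $\{M_i\}$ in $\cK$ to one in $\cK'$ with $\Phi\bigl(\bigcup_iM_i\bigr)=\bigcup_i\Phi(M_i)$ by density; if $\cK'$ is inductive this union lies in $\cK'$, and since the axioms $R$ are of $\sup_{\bar x}$-form they are inherited by the completed union, so $\bigcup_iM_i\models\Phi^{-1}(T')\cup R$ and hence lies in $\cK$ by the axiomatisation found in (i). Symmetry finishes both cases.

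For (iii) the quantifier-free hypothesis lets me run the translation purely syntactically. Given an $\cL_0$-formula $\chi$, write $\chi(\bar a)^M=\chi^{\Psi}(\bar a)^{\Phi(M)}$ for $M\in\cK$; if $\cK'$ eliminates quantifiers then $\chi^{\Psi}$ equals, modulo $\Th(\cK')$, a quantifier-free $\cL_1$-definable predicate $\xi$, and applying $\Phi$ together with $\xi(\bar a)^{\Phi(M)}=\xi^{\Phi}(\bar a)^M$ gives $\chi=\xi^{\Phi}$ on $\cK$. Because $\Phi$ is quantifier-free, $\xi^{\Phi}$ is built from $\xi$ by replacing atomic $\cL_1$-formulae with quantifier-free $\cL_0$-definable predicates under the same connectives and forced limits, hence is itself quantifier-free, so $\cK$ eliminates quantifiers; symmetry gives the converse. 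I expect the only real subtlety to lie in (i): checking that $R$ together with $\Phi^{-1}(T')$ cuts $\Phi^{-1}(\cK')$ down to exactly $\cK$, rather than to the a priori larger class of structures that merely map into $\cK'$ under $\Phi$.
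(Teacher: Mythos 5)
Your proposal follows essentially the same route as the paper's proof: in each item you translate formulae, embeddings and chains across the interpretation schemes using the retraction identities $\Psi\circ\Phi(M)=M$ and $\Phi\circ\Psi(N)=N$; your ``retraction axioms'' $R$ are precisely the paper's axioms $\varphi(\bar x)=\varphi^{\Phi\circ\Psi}(\bar x)$ with the roles of $\Phi$ and $\Psi$ exchanged, and your arguments for (ii) model completeness and for (iii) are the paper's arguments mirrored.

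The one place where your write-up has a real wrinkle is the inductiveness clause of (ii). You assert $\Phi\bigl(\bigcup_i M_i\bigr)=\bigcup_i\Phi(M_i)$ and then deduce $\bigcup_i M_i\models\Phi^{-1}(T')\cup R$, but the left-hand side presupposes that $\bigcup_i M_i$ lies in $\cK^\Phi$, i.e.\ that $\Phi$ of the union is defined at all --- and since membership of the union in $\cK$ (a subclass of $\cK^\Phi$) is exactly what you are trying to establish, the step as written is circular. Two repairs are available. Either observe that the theory $T^\Phi$ axiomatising $\cK^\Phi$ is $\forall\exists$-axiomatised and hence inductive, so that $\Phi\bigl(\bigcup_i M_i\bigr)$ does exist and your argument goes through. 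Or, as the paper does, apply the \emph{other} interpretation scheme to the union formed inside the inductive class, where existence is automatic: set $N=\bigcup_i\Phi(M_i)\in\cK'$, note that $\Psi(N)\in\Psi(\cK')=\cK$ exists since $N\in\cK'\subseteq\cK^\Psi$, check that $\Psi(N)\supseteq\Psi(\Phi(M_i))=M_i$ for each $i$ and that the set union of the $M_i$ is dense in $\Psi(N)$ (using uniform continuity of the $\cL_1$-definable predicate $d_{\cL_0}(x,y)^\Psi$ and density of $\bigcup_i\Phi(M_i)$ in $N$), whence $\bigcup_i M_i=\Psi(N)\in\cK$. Everything else in your argument is sound.
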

\begin{proof}
  Assume that $\cK = \Mod(T)$.
  Let $T'$ be the theory consisting of
  $\Psi^{-1}(T)$ along with all the axioms of the form
  $\varphi(\bar x) = \varphi^{\Phi \circ \Psi}(\bar x)$,
  where $\varphi$ varies over atomic $\cL_1$-formulae.
  Clearly, if $N \in \cK'$ then $N \models T'$.
  Conversely, assume that $N \models T'$.
  Then $N \models \Psi^{-1}(T)$,
  so $\Psi(N) \in \cK$.
  Thus $\Phi \circ \Psi(N) \in \cK'$, and the second group of axioms
  ensures that $N = \Phi \circ \Psi(N)$.
  Thus $\cK' = \Mod(T')$ is elementary, proving the first item
  (by symmetry).

  For the second item, the assumption tells us that if
  $M_0 \subseteq M_1$ are both in $\cK$ then
  $\Phi(M_0) \subseteq \Phi(M_1)$ in $\cK'$, and similarly in the
  direction from $\cK'$ to $\cK$.
  So assume first that $\cK$ is model complete and let
  $N_0 \subseteq N_1$ in $\cK'$.
  Then $\Psi(N_0) \subseteq \Psi(N_1)$ in $\cK$,
  so $\Psi(N_0) \preceq \Psi(N_1)$
  and thus
  $N_0 = \Phi \circ \Psi(N_0) \preceq \Phi \circ \Psi(N_1) = N_1$.
  Assume now that $\cK$ is inductive and let
  $\{N_i\}_{i<\alpha}$ be an increasing chain in $\cK'$.
  Then
  $\{\Psi(N_i)\}_{i<\alpha}$ is an increasing chain in $\cK$,
  so $M = \bigcup \Psi(N_i) \in \cK$
  (this being a union of complete structures, i.e., the metric
  completion of the set union).
  In particular, $M \supseteq \Psi(N_i)$ for each $i$, so
  $\Phi(M) \supseteq \Phi \circ \Psi(N_i) = N_i$,
  i.e.,
  $\Phi(M) \supseteq \bigcup N_i$.
  We now use the fact that
  the $\cL_0$-definable predicate $d_{\cL_1}(x,y)^\Phi$
  is necessarily uniformly continuous, and that the
  set union of $\Psi(N_i)$ is dense in $M$
  (both with respect to $d_{\cL_0}$)
  to conclude that the set union of the $N_i$ is dense in
  $\Phi(M)$.
  Considering the complete structure union we have
  $\bigcup N_i = \Phi(M) \in \cK'$, as desired.

  We now turn to the last item.
  The assumption tells us that if $\varphi$ is any quantifier-free
  $\cL_0$-formula, or even a quantifier-free $\cL_0$-definable
  predicate, then $\varphi^\Psi$ is quantifier-free as well, and
  similarly in the other direction.
  Assume $\cK$ eliminates quantifiers, and let
  $\varphi(\bar x)$ be and $\cL_1$-formula.
  Then $\varphi^\Phi$ is equivalent in $\cK$ to a quantifier-free
  definable predicate, say $\psi(\bar x)$, and $\psi^\Psi(\bar x)$ is
  quantifier-free as well.
  It will be enough to show that $\psi^\Psi$ coincides with $\varphi$
  in $\cK'$.
  Indeed, let $N = \Phi \circ \Psi(N) \in \cK'$, $\bar a \in N$.
  Then
  \begin{gather*}
    \varphi(\bar a)^N
    = \varphi(\bar a)^{\Phi \circ \Psi(N)}
    = \varphi^\Phi(\bar a)^{\Psi(N)}
    = \psi(\bar a)^{\Psi(N)}
    = \psi^\Psi(\bar a)^N.
  \end{gather*}
  This completes the proof.
\end{proof}

\section{A convergence rate for approximations of the modular
  functional}
\label{apx:ModularApprox}

We conclude with a result that was used in earlier versions of this
paper in \fref{sec:DefinableModular}, later superseded by a more direct
approach.
We chose to keep it here since it is relatively easy
and does provide some uniformity for approximations of Nakano spaces by
ones in which the essential range of $p$ is finite.
Such uniformity may come in handy for an explicit axiomatisation
of Nakano spaces, which, at the time of writing, does not yet exist in
the literature.

A naïve manner to try to approximate the
modular functional is by $\Theta(f) \approx \sum \|f_k\|^{p_k}$
where $f = \sum f_k$ consists of cutting the domain
of $f$ into chunks such that the exponent function $p(\cdot)$ is almost
constant $p_k$ on each chunk.
We show here that these approximations do converge to $\Theta(f)$ at a
uniform rate: the difference is always smaller than
$C\sqrt{\Delta}$ where $\Delta$ is the maximum of diameters of the range of $p$
on the chunks and $C$ is a constant.

\begin{lem}
  \label{lem:LocalModularApproxmation}
  Let $(N,\Theta) = L_{p(\cdot)}(X,\fB,\mu)$, and assume that
  $\essrng p \subseteq [s,s+\varepsilon]$ where
  $1 \leq s < s+\varepsilon \leq r$.
  Let $f \in N$, and assume that $\|f\| \leq 1$.
  Then
  $|\Theta(f)-\|f\|^{s+\varepsilon}|
  \leq \frac{\varepsilon}{s}|\ln \Theta(f)|\Theta(f)$.
\end{lem}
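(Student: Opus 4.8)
The plan is to reduce everything to the normalisation identity defining the norm together with a single elementary inequality. Write $c = \|f\|$; since $\|f\| \le 1$ we have $0 \le c \le 1$, and we may assume $f \neq 0$, so that $\Theta(f) > 0$ and $c > 0$ (the case $f = 0$ is trivial, both sides vanishing once the right-hand side is read as its limit $0$). Recall that by definition of the norm $\int (|f|/c)^{p(x)}\,d\mu = 1$. It is convenient to set $u = -\ln c \ge 0$, so that $c^{-p} = e^{pu}$ and $c^{s+\varepsilon} = e^{-(s+\varepsilon)u}$, and to introduce the probability measure $d\rho = |f|^{p(x)}\,d\mu / \Theta(f)$ on $X$ (a genuine probability measure, since $\int |f|^{p}\,d\mu = \Theta(f)$). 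In these terms the normalisation identity becomes
\[
  \Theta(f) \int e^{p(x)u}\,d\rho = 1.
\]

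First I would extract two consequences of this identity, using $s \le p(x) \le s+\varepsilon$ and $u \ge 0$. Since $e^{su} \le \int e^{p(x)u}\,d\rho \le e^{(s+\varepsilon)u}$, the identity yields $e^{-(s+\varepsilon)u} \le \Theta(f) \le e^{-su}$. The left inequality says $c^{s+\varepsilon} \le \Theta(f)$, so the quantity inside the absolute value is nonnegative and it suffices to bound $\Theta(f) - c^{s+\varepsilon}$ from above; the right inequality gives, on taking logarithms, $su \le -\ln\Theta(f) = |\ln\Theta(f)|$, that is $u \le |\ln\Theta(f)|/s$.

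The core computation rewrites the difference through $\rho$. Using $1/\Theta(f) = \int e^{p(x)u}\,d\rho$,
\[
  \Theta(f) - c^{s+\varepsilon}
  = \Theta(f)\left(1 - e^{-(s+\varepsilon)u}\int e^{p(x)u}\,d\rho\right)
  = \Theta(f)\int \left(1 - e^{-(s+\varepsilon - p(x))u}\right) d\rho.
\]
Since $0 \le (s+\varepsilon - p(x))u \le \varepsilon u$ and $1 - e^{-a} \le a$ for $a \ge 0$, the integrand is bounded pointwise by $\varepsilon u$, whence
\[
  \Theta(f) - c^{s+\varepsilon} \le \Theta(f)\,\varepsilon u \le \frac{\varepsilon}{s}\,|\ln\Theta(f)|\,\Theta(f),
\]
invoking the bound $u \le |\ln\Theta(f)|/s$ from the previous step. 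This is exactly the claimed inequality.

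I do not expect a serious obstacle. Once the substitution $d\rho = |f|^{p}\,d\mu/\Theta(f)$ is made, the statement collapses to the elementary estimate $1 - e^{-a} \le a$ together with the monotonicity bounds $e^{-(s+\varepsilon)u} \le \Theta(f) \le e^{-su}$, and in particular the sign of $\Theta(f) - \|f\|^{s+\varepsilon}$ is pinned down for free. The only point requiring a little care is the degenerate case $\Theta(f) = 0$ (equivalently $f = 0$ a.e.), where the right-hand side is interpreted as the limit $0$; I would dispose of it at the outset.
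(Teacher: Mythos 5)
Your argument is correct and follows essentially the same route as the paper: both proofs extract the two-sided bound $\|f\|^{s+\varepsilon}\leq\Theta(f)\leq\|f\|^{s}$ from the normalisation identity $\Theta(f/\|f\|)=1$ together with the monotonicity of $a^{t-p(x)}$ for $a\leq 1$, and then conclude with the elementary inequality $1-e^{-x}\leq x$. Your reformulation via the probability measure $d\rho=|f|^{p}\,d\mu/\Theta(f)$ and the pointwise bound on the integrand is a cosmetic variant of the paper's purely algebraic finish $\Theta(f)-\Theta(f)^{1+\varepsilon/s}\leq\frac{\varepsilon}{s}|\ln\Theta(f)|\,\Theta(f)$.
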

\begin{proof}
  We may assume that $f \geq 0$ and $\|f\| > 0$.
  Let $a = \|f\|$, so $\Theta(f/a) = 1$, and for all $t$:
  \begin{gather*}
    a^t
    = a^t\left(\int (f/a)^p d\mu\right)
    = \int f^p a^{t-p} d\mu,
  \end{gather*}
  Notice that for all $x$ we have
  $s - p(x) \leq 0 \Longrightarrow a^{s-p(x)} \geq 1$
  while
  $s+\varepsilon - p(x)
  \geq 0 \Longrightarrow a^{s+\varepsilon-p(x)} \leq 1$,
  so:
  \begin{gather*}
    a^{s+\varepsilon}
    = \int f^p a^{s+\varepsilon-p} d\mu
    \leq \int f^p d\mu
    \leq \int f^p a^{s-p} d\mu
    = a^s.
  \end{gather*}
  In other words: $a^{s+\varepsilon} \leq \Theta(f) \leq a^s$.
  It follows that
  $\Theta(f)^{1+\frac{\varepsilon}{s}}
  \leq a^{s+\varepsilon}
  \leq \Theta(f)$,
  whereby
  \begin{gather*}
    |\Theta(f) - a^{s+\varepsilon}|
    \leq |\Theta(f) - \Theta(f)^{1+\frac{\varepsilon}{s}}|
    \leq \frac{\varepsilon}{s}|\ln \Theta(f)|\Theta(f),
  \end{gather*}
  as desired.
\end{proof}

\begin{lem}
  \label{lem:SumUpperBound}
  There is a constant $C$ such that for every $0 < n \in \bN$ and
  every sequence $(a_k\colon k < \omega)$ such that $a_k \geq 0$ and
  $\sum a_k \leq 1$:
  \begin{align*}
    &\sum \frac{a_k |\ln a_k|}{k+n}  \leq \frac{C}{\sqrt{n}},
    && (0 \ln 0 = 0).
  \end{align*}
\end{lem}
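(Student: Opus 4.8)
The plan is to reduce the estimate to a single pointwise inequality and then apply Cauchy--Schwarz, exploiting the constraint $\sum_k a_k \le 1$. The crude bound $\frac{1}{k+n} \le \frac{1}{n}$, which would give $\sum_k \frac{a_k|\ln a_k|}{k+n} \le \frac{1}{n}\sum_k a_k|\ln a_k|$, is useless by itself: one can spread the unit mass thinly over infinitely many indices so that $\sum_k a_k|\ln a_k|$ diverges. Hence the decay of the weight $\frac{1}{k+n}$ in $k$ must be brought into play, and the natural way to do so is to pass from $L^1$ information ($\sum a_k \le 1$) to $\ell^2$ information via an auxiliary power of $a_k$.

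The key ingredient is the elementary inequality
\begin{gather*}
  x\,|\ln x| \le \frac{2}{e}\sqrt{x}
  \qquad \text{for all } x \in [0,1],
\end{gather*}
with the convention $0\ln 0 = 0$. I would prove this by maximising $h(x) = \sqrt{x}\,|\ln x| = -\sqrt{x}\,\ln x$ on $(0,1]$: one computes $h'(x) = \frac{-\ln x - 2}{2\sqrt{x}}$, so the unique critical point is $x = e^{-2}$, where $h(e^{-2}) = 2/e$, and $h(1)=0$, $h(0^+)=0$.

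Applying this with $x = a_k$ term by term gives
\begin{gather*}
  \sum_k \frac{a_k|\ln a_k|}{k+n}
  \le \frac{2}{e} \sum_k \frac{\sqrt{a_k}}{k+n}.
\end{gather*}
Now Cauchy--Schwarz, pairing $\sqrt{a_k}$ against $\frac{1}{k+n}$, together with $\sum_k a_k \le 1$, yields
\begin{gather*}
  \sum_k \frac{\sqrt{a_k}}{k+n}
  \le \Bigl( \sum_k a_k \Bigr)^{1/2}
      \Bigl( \sum_k \frac{1}{(k+n)^2} \Bigr)^{1/2}
  \le \Bigl( \sum_{m \ge n} \frac{1}{m^2} \Bigr)^{1/2}.
\end{gather*}
Comparing the tail with an integral gives $\sum_{m\ge n}\frac{1}{m^2} \le \frac{1}{n^2} + \int_n^\infty \frac{dx}{x^2} \le \frac{2}{n}$, so the whole expression is bounded by $\frac{2}{e}\sqrt{2/n}$, and one may take $C = 2\sqrt{2}/e$.

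The only genuine obstacle is recognising that one must trade the logarithm for a fractional power of $a_k$ before summing, rather than bounding $\frac{1}{k+n}$ crudely by $\frac{1}{n}$. Once this is done the rest is routine. The exponent $\tfrac12$ is a convenient choice: it sends the weight $\frac{1}{k+n}$ to the square-summable $\frac{1}{(k+n)^2}$, whose sum is of order $n^{-1}$ and whose square root is exactly the target rate $n^{-1/2}$. (Any exponent $\rho \in (\tfrac12,1)$ would, via Hölder, produce an even smaller bound of order $n^{-\rho}$, but the stated rate is all that is needed and the value $\tfrac12$ makes the argument cleanest.)
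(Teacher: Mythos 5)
Your proof is correct, and it takes a genuinely different route from the paper's. The paper first reduces to the case $a_k \le 1/e$, reorders the sequence so that $a_k|\ln a_k|$ is decreasing (legitimate for an upper bound by the rearrangement inequality, since the weights $\frac{1}{k+n}$ decrease), deduces $a_k \le \frac{1}{k+1}$ and hence $a_k|\ln a_k| \le \frac{\ln(k+1)}{k+1}$ by monotonicity of $x \mapsto -x\ln x$ on $[0,\tfrac 1e]$, and then splits $\frac{1}{(k+n)(k+1)} \le \frac{1}{\sqrt n\,(k+1)^{3/2}}$, finally patching in the finitely many exceptional terms with $a_k > 1/e$ by hand. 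You instead absorb the logarithm into a fractional power via the pointwise bound $x|\ln x| \le \frac{2}{e}\sqrt{x}$ on $[0,1]$ (correctly verified by maximising $-\sqrt{x}\ln x$ at $x = e^{-2}$) and then apply Cauchy--Schwarz against $\sum_{m\ge n} m^{-2} \le \frac{2}{n}$. Your argument is shorter, needs no rearrangement and no case analysis, and yields the explicit constant $C = 2\sqrt{2}/e$, whereas the paper's constant involves the series $\sum \frac{\ln(k+3)}{(k+3)^{3/2}}$. All steps in your write-up check out, including the observation that the crude bound $\frac{1}{k+n}\le\frac1n$ alone cannot work.
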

\begin{proof}
  At first let us assume that $a_k \leq \frac{1}{e}$ for all $k$,
  noting that $\theta(x) = -x\ln x$ is strictly increasing on
  $[0,\frac{1}{e}]$.

  We may assume that the sequence is ordered so that
  $a_k|\ln a_k|$ is decreasing.
  It follows that $(a_k\colon k < \omega)$ is a decreasing sequence.
  Since $\sum a_k \leq 1$ we have
  $a_k \leq \frac{1}{k+1} < \frac{1}{e}$ for all $k \geq 2$, whereby
  $a_k|\ln a_k| \leq  \frac{\ln(k+1)}{k+1} $.
  Let $C_0 = \sum \frac{\ln (k+3)}{(k+3)^{3/2}} < \infty$.
  Then:
  \begin{align*}
    \sum_{k\geq 2} \frac{a_k |\ln a_k|}{k+n}
    & \leq \sum_{k\geq 2} \frac{\ln (k+1)}{(k+n)(k+1)}
    \leq \frac{1}{\sqrt n} \sum_{k\geq 2} \frac{\ln (k+1)}{(k+1)^{3/2}}
    = \frac{C_0}{\sqrt{n}}.
  \end{align*}

  In this calculation we ignored the first two terms of the sum.
  In addition, in the general case
  there may be at most $2$ indexes $k$ such that
  $a_k > \frac{1}{e}$.
  Together these account for at most
  $\frac{4}{en} \leq \frac{4}{e\sqrt n}$.
  Thus
  $\sum \frac{a_k |\ln a_k|}{k+n}  \leq \frac{C}{\sqrt{n}}$ where
  $C = C_0+\frac{4}{e}$.
\end{proof}

\begin{lem}
  \label{lem:LocalModularApproximation}
  Let $(N,\Theta) = L_{p(\cdot)}(X,\fB,\mu)$
  be a Nakano space and let
  $0 <n < \omega$ be fixed.
  Let $\ell > n(r-1)$,
  and for $k < \ell$ let
  $K_k = [\frac{n+k}{n},\frac{n+k+1}{n})$, $X_k = p^{-1}(K_k)$.
  Let $C$ be the constant from \fref{lem:SumUpperBound}.

  Then every $f \in N$ can be expressed as $f = \sum_{k<\ell} f_k$ where
  $f_k  = f\rest_{X_k}
  \in L_{p\rest_{X_k}(\cdot)}(X_k,\fB\rest_{X_k},\mu\rest_{X_k})$.
  If $\|f\| \leq 1$ then we have:
  \begin{gather*}
    \left| \Theta_{p(\cdot)}(f) - \sum_{k<\ell} \|f_k\|^{\frac{n+k+1}{n}} \right|
    \leq \frac{C}{\sqrt n}.
  \end{gather*}
\end{lem}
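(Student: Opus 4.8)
The plan is to reduce the global estimate to the pointwise bound of \fref{lem:LocalModularApproxmation} applied band by band, and then to control the resulting sum by \fref{lem:SumUpperBound}. First I would record the decomposition. Since $\ell > n(r-1)$, the half-open intervals $K_k = [\frac{n+k}{n},\frac{n+k+1}{n})$ for $k < \ell$ cover $[1,r]$, so $\{X_k\}_{k<\ell}$ is a partition of $X$ up to null sets and $f = \sum_{k<\ell} f_k$ with $f_k = f\rest_{X_k}$. Writing $\Theta_k$ for the modular functional of the band $L_{p\rest_{X_k}(\cdot)}(X_k,\fB\rest_{X_k},\mu\rest_{X_k})$ and setting $a_k = \Theta_k(f_k) = \int_{X_k} |f|^{p}\,d\mu$, additivity of the integral gives $\Theta(f) = \sum_{k<\ell} a_k$.

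Two normalisation facts are needed, both following from lattice monotonicity of $\Theta$ together with $\|f\| \le 1$: first $\|f_k\| \le \|f\| \le 1$ (since $|f_k| \le |f|$, and the norm of a function supported on $X_k$ agrees whether computed in $N$ or in its band), and second $\Theta(f) \le 1$ (as $\|f\| \le 1$ forces $|f| \le |f|/\|f\|$ pointwise, whence $\Theta(f) \le \Theta(f/\|f\|) = 1$); in particular $\sum_k a_k \le 1$. Next I would apply the pointwise estimate band by band. By construction $\essrng(p\rest_{X_k}) \subseteq [\frac{n+k}{n},\frac{n+k+1}{n}]$, so \fref{lem:LocalModularApproxmation} with $s = \frac{n+k}{n}$ and $\varepsilon = \frac1n$ (for which $\frac{\varepsilon}{s} = \frac{1}{n+k}$) yields $\bigl|a_k - \|f_k\|^{(n+k+1)/n}\bigr| \le \frac{1}{n+k}|\ln a_k|\,a_k$. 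The one point deserving a remark is that for the top nonempty band $\frac{n+k+1}{n}$ may slightly exceed $r$; this is immaterial, since the proof of \fref{lem:LocalModularApproxmation} uses only the containment of the essential range and $\|f_k\| \le 1$, not the formal bound $s+\varepsilon \le r$ appearing in its hypotheses.

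Finally I would sum and invoke \fref{lem:SumUpperBound}. By the triangle inequality,
\begin{gather*}
  \Bigl| \Theta(f) - \sum_{k<\ell} \|f_k\|^{(n+k+1)/n} \Bigr|
  \le \sum_{k<\ell} \frac{a_k |\ln a_k|}{n+k}.
\end{gather*}
Extending $(a_k)$ by zeros to an infinite sequence (with $0\ln0 = 0$) we have $a_k \ge 0$ and $\sum_k a_k \le 1$, so \fref{lem:SumUpperBound} bounds the right-hand side by $\frac{C}{\sqrt n}$, which is exactly the assertion. There is no genuine obstacle left: all the analytic difficulty has been front-loaded into the two cited lemmas, and the present argument is the bookkeeping gluing the per-band estimate to the summation bound. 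The only place to stay alert is the simultaneous use of $\|f\| \le 1$, $\|f_k\| \le 1$ and $\sum_k a_k \le 1$, each of which must be verified to license the two lemmas at once.
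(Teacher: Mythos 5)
Your proposal is correct and follows exactly the paper's own argument: split $\Theta(f)=\sum_k\Theta(f_k)$, apply \fref{lem:LocalModularApproxmation} on each band with $s=\frac{n+k}{n}$, $\varepsilon=\frac1n$, and conclude with \fref{lem:SumUpperBound} using $\sum_k\Theta(f_k)\leq 1$. The extra bookkeeping you supply (why $\|f_k\|\leq 1$ and $\Theta(f)\leq 1$, and the harmlessness of the top band's endpoint exceeding $r$) is sound and merely makes explicit what the paper leaves implicit.
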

\begin{proof}
  We have
  $\sum_{k < \ell} \Theta(f_k) = \Theta(f) \leq 1$, whereby
  \begin{align*}
    \left| \Theta(f) - \sum_{k<\ell} \|f_k\|^{\frac{n+k+1}{n}} \right|
    & \leq \sum_{k<\ell} \left| \Theta(f_k) - \|f_k\|^{\frac{n+k+1}{n}} \right| \\
    & \leq \sum_{k<\ell} \frac{1/n}{(n+k)/n} |\ln(\Theta(f_k))|\Theta(f_k) \\
    & = \sum_{k<\ell} \frac{1}{n+k} |\ln(\Theta(f_k))|\Theta(f_k)
    \leq \frac{C}{\sqrt{n}},
  \end{align*}
  as desired.
\end{proof}

\providecommand{\bysame}{\leavevmode\hbox to3em{\hrulefill}\thinspace}
\providecommand{\MR}{\relax\ifhmode\unskip\space\fi MR }
\providecommand{\MRhref}[2]{%
  \href{http://www.ams.org/mathscinet-getitem?mr=#1}{#2}
}
\providecommand{\href}[2]{#2}


\begin{thebibliography}{BBHU08}

\bibitem[BBH]{BenYaacov-Berenstein-Henson:LpBanachLattices}
Itaï {Ben Yaacov}, Alexander Berenstein, and C.~Ward Henson,
  \emph{Model-theoretic independence in the {B}anach lattices {$L^p(\mu)$}},
  submitted.

\bibitem[BBHU08]{BenYaacov-Berenstein-Henson-Usvyatsov:NewtonMS}
Itaï {Ben Yaacov}, Alexander Berenstein, C.~Ward Henson, and Alexander
  Usvyatsov, \emph{Model theory for metric structures}, Model theory with
  Applications to Algebra and Analysis, volume 2 (Zoé Chatzidakis, Dugald
  Macpherson, Anand Pillay, and Alex Wilkie, eds.), London Math Society Lecture
  Note Series, vol. 350, 2008, pp.~315--427.

\bibitem[{Ben}a]{BenYaacov:DefinabilityOfGroups}
Itaï {Ben Yaacov}, \emph{Definability of groups in $\aleph_0$-stable metric
  structures}, submitted \arXiv{0802.4286}.

\bibitem[{Ben}b]{BenYaacov:Perturbations}
\bysame, \emph{On perturbations of continuous structures}, submitted
  \arXiv{0802.4388}.

\bibitem[{Ben}08]{BenYaacov:TopometricSpacesAndPerturbations}
\bysame, \emph{Topometric spaces and perturbations of metric structures}, Logic
  and Analysis \textbf{1} (2008), no.~3--4, 235--272, \arXiv{0802.4458}.

\bibitem[BU]{BenYaacov-Usvyatsov:CFO}
Itaï {Ben Yaacov} and Alexander Usvyatsov, \emph{Continuous first order logic
  and local stability}, Transactions of the American Mathematical Society, to
  appear \arXiv{0801.4303}.

\bibitem[Fre03]{Fremlin:MeasureTheoryVol2}
D.~H. Fremlin,
  \emph{\href{http://www.essex.ac.uk/maths/staff/fremlin/mt2.2003/index.htm}{M%
easure Theory Volume 2: Further Topics in the General Theory}}, Torres Fremlin,
  25 Ireton Road, Colchester CO3 3AT, England, 2003.

\bibitem[Fre04]{Fremlin:MeasureTheoryVol3}
\bysame,
  \emph{\href{http://www.essex.ac.uk/maths/staff/fremlin/mt3.2004/index.htm}{M%
easure Theory Volume 3: Measure Algebras}}, Torres Fremlin, 25 Ireton Road,
  Colchester CO3 3AT, England, 2004.

\bibitem[Mac73]{Machado:Convex}
Hilton~Vieira Machado, \emph{A characterization of convex subsets of normed
  spaces}, K\=odai Mathematical Seminar Reports \textbf{25} (1973), 307--320.

\bibitem[{Mey}91]{MeyerNieberg:BanachLattices}
Peter {Meyer-Nieberg}, \emph{Banach lattices}, Springer-Verlag, 1991.

\bibitem[Poi06]{Poitevin:PhD}
L.~Pedro Poitevin, \emph{Model theory of {Nakano} spaces}, Ph.D. thesis,
  University of Illinois at Urbana-Champaign, 2006.

\bibitem[PR]{Poitevin-Raynaud:PositiveContractiveProjections}
L.~Pedro Poitevin and Yves Raynaud, \emph{Ranges of positive contractive
  projections in {Nakano} spaces}, Indagationes Mathematicae, to appear.

\end{thebibliography}
\end{document}